\definecolor{refkey}{rgb}{0,0,1}
\definecolor{labelkey}{rgb}{0,0,1}
\numberwithin{equation}{section}
\newtheorem{theorem}{Theorem}[section]
\newtheorem{proposition}[theorem]{Proposition}
\newtheorem{lemma}[theorem]{Lemma}
\newtheorem{corollary}[theorem]{Corollary}
\newtheorem{Definition}[theorem]{Definition}
\newenvironment{definition}{\begin{Definition}\rm}{\end{Definition}}
\newtheorem{Remark}[theorem]{Remark}
\newenvironment{remark}{\begin{Remark}\rm}{\end{Remark}}
\newtheorem{Remarks}[theorem]{Remarks}
\newenvironment{remarks}{\begin{Remarks}\rm}{\end{Remarks}}
\newtheorem{Conjecture}[theorem]{Conjecture}
\newtheorem{Example}[theorem]{Example}
\newcommand{\B}{{\mathbb B}}
\newcommand{\C}{{\mathbb C}}
\newcommand{\D}{{\mathbb D}}
\newcommand{\R}{{\mathbb R}}
\newcommand{\gam}{\gamma}
\newcommand{\om}{\omega}
\newcommand{\p}{\partial}
\def \capa{{\rm cap}\,}
\def\det{\mathop{\mathrm{det}}\nolimits}
\def\supp{\mathop{\mathrm{supp}}\nolimits}
\renewcommand{\bar}{\overline}
\renewcommand{\tilde}{\widetilde}
\renewcommand{\hat}{\widehat}
\begin{document}
\title{Riesz equilibrium on a ball 
in the external field of a point charge}
\author{P.D. Dragnev, R. Orive, E.B. Saff and F. Wielonsky}

\maketitle

\begin{abstract}
We investigate the Riesz energy minimization problem on a $d$-dimensional ball in the presence of an external field created by a point charge above the ball in $\R^{d+1}$, $d\geq1$. Both cases of an attractive charge and a repulsive charge are considered. The notion of a signed equilibrium measure is one of the main tools in the present study.
For the case of a positive (repulsive) charge, the determination of the support of the equilibrium measure is a nontrivial question. We solve it in the one-dimensional case by making use of iterated balayage, a method already applied in logarithmic potential theory. Here we use a modified version of it, in order to handle the phenomenon of mass loss, characteristic of the Riesz balayage of positive measures. Moreover, we also consider minimization of Coulomb energy on the ball in dimension $d\geq2$, and of logarithmic energy on the segment in dimension 1. Different techniques are used for these two cases.
\end{abstract}
\section{Introduction}
The weighted Riesz equilibrium problem consists in minimizing the weighted energy 
\begin{equation}\label{def-I-U}
I_{Q}(\mu)=I(\mu)+2\int Q(x)d\mu(x),\quad I(\mu)=\iint\frac{d\mu(x)d\mu(t)}{|x-t|^{s}},
\qquad0<s<d,
\end{equation}
of a probability measure $\mu$ supported on a given closed set $\Sigma\subset\R^{d}$, of positive capacity, in the presence of an external field $Q(x)$ defined on $\Sigma$.
The potential of $\mu$, and the unweighted energy $I(\mu)$ in (\ref{def-I-U}) can be written as
$$
U^{\mu}(x)=\int\frac{d\mu(t)}{|x-t|^{s}},
\qquad I(\mu)=\int U^{\mu}(x)d\mu(x).
$$
It follows from the theory of Riesz potentials that, when $0<s<d$ and under suitable assumptions on $Q$, the equilibrium measure $\om_{Q,\Sigma}$ minimizing the energy $I_{Q}(\mu)$ exists and is unique.

In the present paper, $\Sigma$ is chosen to be the closure $\bar\B$ of the $d$-dimensional unit ball $\B\subset\R^{d}$, and the external field $Q$ is created by a charge $\gam$ at a position $y=(0,y_{d+1})\in\R^{d+1}$, $y_{d+1}\geq0$,  above $\B$,
\begin{equation}\label{ext-field}
Q(x)=\frac{\gamma}{|x-y|^{s}}=\frac{\gam}{(|x|^{2}+y_{d+1}^{2})^{s/2}}.
\end{equation}
For ease of notation, since we are only considering the ball of dimension $d$ and the sphere of dimension $d-1$ (except at one occurrence), we agree to denote the unit ball and the unit sphere respectively by $\B$ and $\mathbb{S}$, instead of $\B^{d}$ and $\mathbb{S}^{d-1}$.

Concerning the Riesz parameter $s$, we consider the Robin case 
\begin{equation}\label{s-value}
d-2<s<d,~\text{if }d\geq2,\quad\text{and}\quad0<s<1,~\text{if }d=1,
\end{equation}
also called the subharmonic case
since a potential $U^{\mu}$ is subharmonic outside of the support of $\mu$ when (\ref{s-value}) is satisfied. When $d\geq3$, we also study Coulomb interaction ($s=d-2$), and when $d=1,2$, we  consider the case $s=0$, that is logarithmic interaction. It will be convenient to introduce the parameter $0<\alpha<d$ such that 
$$\alpha=d-s.$$
There have been many investigations about Riesz equilibria on a sphere in $\R^{d}$, see e.g.\ \cite{BDS2009} and \cite{BDS14}, among others, but the case of a ball seems to be less studied.
One of the main difficulties in the present work is the fact that the external field $Q(x)$ in (\ref{ext-field}) is not a convex function of $|x|\in[0,\infty)$, though it is convex on $[y_{d+1}/\sqrt{s+1},\infty)$ when $\gamma>0$, and on $[0,y_{d+1}/\sqrt{s+1}]$ when $\gamma<0$. This prevents us from applying known results about the support of the equilibrium measure which hold in case of a convex external field, see e.g. \cite{ST} for the case of the logarithmic kernel, and \cite{B} for the case of Riesz kernel.

Riesz equilibrium problems on a ball in $\R^{d}$, in the presence of an external field $Q$, were previously studied in \cite{B}. In particular, for
the case of a positive charge placed at some height $h$ above the ball, it is shown that
if $h>h_{0}$ where $h_{0}$ is some specific number,
then the support of the equilibrium $\mu_{Q}$ is the full ball, and its density is computed, see \cite[Theorem II.6]{B}.
In the Coulomb case $d=3$, $s=1$, the critical height when the support starts to have an opening at the origin is computed, see \cite[Corollary II.8]{B}.
If $Q$ is a general radial field, and it is assumed that the support is a shell, a way to reconstruct the equilibrium on that shell is given, in terms of a complicated Fredholm equation, see \cite[Theorem II.9]{B}.
We also mention that the case of an attractor located above the whole space $\R^{d}$ was investigated in \cite{DOSW}.

Here, we aim at studying how the weighted equilibrium problem on $\bar\B$ evolves as the strength $\gamma$ of the charge moves from $-\infty$ to $+\infty$; that is, from a very attractive charge to a very repulsive charge. 
The paper is organized as follows.
In Section \ref{Prelim} we display some preliminary results useful for the sequel. In particular, we show that the balayage of a measure with compact support onto a disjoint compact set is absolutely continuous with respect to Lebesgue measure, with a real analytic density, a result which may be of independent interest (see Proposition \ref{bal-abs-cont}).
In Section \ref{Sec-Riesz} we first consider the case of Riesz interaction, with an attractive charge, whose study is essentially based on the approach developed in \cite{DOSW}. Then we study
the more delicate case of a repulsive charge, and we propose the {\it shell conjecture}, namely that the support of the equilibrium measure is either the entire ball $\bar\B$ or a shell with outer boundary the $(d-1)$-dimensional unit sphere $\mathbb{S}$.
In Section \ref{Coulomb}, we consider Coulomb interactions, i.e.\ the case $s=d-2$, including logarithmic interaction for the disk.
In Section \ref{log-seg}, we consider the problem for the segment with logarithmic interaction.
Finally, in Section \ref{shell}, which is the most technical part of the paper, we prove the above mentioned shell conjecture in the case of a segment.
For that, we adapt to the Riesz setting the method of {\it iterative balayage} that was used in \cite{KD}, see also \cite{DK}, for the study of a weighted  logarithmic equilibrium problem on an interval on the real line. See Table 1 for the locations of the special cases considered.
\begin{table}[H]
\begin{center}
\begin{tabular}{|c|c|}
\hline
\rule{0pt}{3ex}
$d\geq2$ & $d=1$ \\
\hline
\rule{0pt}{3ex} 
$d-2<s<d$ : Section \ref{Sec-Riesz} & $0<s<1$ : Sections \ref{Sec-Riesz} and \ref{shell}
\rule[-1.2ex]{0pt}{0pt} 
\\
\hline
\rule{0pt}{3ex}
$s=d-2$ : Section \ref{Coulomb} & $s=0$ : Section \ref{log-seg}
\rule[-1.2ex]{0pt}{0pt}\\
\hline
\end{tabular}
\caption{The different cases studied and the corresponding sections}
\end{center}
\end{table}
\section{Preliminaries}\label{Prelim}
Hereafter, all the positive measures are locally finite. For a signed measure $\nu$ in $\R^{d}$, we denote by $S_{\nu}$, or $\supp\nu$, its support. We denote its signed total mass by
$$m(\nu):=\nu(\R^{d})=\nu(\supp\nu),$$
or also $\|\nu\|$ when $\nu$ is a positive measure.

We denote by $\nu^{+}$ and $\nu^{-}$ the positive and negative parts in the Jordan decomposition of $\nu=\nu^{+}-\nu^{-}$. 
We also set $U^{\nu}=U^{\nu^{+}}-U^{\nu^{-}}$ and 
$$I(\nu)=I(\nu^{+})-2I(\nu^{+},\nu^{-})+I(\nu^{-}),\qquad I(\nu^{+},\nu^{-})
=\int\frac{d\nu^{+}(x)d\nu^{-}(t)}{|x-t|^{s}},
$$
where it is assumed that the right-hand side of the first equality is well-defined. 
We recall that a signed measure $\nu$ has finite energy if and only if both $\nu^+$ and $\nu^-$ have finite energy
(see \cite{Fu}, as well as \cite[Definition 4.2.4, p.134]{BHS} for more general kernels). 

We define an admissible external field $Q$ on a compact set $K$ as a lower semicontinuous function  $Q:K\to(-\infty,+\infty]$, such that the Riesz capacity of the set $\{z\in K,~Q(z)<\infty\}$ is positive.
The weighted equilibrium measure $\om_{Q,K}$ of $K$ and $Q$, supported on $K$, which minimizes the energy $I_{Q,K}$, exists and is uniquely characterized by the Frostman, or variational inequalities
\begin{align}
U^{\om_{Q,K}}(x)+Q(x) & \geq F_{Q},\quad\text{q.e. }x\in K, \label{Frostman1}
\\[5pt]
U^{\om_{Q,K}}(x)+Q(x) & \leq F_{Q},\quad x\in S_{Q,K}, \label{Frostman2}
\end{align}
where $F_{Q}$ is referred to as the equilibrium constant, and where we simply denote by $S_{Q,K}$ the support of $\om_{Q,K}$. Note that, integrating the equality $U^{\om_{Q,K}}(x)+Q(x)=F_{Q}$, q.e.\ on $S_{Q,K}$, with respect to its unweighted equilibrium measure $\om_{S_{Q,K}}$, we get that
\begin{equation}\label{F-value}
F_{Q}=\int U^{\om_{S_{Q,K}}}d\om_{Q,K}+\int Q(x)d\om_{S_{Q,K}}(x)=
I_{S_{Q,K}}+\int Q(x)d\om_{S_{Q,K}}(x),
\end{equation}
where we have used that $U^{\om_{S_{Q,K}}}=I_{S_{Q,K}}$ q.e.\ on $S_{Q,K}$.

We first state a basic result about the unweighted equilibrium measure $\om_{K}$ on a compact set $K$, recall that (\ref{s-value}) is assumed to hold throughout.
\begin{theorem}[{\cite[Theorems 1-4]{W}}]\label{Wal}
Let $K\subset\R^{d}$ be a compact set of positive capacity. 
\\[5pt]
i) The support of the equilibrium measure
$\om_{K}$ contains the interior $\accentset{\circ}{K}$ of $K$ (and thus also its closure).
\\[5pt]
ii) The restriction of $\om_{K}$ to the interior of $K$ is absolutely continuous with respect to the Lebesgue measure, and has a real analytic density, given by
\begin{equation}\label{dens-mes-eq}
\om_{K}'(x)=A(d,\alpha)\int\frac{F_{K}-U^{\om_{K}}(y)}{|x-y|^{d+\alpha}}dy,\qquad x\in\accentset{\circ}{K},
\end{equation}
where $F_{K}$ is the unweighted equilibrium constant of $K$, and $A(d,\alpha)$ is a constant dependent of $d$ and $\alpha$.
\\[5pt] 
iii) Let $x_{0}$ be a point on the boundary of $K$, and assume that in a neighborhood of $x_{0}$ in $\p K$, some inner and outer ball conditions are satisfied (see \cite[Theorem 4]{W} for details). Then the density $\om'_{K}(x)$ behaves like $M|x-x_{0}|^{-\alpha/2}$ as $x\to x_{0}$, where $M$ is some positive constant (depending on $x_{0}$).
\end{theorem}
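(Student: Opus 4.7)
The plan is to prove the three parts in sequence, with (ii) providing the technical core. The key structural fact, used throughout, is that under (\ref{s-value}) the Riesz kernel is strictly subharmonic on $\R^d \setminus \{0\}$, with
\begin{equation*}
\Delta|x|^{-s}=s(s+2-d)|x|^{-s-2}>0,
\end{equation*}
so every potential $U^{\omega_K}$ is a real-analytic, strictly subharmonic function on the open set $\R^d\setminus S_K$. (For $d=1$, $\Delta$ is just the second derivative and subharmonicity means convexity.)

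For (i), I would argue by contradiction: suppose there is an open ball $B\subset\accentset{\circ}{K}\setminus S_K$. Frostman's inequality (\ref{Frostman1}) gives $U^{\omega_K}\geq F_K$ on $B$, pointwise because $U^{\omega_K}$ is real-analytic there. Extending (\ref{Frostman2}) via the maximum principle applied to the subharmonic function $U^{\omega_K}-F_K$ on each component of $\R^d\setminus S_K$ (using $U^{\omega_K}\to 0$ at infinity and the Frostman upper bound on $S_K$) yields $U^{\omega_K}\leq F_K$ off $S_K$. Hence $U^{\omega_K}\equiv F_K$ on $B$, so $\Delta U^{\omega_K}\equiv 0$ on $B$. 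But differentiating under the integral sign (legitimate since $\dist(B,S_K)>0$) gives
\begin{equation*}
\Delta U^{\omega_K}(x)=s(s+2-d)\int|x-t|^{-s-2}d\omega_K(t)>0,\qquad x\in B,
\end{equation*}
a contradiction.

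For (ii), I would invert the Riesz potential via its hypersingular representation. Since $s=d-\alpha$ with $\alpha\in(0,2)$, the potential $U^{\omega_K}$ equals, up to a constant, the Riesz potential $I_\alpha\omega_K$, whose left inverse is the fractional Laplacian $(-\Delta)^{\alpha/2}$; for $\alpha\in(0,2)$ this operator admits the hypersingular realization
\begin{equation*}
(-\Delta)^{\alpha/2}f(x)=c\int\frac{f(x)-f(y)}{|x-y|^{d+\alpha}}\,dy.
\end{equation*}
For $x\in\accentset{\circ}{K}$, part (i) gives $U^{\omega_K}(x)=F_K$, and then the identity $(-\Delta)^{\alpha/2}U^{\omega_K}=c'\omega_K$ yields exactly (\ref{dens-mes-eq}) after absorbing constants into $A(d,\alpha)$. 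Real-analyticity of $\omega_K'$ on $\accentset{\circ}{K}$ then follows by differentiating under the integral sign: since $F_K-U^{\omega_K}(y)$ vanishes on $\accentset{\circ}{K}$, the effective integration is over $\R^d\setminus\accentset{\circ}{K}$, and on any compact subset of $\accentset{\circ}{K}$ the kernel $|x-y|^{-(d+\alpha)}$ is real-analytic in $x$.

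For (iii), I would extract the boundary asymptotics of (\ref{dens-mes-eq}) by scaling. Under the inner/outer ball conditions at $x_0\in\partial K$, the classical exterior boundary regularity of Riesz equilibrium potentials yields $F_K-U^{\omega_K}(y)\sim c\,\dist(y,K)^{\alpha/2}$ as $y\to x_0$ from outside $K$; rescaling $y=x_0+|x-x_0|z$ in the integral against a half-space model then produces the announced $|x-x_0|^{-\alpha/2}$ singularity. The main obstacle I foresee is the rigorous justification of the hypersingular inversion in (ii) for a measure on a compact set with possibly irregular boundary: one must regularize the principal-value integral and interchange limits, which in turn requires quantitative control on $U^{\omega_K}$ near $\partial K$ — essentially the information that (iii) will ultimately extract from it — so some care must be taken to avoid circularity, perhaps by first carrying out the argument on compact exhaustions of $\accentset{\circ}{K}$.
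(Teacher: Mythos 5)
This theorem is not proved in the paper at all: it is imported verbatim from Wallin \cite[Theorems 1--4]{W}, so there is no in-paper argument to compare yours against. Your overall strategy (subharmonicity of the kernel and a maximum principle for (i), inversion of the Riesz potential by the hypersingular fractional Laplacian for (ii), boundary scaling in \eqref{dens-mes-eq} for (iii)) is the natural route and is in the spirit of Wallin's original analysis, and your consistency checks (the integral in \eqref{dens-mes-eq} converges absolutely for $x\in\accentset{\circ}{K}$ because the numerator vanishes near $x$, and is nonnegative because $U^{\om_{K}}\leq F_{K}$) are correct. However, as a proof the proposal has genuine gaps beyond the one you flag. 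In (i), the ``maximum principle applied to the subharmonic function $U^{\om_{K}}-F_{K}$'' is not the classical boundary maximum principle: $U^{\om_{K}}$ is only lower semicontinuous, so the needed bound $\limsup_{x\to\zeta}U^{\om_{K}}(x)\leq F_{K}$ at boundary points $\zeta\in S_{K}$ of a component of $\R^{d}\setminus S_{K}$ does not follow from $U^{\om_{K}}\leq F_{K}$ \emph{on} $S_{K}$; this is exactly Frostman's (first) maximum principle, a nontrivial theorem valid precisely for $d-2\leq s<d$, which must be invoked or proved. Similarly, in (ii) the assertion ``part (i) gives $U^{\om_{K}}(x)=F_{K}$ for $x\in\accentset{\circ}{K}$'' is not what (i) gives: Frostman's lower inequality holds only q.e., and upgrading it to every interior point requires knowing that interior points are regular points of $K$ (or an equivalent fine-continuity argument); lower semicontinuity alone goes the wrong way.

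The two remaining gaps are the substantive ones. For (ii), the pointwise identity $(-\Delta)^{\alpha/2}U^{\om_{K}}=c\,\om_{K}$ localized on $\accentset{\circ}{K}$ is precisely where the work of Wallin's theorem lies; it needs a regularization/mollification argument (or the composition identities for Riesz kernels) together with enough integrability of $F_{K}-U^{\om_{K}}$, and your own worry about circularity is well founded, so ``absorbing constants into $A(d,\alpha)$'' cannot stand in for that justification. For (iii), the input you call classical, namely $F_{K}-U^{\om_{K}}(y)\sim c\,\dist(y,K)^{\alpha/2}$ as $y\to x_{0}$ from outside under the inner/outer ball conditions, is essentially the content of Wallin's Theorem 4 itself; it is not available off the shelf independently of the statement being proved, and must be established, e.g., by barrier/comparison arguments built from the explicit ball density \eqref{equilball} and Kelvin transforms. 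As written, part (iii) assumes the hard estimate and only performs the (correct, but easy) rescaling of \eqref{dens-mes-eq}; until that boundary decay is proved, the argument is incomplete.
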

Explicit expression for the density with respect to Lebesgue measure of the equilibrium measure of the closed ball $B_R$ of radius $R$ in $\R^{d}$, simply denoted $\omega_R$, can be found in \cite[p.\ 163]{Land}, \cite[Eq.(4.6.12)]{BHS}, namely
\begin{equation}\label{equilball}
\omega_R' (x) =  \frac{c_{R}}{(R^2 - |x|^2)^{\alpha/2}},\qquad
c_{R}= \frac{\pi^{-d/2}\Gamma (1+s/2)}{R^{s}\Gamma (1-\alpha/2)},
\end{equation}
Let us now recall the notion of {\em balayage} of a measure when $d-2 \leq s <d$, see \cite[Chapter IV, \S 5, p.264]{Land}.
Given
a compact set $K\subset \R^d$ of positive capacity and a {positive} measure $\sigma$ \textcolor{black}{of finite total mass}, 
there exists a unique {positive} measure
$$\widehat{\sigma}:=
Bal(\sigma,K)$$
called the Riesz $s$-balayage of $\sigma$ onto $K$ satisfying the following: \\
1) $S_{\widehat{\sigma}} \subseteq K$, \\
2) $\widehat{\sigma}$ is zero on the set of irregular points of \textcolor{black}{the complement of $K$}, and
\begin{equation}\label{balayage}
U^{\widehat{\sigma}}(x) = U^{\sigma}(x)\text{ q.e.\ on }K,\qquad U^{\widehat{\sigma}}(x) \leq U^{\sigma}(x)\text{ on }\R^d.
\end{equation}
\begin{remark}\label{Rem-mass-loss}
The mass of $\hat\sigma$ satisfies $\|\hat\sigma\|\leq\|\sigma\|$, i.e.\ the balayage may induce a mass loss 
and the energy of $\hat\sigma$ satisfies $I(\hat\sigma)\leq I(\sigma)$. Actually, the following formula holds, see \cite[p.264]{Land},
\begin{equation}\label{mass-bal}
\|\hat\sigma\|=\capa(K)\int U^{\sigma}d\om_{K}=\capa(K)\int U^{\om_{K}}d\sigma.
\end{equation}
\end{remark}
It may also be useful to note that, for two compact sets $K_{1}\subset K_{2}$, one has
$$
Bal(Bal(\sigma,K_{2}),K_{1})=Bal(\sigma,K_{1}).
$$
For a signed measure $\nu=\nu^{+}-\nu^{-}$, we define the balayage of $\nu$ on a compact set $K$ by $$\hat\nu=\hat{\nu^{+}}-\hat{\nu^{-}}.$$
Let us now state a few known results about the balayage of point masses.
\begin{proposition}[{\cite[p.121-122]{Land}}] The balayages of a point mass $\delta_{t}$, $t\in\R^{d}$,  onto the outside (for $|t|<r$), and inside (for $|t|>r$) of the ball $B_{r}\subset\R^{d}$ , are respectively given, up to a multiplicative constant, by
\begin{equation}\label{Bal-in-out}
\hat\delta_{t}(x)=\left(\frac{r^{2}-|t|^{2}}{|x|^{2}-r^{2}}\right)^{\alpha/2}
\frac{dx}{|t-x|^{d}},\qquad
\hat\delta_{t}(x)=\left(\frac{|t|^{2}-r^{2}}{r^{2}-|x|^{2}}\right)^{\alpha/2}
\frac{dx}{|t-x|^{d}}.
\end{equation}
\end{proposition}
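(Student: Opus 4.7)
The plan is to verify both formulas by checking that the proposed measures satisfy the two characterizing properties of the $s$-balayage of $\delta_t$: (i) $U^{\widehat\delta_t}(x)=|x-t|^{-s}$ quasi-everywhere on the target set, and (ii) $U^{\widehat\delta_t}(x)\leq |x-t|^{-s}$ everywhere in $\R^d$. By rotational symmetry I may place $t$ on a single coordinate axis. The two cases are exchanged by the Kelvin inversion $T\colon x\mapsto x^{\ast}=r^2x/|x|^2$ in the sphere of radius $r$, which swaps the interior and exterior of $B_r$, fixes $\partial B_r$ pointwise, and satisfies $|x^{\ast}-y^{\ast}|=r^2|x-y|/(|x||y|)$ together with $dx=(r/|x^{\ast}|)^{2d}dx^{\ast}$. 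Tracking the conformal weight $(|x|/r)^s$ that $T$ produces on Riesz kernels, one verifies that $T$ intertwines the two balayage problems, so it is enough to prove one of the two formulas.

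Focusing on the case $|t|>r$ with balayage onto $\overline{B_r}$, I would reduce property (i) at a point $z\in\partial B_r$ to the core integral identity
\[
\int_{|x|<r}\frac{dx}{|z-x|^s\,(r^2-|x|^2)^{\alpha/2}\,|t-x|^d}
\;=\;\frac{c(d,\alpha)}{(|t|^2-r^2)^{\alpha/2}\,|z-t|^s},
\]
whose constant $c(d,\alpha)$ simultaneously pins down the multiplicative normalization omitted in the statement. This is a Riesz composition integral of classical form: applying the Kelvin inversion $x\mapsto x^{\ast}$ converts the integral over the ball into one over its exterior, after which writing $|z-x^{\ast}|^{-s}$ via its Beta-function representation and interchanging orders of integration reduces the computation to a product of one-dimensional Beta integrals. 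This is essentially the computation originally carried out in \cite[Ch.~IV, \S\,5]{Land}.

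Once the sphere identity is established, the rest is routine. The function $w(x):=U^{\widehat\delta_t}(x)-|x-t|^{-s}$ is continuous away from $t$, vanishes on $\partial B_r$, and decays to zero at infinity (since $\alpha<d$). On each connected component of $\R^d\setminus(\partial B_r\cup\{t\})$ the function $w$ is the potential of a compactly supported signed measure, with appropriate sign of $s$-superharmonicity. The domination/maximum principle for Riesz potentials, valid throughout the Robin range $d-2\leq s<d$, then forces $w\equiv 0$ on $\overline{B_r}$, which gives (i) quasi-everywhere on the target, and $w\leq 0$ on $\R^d\setminus\overline{B_r}$, which gives (ii). The main obstacle in the whole argument is the explicit evaluation of the core integral above: although the Kelvin inversion and the symmetry reductions put it into a canonical Beta-type form, the precise tracking of the exponents $s$, $\alpha$, $d$ is the one computational step that no general principle supplies.
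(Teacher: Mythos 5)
The paper itself offers no proof of this proposition --- it is quoted directly from Landkof --- so your sketch must stand on its own, and as written it has a genuine gap at its central step. You establish (or rather, defer to Landkof) the potential identity only at points $z\in\partial B_r$, and then try to propagate it to the whole solid ball by a ``maximum/domination principle on each connected component of $\R^d\setminus(\partial B_r\cup\{t\})$''. This component-wise reasoning is a Newtonian ($s=d-2$) habit and fails in the range $d-2<s<d$: the relevant notion is $\alpha$-harmonicity for the fractional Laplacian, which is nonlocal, so the minimum principle for the difference $w=U^{\widehat\delta_t}-U^{\delta_t}$ on $B_r$ requires control of $w$ on the \emph{entire complement} of $B_r$, not just on the topological boundary $\partial B_r$ --- and the sign of $w$ outside the ball is precisely what you are trying to prove, so the argument is circular. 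A concrete counterexample to the inference you use: the balayage of $\delta_t$ onto the sphere $\partial B_r$ is a measure supported in $\overline{B_r}$ whose potential equals $U^{\delta_t}$ q.e.\ on $\partial B_r$, is $\leq U^{\delta_t}$ everywhere and decays at infinity, yet it does \emph{not} equal $U^{\delta_t}$ inside $B_r$ (otherwise it would coincide with the ball-balayage, which has strictly positive density throughout $B_r$). Hence ``$w=0$ on $\partial B_r$, $w\to0$ at infinity, correct sign of $\alpha$-superharmonicity'' does not force $w\equiv0$ on $\overline{B_r}$. Recall also that for $d-2<s<d$ the swept measure charges the whole ball, and property \eqref{balayage} demands equality of potentials q.e.\ on all of the target set; so the identity must be verified at every point of the solid ball (or exterior), which is exactly the computation you have not carried out.

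Two ways to repair the argument. Either evaluate the core integral for $z$ ranging over the whole ball (this is what Landkof does; for interior $z$ it is genuinely harder than the boundary case, and it is the irreducible computational content you acknowledge but skip), after which the global inequality in \eqref{balayage} does follow correctly from the domination principle for finite-energy measures in the Robin range, so that part of your plan is fine once equality on the full target is in hand. Or avoid the integral altogether by the route the paper itself uses elsewhere: by Lemma \ref{Bal-eq}, $Bal(\delta_t,K)$ is obtained from the unweighted equilibrium measure of the Kelvin image $K_t(K)$, and for $K$ a ball (or a ball complement) the image is again a ball, whose equilibrium density is the explicit formula \eqref{equilball}; transporting that density back through the Kelvin map, using $|x-a||t-a||K_a(x)-K_a(t)|=|x-t|$ and the Jacobian $|t-a|^{-2d}$, yields \eqref{Bal-in-out} directly, constants included. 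Your Kelvin-inversion reduction of one case to the other is sound in spirit, but as it stands the proposal neither proves the key identity nor validly propagates it off the sphere.
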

In the next lemma, we consider the Kelvin transform $K_{a}:\R^{d}\to\R^{d}$, with center $a\in\R^{d}$ and radius $1$. Recall it is defined by the relation
$$
K_{a}(x)-a=\frac{x-a}{|x-a|^{2}},\qquad x\in\R^{d}.
$$
The Kelvin transform is involutive, its Jacobian $J_{a}$ satisfies
$$|\det J_{a}(t)|=|t-a|^{-2d}.$$
There is a known connection between balayages and equilibrium measures, see \cite[Chapter IV, p.263]{Land}.
\begin{lemma}(Balayage and equilibrium measure) \label{Bal-eq}
Let $K$ be a compact subset of $\R^{d}$, and let
$K_{a}$ be the Kelvin transform with center $a\notin K$ and radius $1$. Then, 
\begin{equation}\label{Bal=eq}
Bal(\delta_{a},K)(t)=F_{K_{a}(K)}^{-1}|t-a|^{s}d\om_{K_{a}(K)}(K_{a}(t)),\qquad t\in K,
\end{equation}
where $F_{K_{a}(K)}$ denotes the unweighted equilibrium constant for the compact set $K_{a}(K)$.
\end{lemma}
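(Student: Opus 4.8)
The plan is to exploit the conformal invariance of Riesz kernels under Kelvin transforms, which converts the balayage of the Dirac mass $\delta_a$ onto $K$ into the (constant-potential) unweighted equilibrium problem on the image $K_a(K)$. First I would recall the fundamental transformation law: if $\nu$ is a measure and $\nu^*$ its image under $K_a$ weighted by the Jacobian factor, i.e.\ $d\nu^*(K_a(x)) = |x-a|^{-s}\,d\nu(x)$ (the ``Kelvin transform of a measure''), then the Riesz potentials satisfy
\begin{equation}\label{kelvin-potential}
U^{\nu^*}(K_a(x)) = |x-a|^{s}\,U^{\nu}(x),
\end{equation}
which follows from the elementary identity $|K_a(x)-K_a(y)| = |x-y|/(|x-a|\,|y-a|)$ together with the definition of $\nu^*$. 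Since $a\notin K$, the factor $|x-a|^{s}$ is bounded above and below on $K$, so finiteness of energies and the q.e.\ exceptional sets are preserved under the correspondence $\nu\leftrightarrow\nu^*$.

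Next I would apply \eqref{kelvin-potential} to the characterizing properties \eqref{balayage} of $\widehat{\delta_a} = Bal(\delta_a,K)$. On $K$ we have $U^{\widehat{\delta_a}}(x) = U^{\delta_a}(x) = |x-a|^{-s}$ quasi-everywhere; transporting via the Kelvin map, the measure $\mu := (\widehat{\delta_a})^*$ is supported on $K_a(K)$ and satisfies $U^{\mu}(\xi) = |x-a|^{s}\cdot|x-a|^{-s} = 1$ q.e.\ on $K_a(K)$ (writing $\xi = K_a(x)$), while the global inequality $U^{\widehat{\delta_a}}\le U^{\delta_a}$ transforms into $U^{\mu}\le 1$ everywhere. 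Thus $\mu$ has constant potential (equal to $1$) q.e.\ on its support $K_a(K)$ and potential at most $1$ off it; together with the vanishing-on-irregular-points condition this is precisely the Frostman characterization of the unweighted equilibrium measure of $K_a(K)$, up to normalization. Since the unweighted equilibrium measure $\om_{K_a(K)}$ has potential equal to the equilibrium constant $F_{K_a(K)}$ on its support, uniqueness forces $\mu = F_{K_a(K)}^{-1}\,\om_{K_a(K)}$.

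Finally I would unwind the definition of the Kelvin transform of a measure to read off the density. From $d\mu(K_a(x)) = |x-a|^{-s}\,d\widehat{\delta_a}(x)$ and $\mu = F_{K_a(K)}^{-1}\om_{K_a(K)}$ we get $d\widehat{\delta_a}(x) = |x-a|^{s}\,d\mu(K_a(x)) = F_{K_a(K)}^{-1}\,|x-a|^{s}\,d\om_{K_a(K)}(K_a(x))$, which is \eqref{Bal=eq} after renaming $x$ as $t$. The main obstacle is not any single computation but rather being careful about the two potential technical points: (i) verifying that \eqref{kelvin-potential} and the measure correspondence genuinely respect the q.e.\ exceptional sets and the ``zero on irregular points'' condition (so that the transformed object really is \emph{the} equilibrium measure and not merely a measure with the right potential a.e.), and (ii) checking that $K_a(K)$ has positive capacity and that the involutivity of $K_a$ is used correctly when transporting back. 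Both are standard given $a\notin K$ and compactness, and are spelled out in \cite[Chapter IV]{Land}; I would cite that reference for the transformation law and focus the written proof on the Frostman-characterization step.
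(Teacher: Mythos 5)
Your proof is correct and rests on the same Kelvin-transform identity $|K_a(x)-K_a(t)|=|x-t|/(|x-a|\,|t-a|)$ that the paper's proof uses. The paper runs the computation in the opposite (and shorter) direction: it directly defines $d\mu_K(t)=|t-a|^{s}\,d\om_{K_a(K)}(K_a(t))$, computes $U^{\mu_K}(x)=F_{K_a(K)}|x-a|^{-s}=F_{K_a(K)}U^{\delta_a}(x)$ for $x\in K$ via that identity, and concludes by the uniqueness of balayage, whereas you transport $Bal(\delta_a,K)$ to $K_a(K)$ and identify the image through Frostman's characterization of the equilibrium measure --- same mechanism read backward, with a bit more explicit bookkeeping of the q.e.\ and uniqueness steps.
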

For completeness, we give a proof.
\begin{proof}
The equilibrium measure $\om_{K_{a}(K)}$ satisfies, 
$$
U^{\om_{K_{a}(K)}}(x)=F_{K_{a}(K)},\qquad x\in K_{a}(K).
$$
Let $\mu_{K}$ be the measure on $K$ defined by $d\mu_{K}(t)=|t-a|^{s}d\om_{K_{a}(K)}(K_{a}(t))$. Then, for $x\in K$,
\begin{align*}
U^{\mu_{K}}(x) & =\int_{K}\frac{|t-a|^{s}}{|x-t|^{s}}d\om_{K_{a}(K)}(K_{a}(t))
=|x-a|^{-s}U^{\om_{K_{a}(K)}}(K_{a}(x))
\\[5pt]
& =F_{K_{a}(K)} {|x-a|^{-s}}
=F_{K_{a}(K)}U^{\delta_{a}}(x),
\end{align*}
where, in the second equality, we use the fact that
\begin{equation}\label{Kel2}
|x-a||t-a||K_{a}(x)-K_{a}(t)|=|x-t|.
\end{equation}
The equality between potentials shows the desired result.
\end{proof}
\begin{lemma}[Superposition principle, {\cite[p.264]{Land}}]
For a signed measure $\mu$ and a nonnegative measurable function $f$ on K, the following formula holds:
\begin{equation}\label{super}
\int_{K} fd\hat\mu=\int_{S_{\mu}}\left(\int_{K} fd\hat\delta_{t}\right)d\mu(t).
\end{equation}
\end{lemma}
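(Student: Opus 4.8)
The plan is to deduce (\ref{super}) from the measure-level identity
\[
\widehat{\mu}=\int_{S_\mu}\widehat{\delta_t}\,d\mu(t)
\]
by integrating $f$ against both sides. It suffices to prove this for a positive measure $\mu$ (for a signed $\mu$ one splits $\mu=\mu^+-\mu^-$ and uses $\widehat{\mu}=\widehat{\mu^+}-\widehat{\mu^-}$ together with the linearity of both sides of (\ref{super}) in $\mu$). So assume $\mu\geq0$ and introduce the Borel measure $\nu$ defined by $\nu(A):=\int_{S_\mu}\widehat{\delta_t}(A)\,d\mu(t)$ for Borel sets $A\subset\R^d$. Here $t\mapsto\widehat{\delta_t}(A)$ is measurable because $t\mapsto\widehat{\delta_t}$ is Borel measurable (e.g.\ it is weak-$*$ continuous on $\R^d\setminus K$), and $\nu$ has finite total mass $\|\nu\|\leq\|\mu\|$ by the mass-loss bound $\|\widehat{\delta_t}\|\leq\|\delta_t\|=1$ of Remark~\ref{Rem-mass-loss}; thus $\nu$ is an admissible candidate for a balayage measure. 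The goal is then to check that $\nu=\widehat{\mu}=Bal(\mu,K)$ by verifying the defining properties of the balayage.

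The condition $S_\nu\subseteq K$ is immediate since every $\widehat{\delta_t}$ is supported in $K$, and $\nu$ vanishes on the polar set $e$ of points of $K$ that are irregular for $\R^d\setminus K$ since every $\widehat{\delta_t}$ does. For the potential relations (\ref{balayage}), Tonelli's theorem (all integrands are nonnegative) gives
\[
U^{\nu}(x)=\int_{S_\mu}U^{\widehat{\delta_t}}(x)\,d\mu(t),\qquad x\in\R^d.
\]
Since $U^{\widehat{\delta_t}}\leq U^{\delta_t}$ everywhere on $\R^d$, this yields $U^{\nu}\leq U^{\mu}$ on $\R^d$; and since $U^{\widehat{\delta_t}}=U^{\delta_t}$ on $K\setminus e$ — the exceptional set in this q.e.\ identity being contained in the fixed polar set $e$ — we get $U^{\nu}=U^{\mu}$ on $K\setminus e$, i.e.\ q.e.\ on $K$. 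By the uniqueness of the balayage measure, $\nu=\widehat{\mu}$. Applying once more Tonelli's theorem, for any nonnegative measurable $f$ on $K$,
\[
\int_K f\,d\widehat{\mu}=\int_K f\,d\nu=\int_{S_\mu}\Big(\int_K f\,d\widehat{\delta_t}\Big)\,d\mu(t),
\]
which is (\ref{super}).

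The one point requiring care — and the main obstacle — is the uniformity in $t$ of the exceptional set in the identity $U^{\widehat{\delta_t}}=U^{\delta_t}$ on $K$: one must know that this identity holds at every point of $K$ that is regular for $\R^d\setminus K$, so that the q.e.\ statement survives integration over $t$ against $\mu$ (which need not have finite energy, hence $\mu$ may charge polar sets). This, the Borel measurability of $t\mapsto\widehat{\delta_t}$, and the fact that the full list of defining properties of the balayage recalled after (\ref{balayage}) characterizes the balayage of an arbitrary positive measure of finite total mass (for finite-energy measures, equality of potentials q.e.\ alone would suffice) are the potential-theoretic inputs, all available from the theory recalled above and from \cite[Chapter IV]{Land}.
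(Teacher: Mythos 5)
The paper states this lemma as a cited result from Landkof and gives no proof of its own, so there is no in-paper argument to compare against. Your proposal is the natural proof, and it is essentially correct. You reduce to positive $\mu$ by linearity, define $\nu := \int_{S_\mu}\widehat{\delta_t}\,d\mu(t)$, check the four defining properties of $\mathrm{Bal}(\mu,K)$ (support in $K$, vanishing on the irregular points of $\R^d\setminus K$, $U^\nu=U^\mu$ q.e.\ on $K$, $U^\nu\leq U^\mu$ on $\R^d$) by pushing the corresponding properties of each $\widehat{\delta_t}$ through Tonelli, and invoke uniqueness. The genuinely delicate point, which you correctly isolate, is that since $\mu$ need not have finite energy it may charge polar sets, so the q.e.\ identity $U^{\widehat{\delta_t}}=U^{\delta_t}$ on $K$ must hold off a single polar set independent of $t$; this is indeed the case, as the exceptional set is precisely the (fixed) set of irregular points of $\R^d\setminus K$ (Landkof, Chapter IV). The remaining ingredients you appeal to — Borel measurability of $t\mapsto\widehat{\delta_t}$ (hence joint measurability of $(x,t)\mapsto U^{\widehat{\delta_t}}(x)$ needed for Tonelli), and the fact that the stated properties characterize the balayage of an arbitrary positive measure of finite total mass, not only of finite-energy ones — are standard and available from \cite[Chapter~IV]{Land}. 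One small caveat you might make explicit: for $t$ ranging over irregular points of $K$ itself the map $t\mapsto\widehat{\delta_t}$ may fail to be continuous, but measurability suffices and the irregular set is polar; and in the signed case the two iterated integrals $\int(\int f\,d\widehat{\delta_t})\,d\mu^{\pm}(t)$ must not both be $+\infty$, which is implicit in the statement that the formula ``holds.''
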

From Theorem \ref{Wal} and the two preceeding lemmas, we obtain (assuming as above $d\geq1$ and $\max(0,d-2)<s<d$)
\begin{proposition}\label{bal-abs-cont}
Let $K$ be a compact subset of $\R^{d}$ and $a\notin K$. Then
\\[5pt]
1) The restriction of $Bal(\delta_{a},K)$ to the interior of $K$ is absolutely continuous with respect to the Lebesgue measure. Its density is a real analytic function on $\accentset{\circ}{K}$. 
\\[5pt]
2) Let $\mu$ be a positive measure whose support is compact and disjoint from $K$. The restriction of the balayage $Bal(\mu,K)$ to the interior of $K$ is absolutely continuous, and has a real analytic density.
\\[5pt]
3) The behavior of the density of $Bal(\delta_{a},K)$ near a boundary point $x_{0}$ of $K$ which satisfies the geometric assumptions stated in Theorem \ref{Wal} (iii), behaves like $M|x-x_{0}|^{-\alpha/2}$ as $x\to x_{0}$, where $M$ is some positive constant. 
\end{proposition}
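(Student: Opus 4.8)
The plan is to derive all three assertions from the explicit formula \eqref{Bal=eq} in Lemma~\ref{Bal-eq}, which represents $Bal(\delta_a,K)$ as a constant multiple of $|t-a|^s\, d\om_{K_a(K)}(K_a(t))$, together with the regularity statements for unweighted equilibrium measures in Theorem~\ref{Wal} and the superposition principle \eqref{super}. First I would observe that since $a\notin K$ and $K$ is compact, $\inf_{t\in K}|t-a|>0$, so $t\mapsto|t-a|^s$ is a positive real analytic function on a neighborhood of $K$; moreover the Kelvin transform $K_a$ is a real analytic diffeomorphism of $\R^d\setminus\{a\}$ onto its image, mapping $\accentset{\circ}{K}$ diffeomorphically onto $\accentset{\circ}{K_a(K)}$, with real analytic Jacobian $|\det J_a(t)|=|t-a|^{-2d}$.

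For part 1), by Theorem~\ref{Wal}(ii) the restriction of $\om_{K_a(K)}$ to $\accentset{\circ}{K_a(K)}$ is absolutely continuous with real analytic density, given by \eqref{dens-mes-eq}. Pulling this back under the real analytic change of variables $x=K_a(t)$ shows that the restriction of the pushforward measure $K_a^{*}\om_{K_a(K)}$ to $\accentset{\circ}{K}$ is absolutely continuous with density $t\mapsto \om_{K_a(K)}'(K_a(t))\,|\det J_a(t)|$, which is a product of real analytic functions, hence real analytic on $\accentset{\circ}{K}$. Multiplying by the real analytic weight $|t-a|^s$ and the constant $F_{K_a(K)}^{-1}$, and invoking \eqref{Bal=eq}, yields that $Bal(\delta_a,K)$ restricted to $\accentset{\circ}{K}$ is absolutely continuous with real analytic density. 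For part 3), the same change of variables is applied near a boundary point $x_0$ satisfying the geometric hypotheses of Theorem~\ref{Wal}(iii): since $K_a$ is a diffeomorphism carrying inner/outer ball conditions at $x_0$ to inner/outer ball conditions at $K_a(x_0)$, the density of $\om_{K_a(K)}$ near $K_a(x_0)$ behaves like $M'|y-K_a(x_0)|^{-\alpha/2}$; pulling back, and using that $|K_a(t)-K_a(x_0)|$ is comparable to $|t-x_0|$ as $t\to x_0$ (by \eqref{Kel2} and the boundedness of $|t-a|,|x_0-a|$ away from $0$ and $\infty$), together with the fact that $|t-a|^s$ and the Jacobian are continuous and nonvanishing at $x_0$, gives the asserted $M|x-x_0|^{-\alpha/2}$ behavior.

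For part 2), I would write $Bal(\mu,K)=\int Bal(\delta_t,K)\,d\mu(t)$ and use the superposition principle \eqref{super}: for a test function $f$ supported in $\accentset{\circ}{K}$, $\int f\,d\,\widehat{\mu}=\int_{S_\mu}\bigl(\int f\,d\,\widehat{\delta_t}\bigr)d\mu(t)$. By part 1) applied with $a=t$ (legitimate since $S_\mu$ is compact and disjoint from $K$, so every $t\in S_\mu$ lies off $K$), the inner integral equals $\int f(x)\,b(x,t)\,dx$ where $b(\cdot,t)$ is the real analytic density of $Bal(\delta_t,K)$ on $\accentset{\circ}{K}$. Fubini then identifies the density of $Bal(\mu,K)$ on $\accentset{\circ}{K}$ as $x\mapsto \int_{S_\mu} b(x,t)\,d\mu(t)$, proving absolute continuity. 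To upgrade to real analyticity I would show that $b(x,t)$ is jointly real analytic in $(x,t)$ on $\accentset{\circ}{K}\times(\R^d\setminus K)$ — which follows from the explicit composite formula above, since each ingredient ($|t-a|^s$, the Kelvin transform, the equilibrium density \eqref{dens-mes-eq}, the Jacobian) depends real analytically on the base point — and then use that a uniform-on-compacta family of real analytic functions, integrated against a finite measure supported in a compact subset of the region of analyticity, produces a real analytic function; concretely one bounds the coefficients of the local power-series expansions of $b(\cdot,t)$ uniformly for $t\in S_\mu$ (Cauchy estimates after complexifying the $x$-variable) and differentiates under the integral sign.

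The main obstacle I anticipate is part 2), specifically the passage from mere absolute continuity to real analyticity of the density $\int_{S_\mu} b(x,t)\,d\mu(t)$: one must produce, uniformly in $t$ ranging over the compact set $S_\mu$, a complex-analytic extension of $x\mapsto b(x,t)$ to a fixed complex neighborhood of each point of $\accentset{\circ}{K}$, with locally uniform bounds, so that the integral converges to an analytic function. Establishing this uniformity requires care with the representation \eqref{dens-mes-eq} — one needs the equilibrium potential $U^{\om_{K_t(K)}}$ and constant $F_{K_t(K)}$, as functions of the parameter $t$, to vary in a controlled (indeed analytic) fashion — but since $K_t(K)$ depends smoothly on $t$ and stays within a bounded region, and $\dist(S_\mu,K)>0$ gives uniform control on the Kelvin transforms, these bounds are available and the argument goes through.
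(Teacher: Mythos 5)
Your parts 1) and 3) follow the same route as the paper: express $Bal(\delta_a,K)$ via Lemma \ref{Bal-eq}, pull back $\om_{K_a(K)}$ under the Kelvin transform, and observe the resulting density is a product of real analytic factors. No issues there.

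Part 2), however, takes a genuinely different and, as written, incomplete route. You want to show that $b(x,t)$, the density of $Bal(\delta_t,K)$ at $x\in\accentset{\circ}{K}$, extends holomorphically in $x$ to a fixed complex neighborhood with bounds uniform over $t\in S_\mu$ (and you even assert joint real analyticity in $(x,t)$). But from the formula
$$
b(x,t)=F_{K_t(K)}^{-1}\,|x-t|^{s-2d}\,\om'_{K_t(K)}(K_t(x)),
$$
this would require controlling, as $t$ varies, the equilibrium constant $F_{K_t(K)}$ and the equilibrium density $\om'_{K_t(K)}$ of the \emph{varying} compact set $K_t(K)$, and in particular the joint-analyticity claim would require real analyticity of these objects in $t$. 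That is not established anywhere (nor in Theorem \ref{Wal}), and it is not at all obvious that the Robin constant or the equilibrium density of a family of compact sets depends analytically on the parameter; even the needed uniform Cauchy estimates are not produced, only asserted (``these bounds are available and the argument goes through''). This is the gap.

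The paper avoids the issue entirely with one more change of variables. Starting from the superposition formula and the representation \eqref{dens-mes-eq} for $\om'_{K_a(K)}$, it substitutes $y=K_a(u)$ and uses \eqref{Kel2}; the net effect (recall $s+\alpha=d$) is that the factor $|t-a|^{s-2d}\cdot|t-a|^{d+\alpha}$ collapses to $1$, and the density becomes
$$
\hat\mu'(t)=A(d,\alpha)\int_{a\in\supp\mu}F_{K_a(K)}^{-1}
\int_{u\in\R^{d}\setminus K}
\frac{F_{K_a(K)}-U^{\om_{K_a(K)}}(K_{a}(u))}{|t-u|^{d+\alpha}\,|u-a|^{d-\alpha}}\,du\,d\mu(a),
$$
in which \emph{the only dependence on $t$ is through $|t-u|^{-d-\alpha}$}, with $u$ ranging over $\R^d\setminus K$ and hence bounded away from $t_0\in\accentset{\circ}{K}$. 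One then expands $|t-u|^{-d-\alpha}$ in a power series about $t_0$ with radius independent of $u$, checks the integrand is nonnegative (so Tonelli applies), and integrability gives analyticity immediately — no control of $F_{K_t(K)}$ or $\om'_{K_t(K)}$ as functions of $t$ is needed. If you want to keep your line of argument, you would at minimum have to prove uniform bounds on $F_{K_t(K)}$ and on the holomorphic extensions of $\om'_{K_t(K)}(K_t(\cdot))$ for $t\in S_\mu$; the paper's change of variables is the cleaner path because it decouples $t$ from the dependence on the set $K_a(K)$.
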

\begin{proof}
1) Let $E$ be a Lebesgue null set in $\accentset{\circ}{K}$. Since $K_{a}$ is differentiable, $K_{a}(E)$ is also a null set (cf.\ \cite[Lemma 7.25]{Rud}). Hence $\om_{K_{a}(K)}(K_{a}(E))=0$, which, in view of (\ref{Bal=eq}), implies $Bal(\delta_{a},K)(E)=0$. Moreover, with $\om_{K_{a}(K)}'$ the real analytic density of $\om_{K_{a}(K)}$, we get from (\ref{Bal=eq}) that the density of $Bal(\delta_{a},K)$ equals, 
\begin{equation}\label{exp-dens-bal}
\hat\delta_{a}'(t)=F_{K_{a}(K)}^{-1}|t-a|^{s}\om_{K_{a}(K)}'(K_{a}(t))|\det J_{a}(t)|=
F_{K_{a}(K)}^{-1}|t-a|^{s-2d}\om_{K_{a}(K)}'(K_{a}(t)),
\end{equation}
which is a real analytic function of $t$. Indeed,
$$
|t-a|^{s-2d}=\left(\sum_{j=1}^{d}(t_{j}-a_{j})^{2}\right)^{s/2-d}=\exp\left((s/2-d)\log\sum_{j=1}^{d}(t_{j}-a_{j})^{2}\right)
$$ 
is real analytic on $\R^{d}\setminus\{a\}$, and $K_{a}(t)$ is also real analytic on $\R^{d}\setminus\{a\}$.
\\
2) If $E$ is a subset of the interior of $K$ of Lebesgue measure 0, it is clear that $\hat\mu(E)=0$ from (\ref{super}) with $f$ the characteristic function of $E$ and the fact that, for any $t$, $\hat\delta_{t}(E)=0$. It follows from (\ref{dens-mes-eq}), (\ref{super}) and (\ref{exp-dens-bal}) that the density of $\hat\mu$ is given by
\begin{align*}
\hat\mu'(t) & =\int_{a}\hat\delta_{a}'(t)d\mu(a)
=\int_{a} F_{K_{a}(K)}^{-1}|t-a|^{s-2d}\om_{K_{a}(K)}'(K_{a}(t))d\mu(a)
\\[5pt]
& = A(d,\alpha)\int_{a} F_{K_{a}(K)}^{-1}|t-a|^{s-2d}
\int_{y}\frac{F_{K_{a}(K)}-U^{\om_{K_{a}(K)}}(y)}{|K_{a}(t)-y|^{d+\alpha}}dyd\mu(a)
\end{align*}
With the change of variable $y=K_{a}(u)$ and making use of (\ref{Kel2}), we get
\begin{equation}\label{dens-bal-mu}
\hat\mu'(t) =A(d,\alpha)\int_{a\in\supp\mu} F_{K_{a}(K)}^{-1}
\int_{u\in\R^{d}\setminus K}\frac{F_{K_{a}(K)}-U^{\om_{K_{a}(K)}}(K_{a}(u))}{|t-u|^{d+\alpha}|u-a|^{d-\alpha}}dud\mu(a).
\end{equation}
Note that, for $t\in\accentset{\circ}{K}$,
$$
\hat\mu'(t)\leq A(d,\alpha)\int_{a\in\supp\mu} 
\int_{u\in\R^{d}\setminus K}\frac{du}{|t-u|^{d+\alpha}|u-a|^{d-\alpha}}d\mu(a)<+\infty,
$$
Indeed, it follows from 
splitting the inner integral over $|u-a|\leq1$ and $|u-a|>1$, and using the fact that $\mu$ has a compact support. Now, for a given $t_{0}\in\accentset{\circ}{K}$, $|t-u|^{-d-\alpha}$ can be written as a power series centered at $t_{0}$, in a neighborhood of $t_{0}$, independent of $u\in\R^{d}\setminus K$. This, together with (\ref{dens-bal-mu}) and Tonelli's theorem, shows that $\hat\mu'(t)$ is real analytic when $t\in\accentset{\circ}{K}$.

3) This is a consequence of (\ref{exp-dens-bal}), (\ref{Kel2}), and the fact that the geometric assumptions in iii) of Theorem \ref{Wal}, are invariant under a Kelvin transform.
\end{proof}
\begin{lemma}\label{lem-iba}
Assume $(K_{n})_{n\geq0}$ is a sequence of decreasing compact sets with $K=\cap_{n} K_{n}$, with $K$ of positive capacity. Let $\nu$ be a signed measure, with $\nu^{+}$ and $\nu^{-}$ of finite energy. Then, as $n\to\infty$,
$$
Bal(\nu,K_{n})\to Bal(\nu,K)\quad\text{weak-*}.
$$
\end{lemma}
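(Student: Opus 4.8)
The plan is to show weak-* convergence by testing against continuous compactly supported functions and reducing, via the signed decomposition $Bal(\nu, K_n) = Bal(\nu^+, K_n) - Bal(\nu^-, K_n)$, to the case of a single positive measure $\sigma$ of finite energy. So it suffices to prove that $\hat\sigma_n := Bal(\sigma, K_n) \to Bal(\sigma, K) =: \hat\sigma$ weak-*. First I would establish that the sequence $(\hat\sigma_n)$ has uniformly bounded mass: from Remark \ref{Rem-mass-loss}, $\|\hat\sigma_n\| \leq \|\sigma\| < \infty$ for all $n$, and all these measures are supported in the fixed compact set $K_0$. Hence $(\hat\sigma_n)$ is relatively compact in the weak-* topology, and it is enough to show that every weak-* convergent subsequence has limit $\hat\sigma$; by a standard argument this gives convergence of the full sequence.

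So let $\hat\sigma_{n_k} \to \tau$ weak-* along some subsequence, with $\tau$ a positive measure. I would first check $S_\tau \subseteq K$: since $K = \cap_n K_n$ and the $K_n$ are decreasing and compact, any point outside $K$ lies outside some $K_N$, hence has a neighborhood disjoint from $K_n$ for all $n \geq N$, which forces $\tau$ to vanish there. Next I would identify $\tau$ through its potential. Using the principle of descent (lower semicontinuity of potentials under weak-* convergence), $U^\tau(x) \leq \liminf_k U^{\hat\sigma_{n_k}}(x) \leq U^\sigma(x)$ on all of $\R^d$, since $U^{\hat\sigma_n} \leq U^\sigma$ everywhere by \eqref{balayage}. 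For the reverse inequality q.e.\ on $K$, I would use that the balayages onto the nested sets are themselves related by $Bal(\hat\sigma_n, K) = Bal(\sigma, K) = \hat\sigma$ (the transitivity of balayage onto nested compacts, stated in the excerpt), together with energy monotonicity: the energies $I(\hat\sigma_n)$ are nondecreasing as $n \to \infty$ (more balayage onto a smaller set loses more energy, so going from $K_n$ down is a loss; but going from $K_{n+1} \supset$-direction... I must be careful) and bounded by $I(\sigma)$, hence converge; a lower-semicontinuity/energy argument then shows $\tau$ has the minimal energy among measures on $K$ with potential equal to $U^\sigma$ q.e., which characterizes $\tau = \hat\sigma$.

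The cleanest route for the q.e.\ equality is probably this: since $\hat\sigma = Bal(\sigma,K)$ and also $\hat\sigma = Bal(\hat\sigma_n, K)$ for every $n$, we have $U^{\hat\sigma} = U^{\hat\sigma_n}$ q.e.\ on $K$ and $U^{\hat\sigma}\le U^{\hat\sigma_n}$ on $\R^d$. Combined with $U^{\hat\sigma_n} \le U^\sigma$ this pins $U^{\hat\sigma_n}$ between $U^{\hat\sigma}$ and $U^\sigma$. Then I would integrate against $\tau$ or against $\hat\sigma$ and use the weak-* convergence plus the energy principle (finite-energy measures are determined by their potentials q.e.) to conclude $\tau = \hat\sigma$. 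Concretely, $I(\hat\sigma, \hat\sigma_{n_k}) = \int U^{\hat\sigma}\, d\hat\sigma_{n_k} \to \int U^{\hat\sigma}\, d\tau$ (since $U^{\hat\sigma}$ is, after balayage onto $K$, bounded and continuous enough on the relevant sets, or one approximates), while also $\int U^{\hat\sigma}\, d\hat\sigma_{n_k} = \int U^{\hat\sigma_{n_k}}\, d\hat\sigma = \int U^{\hat\sigma}\, d\hat\sigma$ using $U^{\hat\sigma_{n_k}} = U^{\hat\sigma}$ q.e.\ on $K \supseteq S_{\hat\sigma}$. This gives $\int U^{\hat\sigma} d\tau = I(\hat\sigma)$, and a parallel computation of $I(\tau,\hat\sigma_{n_k})$ yields $I(\tau) \le I(\hat\sigma)$; since $\tau$ and $\hat\sigma$ are both supported on $K$ with $U^\tau = U^{\hat\sigma}$ q.e.\ on $K$ (from $U^\tau \le U^\sigma$, $U^\tau \ge$ the limit from below, and the sandwiching), the energy principle forces $\tau = \hat\sigma$.

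The main obstacle I anticipate is the justification of passing to the limit inside potential integrals like $\int U^{\hat\sigma} \, d\hat\sigma_{n_k}$: the kernel $|x-y|^{-s}$ is unbounded on the diagonal, so weak-* convergence of the measures does not immediately give convergence of such integrals. Handling this requires either a uniform integrability / equicontinuity input on the potentials $U^{\hat\sigma}$ and $U^{\hat\sigma_{n_k}}$ near $K$, or a truncation argument combined with the uniform mass and energy bounds, or an appeal to a known continuity theorem for balayage (the measures $\hat\sigma_{n_k}$ all have potentials dominated by $U^\sigma$ and agreeing with $U^{\hat\sigma}$ q.e.\ on $K$, which should provide enough control). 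A careful writeup would invoke the principle of descent for the lower bound and a dominated-convergence argument, using $U^{\hat\sigma} \le U^\sigma$ as an integrable majorant against the uniformly bounded masses, for the needed limits — but this is the technical heart where the Riesz-kernel singularity must be dealt with honestly.
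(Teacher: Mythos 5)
Your skeleton (Jordan decomposition reduces to a positive $\sigma$ of finite energy; extract a weak-* subsequential limit $\tau$ of $Bal(\sigma,K_n)$; show $S_\tau\subseteq K$; then identify $\tau$ with $Bal(\sigma,K)$) matches the paper's, but the identification step has the genuine gap you yourself flag and do not close. From the principle of descent and the transitivity of balayage you can derive $U^{\tau}\le U^{Bal(\sigma,K)}$ q.e.\ on $K$, but you never justify the reverse inequality ``$U^\tau\ge$ the limit from below'' that your sandwiching invokes; the principle of descent gives only one direction. And the energy-integral route you propose as a substitute requires passing to the limit in $\int U^{Bal(\sigma,K)}\,d\,Bal(\sigma,K_{n_k})$, where the singularity of the Riesz kernel and the mere lower semicontinuity of $U^{Bal(\sigma,K)}$ both point the wrong way: Fatou for a lower semicontinuous integrand against vaguely convergent measures gives $\liminf\ge$, and $U^{Bal(\sigma,K)}\le U^\sigma$ is not a usable majorant for dominated convergence because the measures themselves vary. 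So the argument as written does not reach the conclusion.

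The tool you are missing is the \emph{lower envelope theorem} (Landkof, Chapter~III), which is strictly stronger than the principle of descent: for a vaguely convergent sequence of positive measures with uniformly bounded masses one has $\liminf_n U^{Bal(\sigma,K_n)}=U^{\tau}$ \emph{quasi-everywhere}, not merely $\ge$. The paper uses it directly: since $K\subset K_n$, for every $n$ one has $U^{Bal(\sigma,K_n)}=U^\sigma=U^{Bal(\sigma,K)}$ q.e.\ on $K$, so the lower envelope theorem yields $U^{\tau}=U^{Bal(\sigma,K)}$ q.e.\ on $K$; combined with lower semicontinuity of energy, $I(\tau)\le\liminf_n I(Bal(\sigma,K_n))\le I(\sigma)<\infty$, and the fact that both $\tau$ and $Bal(\sigma,K)$ are supported on $K$, the uniqueness (energy) theorem immediately gives $\tau=Bal(\sigma,K)$. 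This bypasses altogether the diagonal-singularity limit interchange you correctly identify as the technical heart but cannot resolve with your current toolkit.
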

\begin{proof}
First consider the case of $\nu$ a positive measure of finite energy. 
Assume $\tilde\nu$ is a weak-* limit of some subsequence of $Bal(\nu,K_{n})$. Then, by definition of balayage,
$$
\forall n,\qquad U^{Bal(\nu,K_{n})}=U^{Bal(\nu,K)},  \qquad\text{ q.e.\ on }K,
$$
and
$$
U^{\tilde\nu}=\liminf_{n}U^{Bal(\nu,K_{n})}=U^{Bal(\nu,K)},  \qquad\text{ q.e.\ on }K,
$$
where we use the lower envelope theorem for the first equality. Since 
$Bal(\nu,K)$ is of finite energy, and
$$
I(\tilde\nu)\leq\liminf_{n}I(Bal(\nu,K_{n}))\leq I(\nu)<\infty,
$$
we derive from the uniqueness theorem, see \cite[p.178]{Land}, that $\tilde\nu=Bal(\nu,K)$. Hence, the entire sequence $Bal(\nu,K_{n})$ tends weak-* to $Bal(\nu,K)$.

If $\nu$ is a signed measure with $I(\nu^{+}), I(\nu^{-})<\infty$, by applying the previous result, we get, as $n\to\infty$,
$$
Bal(\nu,K_{n})=Bal(\nu^{+},K_{n})-Bal(\nu^{-},K_{n})\to Bal(\nu^{+},K)-Bal(\nu^{-},K)
=Bal(\nu,K),
$$
where the convergence is in the weak-* sense.
\end{proof}
In the sequel, we will also use the notion of signed equilibrium measure.
\begin{definition}
Let $\Sigma$ be a closed subset of $\R^d$. A {\it  signed equilibrium measure} for $\Sigma$ in the external field $Q$ is a {(finite)} signed measure $\eta_{Q,\Sigma}$ {with finite energy}, supported on $\Sigma$, such that $m(\eta_{Q,\Sigma}) = 1$, and there exists a finite constant $C_{Q,\Sigma}$ such that
\begin{equation}\label{defsigned}
U^{\eta_{Q,\Sigma}}(x) + Q(x) = C_{Q,\Sigma}\quad\text{q.e. on }\Sigma.
\end{equation}
\end{definition}
If this signed equilibrium measure exists, which we assume in the sequel, then it is unique, see \cite[Lemma 23]{BDS2009}.
It follows from our definition that both $\eta_{Q,\Sigma}^+$ and $\eta_{Q,\Sigma}^-$ have finite energy.
The main properties of $\eta_{Q,\Sigma}$ are the following ones, see \cite[Lemma 3.15]{DOSW},
\begin{lemma}\label{inclusion}
Let $\om_{Q,\Sigma}$ be the equilibrium measure for $\Sigma$ and $Q$, which we assume to exist.
\\[5pt]
(i) One has
\begin{equation}\label{signeddomin}
\om_{Q,\Sigma} \leq \eta_{Q,\Sigma}^+.
\end{equation}
In particular,
$S_{{Q,\Sigma}} \subseteq S_{\eta_{Q,\Sigma}^+}$.
\\[5pt]
(ii) Let $\Sigma_{1}$ be a closed subset of $\Sigma$ that admits a signed equilibrium measure $\eta_{Q,\Sigma_1}$, and such that $S_{{Q,\Sigma}}\subset\Sigma_{1}$. If $\eta_{Q,\Sigma_{1}}$ is a positive measure, then $\om_{Q,\Sigma}=\eta_{Q,\Sigma_{1}}$.
\end{lemma}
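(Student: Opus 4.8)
The plan is to derive both parts from the Frostman inequalities \eqref{Frostman1}--\eqref{Frostman2} together with the defining equation \eqref{defsigned} of the signed equilibrium measure, using the domination (or maximum) principle for Riesz potentials.

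For part (i), I would compare the two potentials. On $S_{Q,\Sigma}$ we have $U^{\om_{Q,\Sigma}} + Q = F_Q$ q.e., while everywhere q.e.\ on $\Sigma$ we have $U^{\eta_{Q,\Sigma}} + Q = C_{Q,\Sigma}$. Subtracting, $U^{\eta_{Q,\Sigma}} - U^{\om_{Q,\Sigma}} = C_{Q,\Sigma} - F_Q$ q.e.\ on $S_{Q,\Sigma}$, and from \eqref{Frostman1}, $U^{\eta_{Q,\Sigma}} - U^{\om_{Q,\Sigma}} \le C_{Q,\Sigma} - F_Q$ q.e.\ on all of $\Sigma$. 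Now write $\eta_{Q,\Sigma} = \eta^+_{Q,\Sigma} - \eta^-_{Q,\Sigma}$ and consider the signed measure $\sigma := \om_{Q,\Sigma} + \eta^-_{Q,\Sigma} - \eta^+_{Q,\Sigma}$, which has finite energy (all three pieces do). Its potential satisfies $U^\sigma = U^{\om_{Q,\Sigma}} - U^{\eta_{Q,\Sigma}} \ge F_Q - C_{Q,\Sigma}$ q.e.\ on $\Sigma \supseteq S_{\om_{Q,\Sigma}} \cup S_{\eta^+_{Q,\Sigma}}$, i.e.\ q.e.\ on the support of the positive part of $\sigma$ (note the positive part of $\sigma$ is supported in $S_{\om_{Q,\Sigma}}\cup S_{\eta^+_{Q,\Sigma}} \subseteq \Sigma$, while the constant is attained q.e.\ on $S_{\om_{Q,\Sigma}}$, the support of a subpart of $\sigma^-$ weighted against $\sigma^+$). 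Applying the domination principle for measures of finite energy (the $\sigma^+ \le \sigma^-$ consequence of the maximum principle, see e.g.\ \cite[p.178]{Land} or the argument in \cite[Lemma 3.15]{DOSW}) yields $\om_{Q,\Sigma} + \eta^-_{Q,\Sigma} \le \eta^+_{Q,\Sigma}$, and since $\om_{Q,\Sigma}$ and $\eta^-_{Q,\Sigma}$ are mutually singular with $\om_{Q,\Sigma} \ge 0$, this forces $\om_{Q,\Sigma} \le \eta^+_{Q,\Sigma}$. The support inclusion $S_{Q,\Sigma} \subseteq S_{\eta^+_{Q,\Sigma}}$ is then immediate.

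For part (ii), suppose $\eta_{Q,\Sigma_1} \ge 0$. It is a positive measure of total mass $1$, supported on $\Sigma_1$, satisfying $U^{\eta_{Q,\Sigma_1}} + Q = C_{Q,\Sigma_1}$ q.e.\ on $\Sigma_1$; hence it satisfies the Frostman equality \eqref{Frostman2} on its own support and, to conclude it equals $\om_{Q,\Sigma}$, it suffices to verify the Frostman inequality \eqref{Frostman1} q.e.\ on all of $\Sigma$, since the equilibrium measure is uniquely characterized by \eqref{Frostman1}--\eqref{Frostman2}. On $\Sigma_1$ the inequality holds with equality (hence $C_{Q,\Sigma_1} = F_Q$), so the work is to extend it to $\Sigma \setminus \Sigma_1$. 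Here I would use part (i) applied on $\Sigma$ itself — or rather compare $\eta_{Q,\Sigma_1}$ with $\om_{Q,\Sigma}$ directly: since $S_{Q,\Sigma} \subseteq \Sigma_1$, both measures live on $\Sigma_1$, and one shows $U^{\eta_{Q,\Sigma_1}} - U^{\om_{Q,\Sigma}}$ is a constant q.e.\ on $\Sigma_1$ (both equal their respective constants minus $Q$ there), of the correct sign by the minimum principle, forcing equality of the constants and then of the measures by uniqueness of finite-energy measures with equal potentials q.e.\ on a set carrying both.

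The main obstacle is the careful bookkeeping in part (i): one must check that the domination/maximum principle is being applied to a genuinely finite-energy signed measure and that the potential inequality holds q.e.\ precisely on the support of its positive part, not merely on $\Sigma$. The subtlety is that $\om_{Q,\Sigma}$ and $\eta^+_{Q,\Sigma}$ may overlap, so one cannot naively split $\sigma$ into positive and negative parts along $\om_{Q,\Sigma}$ versus $\eta^+_{Q,\Sigma}$; the cleanest route is to invoke the domination principle in the form ``if $U^{\mu_1} \le U^{\mu_2}$ q.e.\ on $S_{\mu_1}$ for finite-energy positive measures $\mu_1,\mu_2$, then $\mu_1 \le \mu_2$ everywhere,'' after first reducing to that situation. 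Since this is exactly the content cited from \cite[Lemma 3.15]{DOSW}, I would lean on that reference for the delicate step and keep the present argument at the level of indicating how \eqref{Frostman1}--\eqref{Frostman2} and \eqref{defsigned} feed into it.
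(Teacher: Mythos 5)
The paper itself does not reprove this lemma: it simply cites \cite[Lemma 3.15]{DOSW}. However, the identical technique appears in the paper's own proof of Lemma~\ref{Lem-KD-impr}, so that is the natural internal benchmark. Your overall strategy --- combine the Frostman inequalities \eqref{Frostman1}--\eqref{Frostman2} with the defining identity \eqref{defsigned} to compare $U^{\om_{Q,\Sigma}+\eta^-_{Q,\Sigma}}$ with $U^{\eta^+_{Q,\Sigma}}$, then invoke a domination principle --- is exactly the right one and matches both DOSW and Lemma~\ref{Lem-KD-impr}. But two specific steps in your sketch do not hold up.

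First, the form of the domination principle you quote (``if $U^{\mu_1}\le U^{\mu_2}$ q.e.\ on $S_{\mu_1}$ for finite-energy positive $\mu_1,\mu_2$, then $\mu_1\le\mu_2$ everywhere'') is false: Cartan's domination principle propagates a \emph{potential} inequality from $S_{\mu_1}$ to all of $\R^d$, it does not yield a \emph{measure} inequality. What is actually used in the proof of Lemma~\ref{Lem-KD-impr} is \cite[Theorem~3.2]{DOSW}, which is more delicate: it requires a global inequality $U^\nu\le U^\mu+C$ q.e.\ on the whole conductor, \emph{equality} q.e.\ on a subset $S^*$, and a nonnegative constant $C\ge0$, and it yields the restricted comparison $\mu|_{S^*}\le\nu|_{S^*}$. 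You never verify the nonnegativity of the constant $C_{Q,\Sigma}-F_Q$; in the paper's Lemma~\ref{Lem-KD-impr} this is a real step, established by integrating the potential inequality against the unweighted equilibrium measure of the relevant set and using that both measures have total mass $1$.

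Second, your final reduction via ``mutual singularity of $\om_{Q,\Sigma}$ and $\eta^-_{Q,\Sigma}$'' is circular: mutual singularity is equivalent to $\om_{Q,\Sigma}$ being supported where $\eta_{Q,\Sigma}\ge0$, which is precisely the conclusion $\om_{Q,\Sigma}\le\eta^+_{Q,\Sigma}$ you are trying to prove (indeed, if the inequality $\om_{Q,\Sigma}+\eta^-_{Q,\Sigma}\le\eta^+_{Q,\Sigma}$ held everywhere and not merely on $S_{Q,\Sigma}$, restricting to $S_{\eta^-_{Q,\Sigma}}$ would force $\eta^-_{Q,\Sigma}=0$, which is absurd in general). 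The correct and simpler conclusion is: the domination principle gives $\om_{Q,\Sigma}+\eta^-_{Q,\Sigma}\le\eta^+_{Q,\Sigma}$ on $S_{Q,\Sigma}$ (where the potential equality holds); dropping the nonnegative term $\eta^-_{Q,\Sigma}$ gives $\om_{Q,\Sigma}\le\eta^+_{Q,\Sigma}$ on $S_{Q,\Sigma}$, and off $S_{Q,\Sigma}$ one has $\om_{Q,\Sigma}=0\le\eta^+_{Q,\Sigma}$, giving \eqref{signeddomin} globally. Part (ii) as you sketch it (verify Frostman for the candidate $\eta_{Q,\Sigma_1}$, noting that equality on $\Sigma_1$ gives \eqref{Frostman2}, and extend \eqref{Frostman1} to $\Sigma\setminus\Sigma_1$ by a maximum/minimum principle) is sound in outline, though ``hence $C_{Q,\Sigma_1}=F_Q$'' should be a conclusion of the argument, not a parenthetical assumption.
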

We next recall \cite[Lemma 5.5]{DOSW} which describes the balayage of the point mass $\delta_{y}$, $y=(0,y_{d+1})$, onto the ball $B_{R}\subset\R^{d}$ of radius $R$. It follows from 3) of Proposition \ref{bal-abs-cont} that its density near the boundary of $B_{R}$ behaves, up to a multiplicative constant, like $(R^{2}-|x|^{2})^{-\alpha/2}$. Hence, we set
$$
\Lambda_{R}^{*}:= \lim_{|x|\rightarrow R_{-}}(R^2 - |x|^2)^{\alpha/2}Bal'(\delta_{y},B_{R})(x).
$$
We denote by $A_{d}$ the surface area of the $d$-dimensional sphere $\mathbb{S}^{d}$, and by $W(\mathbb{S}^{d})$ its Riesz energy, with values
\begin{align*}
A_{d}=\frac{2 {\pi}^{(d+1)/2}}{\Gamma \left({(d+1)}/{2}\right)}, & \qquad 
W(\mathbb{S}^{d})= \begin{cases} {\displaystyle \frac{\Gamma \left(\frac{d+1}{2}\right)\Gamma (\alpha)}{\Gamma \left(\frac{\alpha+1}{2}\right)\Gamma \left(d-\,\frac{s}{2} \right)}}, & 0<s<d,\,d\geq 3,
\\[10pt]
{\displaystyle 2^{1-s}/(2-s)}, & 0<s<2, \,d=2, \end{cases}
\\[10pt]
W(\mathbb{S}^{1}) & =2^{-s}\pi^{-1/2}\Gamma((1-s)/2)/\Gamma(1-s/2),\quad0<s<1,
\end{align*}
see \cite[Formula (4.6.5)]{BHS} for $W(\mathbb{S}^{d})$.
\begin{lemma}\label{lem:balR}
Let $y = (0; y_{d+1})$ with $y_{d+1} >0$. 
\\[5pt]
(i) The density $Bal' (\delta_y, B_{R})(x)$, $|x|<R$,
is given by
\begin{equation}\label{balayageBR}
Bal' (\delta_y, B_R)(x) = \frac{(2y_{d+1})^{\alpha}}{W(\mathbb{S}^{d})A_{d}}
 \left(\frac{1}{(|x|^2 + y_{d+1}^2)^{d-s/2}}+
 \frac{\sin(\alpha\frac{\pi}{2})I(x)}{\pi (R^2 - |x|^2)^{\alpha/2}}\right),
\end{equation}
where 
\begin{equation*}
I(x)=\int_0^{+\infty}\frac{v^{\alpha/2}\,dv}{(v+R^2+y^2_{d+1})^{d-s/2} (v+R^2-|x|^2)}.
\end{equation*}
(ii) We have 
\begin{equation}\label{beh-bal}
\Lambda_{R}^{*}=
\frac{y_{d+1}^{\alpha}K_{s,d}^{(1)}}{(R^2 + y_{d+1}^2)^{d/2}},\qquad
K_{s,d}^{(1)}=\frac{2^{\alpha}\sin(\alpha\frac{\pi}{2})B(d/2,\alpha/2)}
{\pi A_{d}W(\mathbb{S}^{d})}.
\end{equation}
\end{lemma}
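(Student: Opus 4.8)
The plan is to derive both parts of Lemma \ref{lem:balR} from the explicit formula \eqref{Bal-in-out} for the balayage of a single point mass $\delta_t$ with $|t|>R$ onto the ball $B_R$, combined with a reduction from the point $y=(0;y_{d+1})\in\R^{d+1}$ to a point genuinely outside $B_R$ inside $\R^d$. First I would recall that, for the external-field problem, the relevant measure is the balayage onto $B_R\subset\R^d$ of the point mass at $y$, which sits in $\R^{d+1}$ at height $y_{d+1}>0$; one standard device (used already in \cite{DOSW}) is to write $\delta_y$ restricted to the hyperplane $\R^d$ through an integral over the $d$-dimensional sphere of radius obtained from $y$, or equivalently to use the rotational symmetry so that $Bal(\delta_y,B_R)$ is a radial measure whose density depends only on $r=|x|$. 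Concretely, by \eqref{Bal-in-out} the balayage of $\delta_t$, $|t|>R$, has density proportional to $\big((|t|^2-R^2)/(R^2-|x|^2)\big)^{\alpha/2}|t-x|^{-d}$; integrating this against the measure on the sphere $\{|t|=\sqrt{|x_0|^2+y_{d+1}^2}\}$ induced by $\delta_y$ (or rather superposing over the appropriate slices as $y$ lies above the ball) produces the two-term structure in \eqref{balayageBR}: the first term $(|x|^2+y_{d+1}^2)^{-(d-s/2)}$ is exactly the ``free'' part $U^{\delta_y}$ re-expressed as a density via the known inversion formula for Riesz potentials (the analogue of \eqref{dens-mes-eq}), while the second term, carrying the factor $(R^2-|x|^2)^{-\alpha/2}$ and the integral $I(x)$, is the genuine boundary-layer correction. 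The constants $(2y_{d+1})^\alpha$, $1/(W(\mathbb{S}^d)A_d)$ and $\sin(\alpha\pi/2)/\pi$ should all emerge from the normalization constant $A(d,\alpha)$ in \eqref{dens-mes-eq}, the surface area $A_d$, and the reflection-formula identity $\Gamma(\alpha/2)\Gamma(1-\alpha/2)=\pi/\sin(\alpha\pi/2)$; I would pin down $A(d,\alpha)$ by testing the formula on a case where everything is known, e.g.\ comparing the total mass via \eqref{mass-bal} or matching against \eqref{equilball} in a degenerate limit.

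For part (i), after setting up the integral representation I would perform the substitution $v = R^2\,t$ or $v = (R^2+y_{d+1}^2)t$ inside $I(x)$ to recognize it as a Beta-type / hypergeometric integral, but in fact the cleanest route is to \emph{not} evaluate $I(x)$ and instead verify \eqref{balayageBR} directly by checking that the measure $\mu$ with that density satisfies the two defining properties of balayage: $U^\mu = U^{\delta_y}$ q.e.\ on $B_R$ and $U^\mu\le U^{\delta_y}$ on $\R^d$, together with $S_\mu\subseteq B_R$. The potential $U^\mu$ can be computed termwise; the first term reproduces $U^{\delta_y}$ up to a potential of a measure supported off $B_R$, and the second term (a multiple of the equilibrium-type density $(R^2-|x|^2)^{-\alpha/2}$ weighted by $I(x)$) must exactly cancel the discrepancy on $B_R$. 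This is the kind of identity that, once the ansatz is written down, reduces to a one-dimensional integral identity over $v\in(0,\infty)$ that can be checked by standard Riesz-kernel integral formulas (essentially \cite[\S1]{Land} or the formulas in \cite[Ch.\ 1]{Land} / \cite{BHS}). Alternatively, and perhaps more in the spirit of the paper, one cites that \eqref{balayageBR} is precisely \cite[Lemma 5.5]{DOSW} transcribed with $R$ general, so part (i) may largely be a matter of bookkeeping the radius $R$ through that computation.

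For part (ii), I would simply take the limit $|x|\to R_-$ in \eqref{balayageBR}. The first term stays bounded, so after multiplying by $(R^2-|x|^2)^{\alpha/2}$ it drops out; the surviving contribution is
\[
\Lambda_R^* = \frac{(2y_{d+1})^\alpha}{W(\mathbb{S}^d)A_d}\cdot\frac{\sin(\alpha\pi/2)}{\pi}\, I(R),
\qquad I(R)=\int_0^\infty \frac{v^{\alpha/2}\,dv}{(v+R^2+y_{d+1}^2)^{d-s/2}\,(v+R^2)}.
\]
The integral $I(R)$ is now a genuine Beta integral: with $d-s/2 = d-\tfrac{d-\alpha}{2} = \tfrac{d+\alpha}{2}$, the exponents on the two linear factors are $\tfrac{d+\alpha}{2}$ and $1$, summing to $\tfrac{d+\alpha}{2}+1$, while the numerator exponent is $\tfrac{\alpha}{2}$; the two ``masses'' $R^2+y_{d+1}^2$ and $R^2$ differ, so $I(R)$ is an Euler-type integral yielding a hypergeometric ${}_2F_1$ in general — but the exponent $1$ on the factor $(v+R^2)$ makes it collapse. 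Splitting $1/\big((v+R^2+y_{d+1}^2)^{(d+\alpha)/2}(v+R^2)\big)$ by partial fractions in the variable $w=v+R^2$, or substituting $v = (R^2+y_{d+1}^2)u/(1-u)$, one reduces $I(R)$ to $B(d/2,\alpha/2)$ times $y_{d+1}^{-?}$ and a power of $R^2+y_{d+1}^2$; tracking the powers gives exactly $I(R) = c\, y_{d+1}^{\alpha-d}(R^2+y_{d+1}^2)^{-?}$ so that, after multiplying by $(2y_{d+1})^\alpha$, the $y_{d+1}$ powers combine to give the stated $y_{d+1}^\alpha$ and the $R$-dependence becomes $(R^2+y_{d+1}^2)^{-d/2}$. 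Assembling the constants produces $K_{s,d}^{(1)}$ as displayed. The main obstacle is the careful evaluation of $I(R)$ and the exact tracking of all $\Gamma$-factors and powers of $2$, $y_{d+1}$, and $R^2+y_{d+1}^2$ — the structure is routine once the Beta-integral reduction is set up, but it is error-prone, and I would double-check it against \cite[Lemma 5.5]{DOSW} in the case $R=1$.
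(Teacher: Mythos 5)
The paper itself gives no proof of this lemma---it is recalled verbatim from \cite[Lemma 5.5]{DOSW}---so any proof you supply is necessarily a different ``route.'' Your sketch for part (i) is broadly plausible but contains an initial gap: the point $y=(0,y_{d+1})$ lies in $\R^{d+1}$, not in $\R^d$, so \eqref{Bal-in-out} (which sweeps a point of $\R^d$ onto a ball in $\R^d$) cannot be applied directly. The standard reduction is to first sweep $\delta_y$ onto the hyperplane $\R^d$, yielding a density proportional to $y_{d+1}^{\alpha}/(|x|^{2}+y_{d+1}^{2})^{(d+\alpha)/2}$ on \emph{all} of $\R^d$ (not a measure on a sphere, as your description suggests), and only then balayage the exterior part of this density onto $B_R$; the two terms of \eqref{balayageBR} correspond to the restriction of this intermediate density to $B_R$ and to the balayage of its tail. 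Your alternative of verifying the two defining balayage properties directly is sound in principle, and the acknowledgment that one can simply track the radius $R$ through the computation in \cite{DOSW} is effectively what the paper does.

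However, part (ii) of your argument contains a concrete error that derails the computation. In the integrand of $I(x)$ the last factor is $v+R^{2}-|x|^{2}$, and as $|x|\to R^{-}$ this tends to $v$, \emph{not} to $v+R^{2}$; you have in effect taken the limit $|x|\to 0$ instead of $|x|\to R$. The correct limiting integral is
\begin{equation*}
\lim_{|x|\to R^{-}} I(x)\;=\;\int_{0}^{\infty}\frac{v^{\alpha/2-1}\,dv}{(v+R^{2}+y_{d+1}^{2})^{(d+\alpha)/2}}
\;=\;(R^{2}+y_{d+1}^{2})^{-d/2}\,B\!\left(\tfrac{\alpha}{2},\tfrac{d}{2}\right),
\end{equation*}
obtained by the substitution $v=(R^{2}+y_{d+1}^{2})u$ and the Euler Beta integral, since $d-s/2=(d+\alpha)/2$. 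This gives exactly $\Lambda_{R}^{*}=\dfrac{(2y_{d+1})^{\alpha}\sin(\alpha\pi/2)\,B(d/2,\alpha/2)}{\pi\,W(\mathbb{S}^{d})A_{d}\,(R^{2}+y_{d+1}^{2})^{d/2}}$ as required, with the $y_{d+1}$ dependence coming entirely from the $(2y_{d+1})^{\alpha}$ prefactor. Your integral with the extra $(v+R^{2})$ factor does not reduce to a Beta function, gives the wrong power of $y_{d+1}$, and would produce a hypergeometric expression inconsistent with \eqref{beh-bal}.
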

Now, the signed equilibrium measure, denoted $\eta_{Q,R}$, 
for the closed ball $B_{R}$ exists and can be expressed in terms of the balayage $Bal (\delta_y,B_{R})$ and the equilibrium measure $\omega_{R}$ as 
\begin{equation}\label{sgn-meas}
\eta_{Q,R} = - \gamma Bal (\delta_y,B_{R}) + (1 + \gamma m_R)\omega_R,
\end{equation}
where $m_R$ denotes the mass of $Bal (\delta_y, B_R)$. 
The density of $\eta_{Q,R}$ near the boundary $|x| = R$ satisfies 
\begin{align}
H(R) & := \lim_{|x|\rightarrow R}(R^2 - |x|^2)^{\alpha/2}\eta'_{Q,R}(x) =
-\gamma\Lambda_{R}^{*}+(1+\gamma m_{R})c_{R} \notag
\\
& =-\frac{\gamma y_{d+1}^{\alpha}K_{s,d}^{(1)}}{(R^2 + y_{d+1}^2)^{d/2}}
+\frac{\Gamma (1+s/2)}{\pi^{d/2} \Gamma (1-\alpha/2)}\frac{(1+\gamma m_{R})}{R^{s}}. \label{HR}
\end{align}
For $\gamma<-1$, the equation $H(R)=0$ has a unique solution $R=y_{d+1}\sqrt{z}$, where $z$ is the unique positive solution of the equation
\begin{equation}\label{sol}
z^{s/2+1}{}_2F_{1}\left(1+\frac{s}{2},1+\frac{d}{2},2+\frac{s}{2},-z\right)
= -\frac{\Gamma(\alpha/2)\Gamma(2+s/2)}{\gamma\Gamma(1+d/2)},
\end{equation}
see \cite[Theorem 4.1 (iii)]{DOSW}.
\section{The case $\max(0,d-2)<s<d$ for a ball in $\R^{d}$, $d\geq1$}
\label{Sec-Riesz}
\subsection{The case of an attractive charge $\gam\leq0$}
\label{Neg}
In this section, we study the Riesz equilibrium on the ball $\bar\B\subset\R^{d}$ with the external field $Q$ given by an attractive charge $\gamma$ at a fixed position $(0,y_{d+1})$, as $\gamma$ goes from $-\infty$ to 0.
\begin{lemma}\label{lem-neg}
(i) If $\gamma<-1$, there exists a unique $R_{\gamma}>0$ such that $H(R_{\gamma})=0$ and
$$
H(R)>0\text{ for }0<R<R_{\gamma},\qquad 
H(R)<0\text{ for }R>R_{\gamma},
$$
and
$$
\forall R\leq R_{\gamma},\quad\eta_{Q,R}=\eta_{Q,B_{R}}\text{ is a positive measure}.
$$
(ii) If $-1\leq\gamma<0$, we have
$$
\forall R>0,\quad H(R)>0\quad\text{ and }\quad
\eta_{Q,R}\text{ is a positive measure}.
$$
\end{lemma}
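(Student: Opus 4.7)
For (ii), the key observation is that the point $y=(0,y_{d+1})\in\R^{d+1}$ with $y_{d+1}>0$ lies outside any ball $B_R\subset\R^d$, so by the maximum principle for Riesz potentials, $U^{\omega_R}(y)<F_R=1/\capa(B_R)$, and the mass formula (\ref{mass-bal}) yields $m_R<1$. For $-1\leq\gamma<0$ this gives $1+\gamma m_R>1+\gamma\geq 0$ and $-\gamma\geq 0$; in the decomposition (\ref{sgn-meas}) both coefficients are nonnegative, so $\eta_{Q,R}$ is a nonnegative combination of the positive measures $\omega_R$ and $Bal(\delta_y,B_R)$, hence positive. The same signs applied to (\ref{HR}) give $H(R)>0$.

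For (i), the existence and uniqueness of $R_\gamma$ solving $H(R_\gamma)=0$ are direct from \cite[Theorem~4.1~(iii)]{DOSW}. To pin down the sign of $H$ on either side, I would examine the limits: as $R\to 0^+$, $m_R\to 0$ and $\Lambda_R^*$ stays bounded while $c_R\to+\infty$, so $H(R)\to+\infty$; as $R\to\infty$, the change of variables $x=R\xi$ in $U^{\omega_R}(y)$ (using $|x-y|^2=R^2|\xi|^2+y_{d+1}^2$) combined with monotone convergence gives $R^s U^{\omega_R}(y)\to U^{\omega_1}(0)=F_1$, hence $m_R=\capa(B_R)U^{\omega_R}(y)\to\capa(B_1)F_1=1$; then $(1+\gamma m_R)c_R\sim(1+\gamma)c_R<0$ while $-\gamma\Lambda_R^*=O(R^{-d})$ decays strictly faster than $c_R=O(R^{-s})$ (since $s<d$), so $H(R)<0$ for large $R$. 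Continuity of $H$ together with uniqueness of its zero then give $H>0$ on $(0,R_\gamma)$ and $H<0$ on $(R_\gamma,\infty)$.

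The delicate part of (i), and the main obstacle, is the positivity of $\eta_{Q,R}$ for $R\leq R_\gamma$. The plan is to rewrite the density, using (\ref{balayageBR}) and (\ref{sgn-meas}), in the form
$$\eta_{Q,R}'(x)=\frac{-\gamma F(x)+(1+\gamma m_R)c_R}{(R^2-|x|^2)^{\alpha/2}},\qquad F(x):=(R^2-|x|^2)^{\alpha/2}Bal'(\delta_y,B_R)(x),$$
where $F$ extends continuously to $\bar B_R$ with $F(R)=\Lambda_R^*$, so the numerator on the boundary equals $H(R)$. The crucial bound is $F(x)\geq\Lambda_R^*$ for all $x\in\bar B_R$; once it is established, since $-\gamma>0$ the numerator is bounded below by $-\gamma\Lambda_R^*+(1+\gamma m_R)c_R=H(R)\geq 0$ for $R\leq R_\gamma$, giving $\eta_{Q,R}'\geq 0$. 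To prove this bound, use (\ref{balayageBR}), the identity
$$I(x)-I(R)=-(R^2-|x|^2)\int_0^\infty\frac{v^{\alpha/2-1}\,dv}{(v+R^2-|x|^2)(v+R^2+y_{d+1}^2)^{d-s/2}},$$
and substitute $v=(R^2-|x|^2)w$; setting $p:=R^2-|x|^2\geq 0$ and $q:=|x|^2+y_{d+1}^2$, the inequality reduces after cancellation of $p^{\alpha/2}$ to
$$\int_0^\infty\frac{w^{\alpha/2-1}\,dw}{(w+1)(p(w+1)+q)^{d-s/2}}\;\leq\;\frac{1}{q^{d-s/2}}\,B(\alpha/2,1-\alpha/2),$$
which is immediate from $(p(w+1)+q)^{d-s/2}\geq q^{d-s/2}$ together with the Beta integral $\int_0^\infty w^{\alpha/2-1}/(w+1)\,dw=B(\alpha/2,1-\alpha/2)=\pi/\sin(\alpha\pi/2)$, valid because $0<\alpha<2$ under (\ref{s-value}).
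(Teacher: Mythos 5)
Your proof is correct, but it follows a more self-contained route than the paper, whose proof of this lemma is essentially by citation: the sign analysis of $H$ and the uniqueness of $R_\gamma$ are taken from the proof of \cite[Theorem 4.1]{DOSW}, and the positivity of $\eta_{Q,R}$ for $R<R_\gamma$ is obtained by rerunning the argument given there for $R=R_\gamma$ with equation (5.21) of \cite{DOSW} replaced by $H(R)>0$. You instead re-derive the sign pattern of $H$ from the limits $R\to0^+$ and $R\to\infty$ (your scaling computation $m_R\to1$, using $\capa(B_R)=R^s\capa(B_1)$, is correct) together with continuity and uniqueness of the zero, and you prove positivity directly from the explicit density \eqref{balayageBR}: writing $(R^2-|x|^2)^{\alpha/2}\eta'_{Q,R}(x)=-\gamma F(x)+(1+\gamma m_R)c_R$ and showing $F(x)\geq\Lambda_R^*$ via the identity for $I(x)-I(R)$, the substitution $v=(R^2-|x|^2)w$, and the Beta integral $\int_0^\infty w^{\alpha/2-1}(1+w)^{-1}dw=\pi/\sin(\alpha\pi/2)$ (valid since $0<\alpha<2$). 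I checked this computation; it is right, and equality at $|x|=R$ is consistent with Lemma \ref{lem:balR}(ii). It is worth noting that your key bound $F(x)\geq F(R)=\Lambda_R^*$ is exactly the statement that the bracket appearing in the proof of Lemma \ref{lem-quo-incr} attains its minimum at the boundary; that lemma (whose computation does not use the sign of $\gamma$) proves the stronger monotonicity of $F$, so your argument is a lighter version of the same mechanism, with the advantage of making Lemma \ref{lem-neg} independent of \cite{DOSW}, at the cost of redoing estimates already available there.

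One small caveat: your justification of $m_R<1$ via ``the maximum principle for Riesz potentials'' is shaky, since for the kernel $|x-y|^{-s}$ viewed in $\R^{d+1}$ Frostman's maximum principle is only guaranteed when $d+1-s\leq2$, i.e.\ $s\geq d-1$, which \eqref{s-value} does not ensure. It is not needed: either use $m_R\leq\|\delta_y\|=1$ from Remark \ref{Rem-mass-loss}, which already gives $1+\gamma m_R\geq1+\gamma\geq0$ and, since $-\gamma>0$ and $\Lambda_R^*>0$, $H(R)>0$; or note directly that $U^{\om_R}(y)=\int(|x|^2+y_{d+1}^2)^{-s/2}d\om_R(x)<\int|x|^{-s}d\om_R(x)=F_R$, which with \eqref{mass-bal} gives the strict inequality $m_R<1$.
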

\begin{proof}
(i) was derived in the proof of \cite[Theorem 4.1]{DOSW}, except for the fact that, for $R<R_{\gamma}$, $\eta_{Q,R}$ is a positive measure. This fact can be proved in the same way we proved that $\eta_{Q,R_{\gamma}}$ is a positive measure, see the proof of \cite[Theorem 4.1]{DOSW}, where equation (5.21) there has to be replaced with
\begin{equation*}
H(R)=-\gamma\Lambda_{R}^{*}+(1+\gamma m_{R})c_{R}>0.  
\end{equation*}
For (ii), we have $1+\gamma m_{R}>0$ when $-1\leq\gamma$, and thus it is clear from (\ref{HR}) that $H(R)>0$ for all $R>0$. The fact that $\eta_{Q,R}$ is a positive measure 
follows from (\ref{sgn-meas}) since $-\gamma\geq0$ and $1+\gamma m_{R}>0$.
\end{proof}
\begin{theorem}
Let $\gamma_{-}<0$ be the unique root of $H(1)=0$, that is
\begin{equation}\label{gamma0}
\gamma_{-}=\frac{c_{1}}{\Lambda_{1}^{*}-m_{1}c_{1}}<-1.
\end{equation}
(i) For $\gamma\leq\gamma_{-}$, let $R_{\gamma}$ be such that $H(R_{\gamma})=0$. Then $R_{\gamma}\leq1$ and
\begin{equation}\label{neg-small}
S_{Q,\bar\B}=B_{R_{\gamma}}\quad\text{ and }\quad\om_{Q,\bar\B}=\eta_{Q,{R_{\gamma}}}.
\end{equation}
The density of the equilibrium measure $\om_{Q,\bar\B}$ vanishes on the boundary $S_{R_{\gamma}}$ of its support.
\\[5pt]
(ii) For $\gamma_{-}<\gamma<0$, 
\begin{equation}\label{neg-large}
S_{Q,\bar\B}=\bar\B\quad\text{ and }\quad\om_{Q,\bar\B}=\eta_{Q,1}.
\end{equation}
The density of the equilibrium measure $\om_{Q,\bar\B}$ behaves like $M/(1-|x|^{2})^{\alpha/2}$ as $|x|\to1^{-}$ near the boundary $\mathbb{S}$ of the unit ball $\bar\B$, where $M$ is a positive constant.
\end{theorem}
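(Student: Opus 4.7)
The overall plan is that (ii) follows by direct verification --- the signed equilibrium $\eta_{Q,1}$ turns out to be a positive probability measure fulfilling the Frostman conditions on $\bar\B$ --- while (i) reduces to recognizing $\eta_{Q,R_{\gamma}}$ as the restriction to $\bar\B$ of the whole-space equilibrium measure already determined in \cite{DOSW}.

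For (ii), I would first note that $\gamma\mapsto H(1)$ is affine and vanishes at $\gamma=\gamma_-$ by (\ref{gamma0}), with slope $m_1 c_1-\Lambda_1^*>0$: indeed $\gamma_-=c_1/(\Lambda_1^*-m_1c_1)<-1<0$ together with $c_1>0$ forces $\Lambda_1^*-m_1c_1<0$. Hence $H(1)>0$ throughout $\gamma_-<\gamma<0$. To see that $\eta_{Q,1}$ is then a positive measure, split into $-1\le\gamma<0$, handled by Lemma \ref{lem-neg}(ii), and $\gamma_-<\gamma<-1$, where $H(1)>0$ combined with the sign analysis in Lemma \ref{lem-neg}(i) forces $R_\gamma>1$, so that Lemma \ref{lem-neg}(i) applied at $R=1\le R_\gamma$ gives positivity. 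As a positive probability measure on $\bar\B$ satisfying $U^{\eta_{Q,1}}+Q=C_{Q,1}$ q.e.\ on $\bar\B$, $\eta_{Q,1}$ trivially fulfills (\ref{Frostman1})--(\ref{Frostman2}), so uniqueness of the weighted equilibrium yields $\om_{Q,\bar\B}=\eta_{Q,1}$ with $S_{Q,\bar\B}=\bar\B$. The boundary asymptotic $M/(1-|x|^2)^{\alpha/2}$ with $M=H(1)>0$ is then read directly from (\ref{sgn-meas})--(\ref{HR}).

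For (i), the hypothesis $\gamma\le\gamma_-$ together with the sign behavior of $H$ in Lemma \ref{lem-neg}(i) gives $R_\gamma\le 1$, so $B_{R_\gamma}\subset\bar\B$; Lemma \ref{lem-neg}(i) at $R=R_\gamma$ says $\eta_{Q,R_{\gamma}}$ is a positive probability measure on $B_{R_\gamma}$ with $U^{\eta_{Q,R_{\gamma}}}+Q\equiv C_{Q,R_\gamma}$ on $B_{R_\gamma}$ by construction. The only remaining step is the outer Frostman inequality $U^{\eta_{Q,R_{\gamma}}}+Q\ge C_{Q,R_\gamma}$ on $\bar\B\setminus B_{R_\gamma}$; I would obtain it by invoking the whole-space analysis of \cite{DOSW}, where $\om_{Q,\R^d}$ is identified with $\eta_{Q,R_{\gamma}}$ and the Frostman inequality is established on all of $\R^d$, hence on $\bar\B\setminus B_{R_\gamma}$ with the same constant. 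Uniqueness of the weighted equilibrium on $\bar\B$ then forces $\om_{Q,\bar\B}=\eta_{Q,R_{\gamma}}$ with $S_{Q,\bar\B}=B_{R_\gamma}$. The vanishing of the singular coefficient of $(R_\gamma^2-|x|^2)^{-\alpha/2}$ in the density at $\partial B_{R_\gamma}$ is exactly the condition $H(R_\gamma)=0$.

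The delicate point is the outer Frostman inequality in (i). Using (\ref{sgn-meas}) one can rewrite $U^{\eta_{Q,R_{\gamma}}}+Q-C_{Q,R_\gamma}$ on $\bar\B\setminus B_{R_\gamma}$ as $\gamma\bigl[U^{\delta_y}-U^{Bal(\delta_y,B_{R_\gamma})}\bigr]+(1+\gamma m_{R_\gamma})\bigl[U^{\om_{R_\gamma}}-F_{R_\gamma}\bigr]$; the first term is nonpositive ($\gamma<0$ times a nonnegative balayage defect), and the second is nonnegative since $1+\gamma m_{R_\gamma}<0$ --- forced by $H(R_\gamma)=0$ and $\gamma<0$ --- together with the Riesz maximum principle $U^{\om_{R_\gamma}}\le F_{R_\gamma}$. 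Both summands vanish on $\partial B_{R_\gamma}$, but their signs do not combine cleanly, which is why importing the inequality from the whole-space problem of \cite{DOSW} is preferable to a by-hand argument.
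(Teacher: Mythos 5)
Your proposal is correct in substance; part (ii) follows the paper's own argument, while part (i) is concluded by a genuinely different mechanism. For (ii) you do what the paper does: Lemma \ref{lem-neg}(ii) handles $-1\leq\gamma<0$, and for $\gamma_{-}<\gamma<-1$ the positivity of $H(1)$ forces $R_{\gamma}>1$, so Lemma \ref{lem-neg}(i) applied at $R=1$ gives positivity of $\eta_{Q,1}$, and the boundary behavior is read off from $H(1)>0$. For (i), the paper gets $R_{\gamma}\leq1$ from the monotonicity in $\gamma$ of the root of (\ref{sol}) and then identifies $\om_{Q,\bar\B}=\eta_{Q,R_{\gamma}}$ through the signed-equilibrium comparison, Lemma \ref{inclusion}(ii); you instead use the affine dependence of $H(1)$ on $\gamma$ to get $R_{\gamma}\leq1$ (equally valid), and then transfer the whole-space identification of \cite[Theorem 4.1]{DOSW}, where the Frostman inequalities for $\eta_{Q,R_{\gamma}}$ hold q.e.\ on all of $\R^{d}$, and restrict them to $\bar\B$ before invoking uniqueness. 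This is legitimate---a minimizer over $\R^{d}$ whose support lies in $\bar\B$ is a fortiori the minimizer on $\bar\B$---and it sidesteps the containment $S_{Q,\bar\B}\subset B_{R_{\gamma}}$ needed to apply Lemma \ref{inclusion}(ii); the price is a heavier reliance on the external result of \cite{DOSW}, whereas the paper's route stays within the signed-equilibrium framework of Section \ref{Prelim}. Your closing observation that a by-hand verification of the outer Frostman inequality does not combine cleanly is accurate.

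Two small repairs. First, the inequality $\gamma_{-}<-1$ in (\ref{gamma0}) is part of the statement, and you use it (to fix the sign of the slope of $\gamma\mapsto H(1)$) rather than prove it; it follows in one line from Lemma \ref{lem-neg}(ii), since $H(1)>0$ for every $-1\leq\gamma<0$ excludes a root of $H(1)=0$ in that range---this is exactly how the paper argues. Second, the vanishing of the density on $S_{R_{\gamma}}$ is not ``exactly'' the condition $H(R_{\gamma})=0$: that condition only annihilates the coefficient of the $(R_{\gamma}^{2}-|x|^{2})^{-\alpha/2}$ singularity, i.e.\ it shows the density is $o\bigl((R_{\gamma}^{2}-|x|^{2})^{-\alpha/2}\bigr)$ near the boundary, not that it tends to zero there; the actual vanishing requires the finer analysis of \cite[Theorem 4.1, item (ii)]{DOSW}, which the paper cites and which you may cite as well, since your argument already rests on that theorem.
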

\begin{proof}
The inequality in (\ref{gamma0}) follows from item (ii) of the previous lemma. For (i), $R_{\gamma}\leq1$ because it follows from (\ref{sol}) that $R_{\gamma}$ is an increasing function of $\gamma$ (because the left-hand side of (\ref{sol}) is an increasing function of $z$). Then, (\ref{neg-small}) follows from (ii) of Lemma \ref{inclusion} and (i) of Lemma \ref{lem-neg}. The vanishing of the density was proved in \cite[Theorem 4.1, item (ii)]{DOSW}.
We now prove (ii). For $\gamma_{-}<\gamma<-1$, $H(R)=0$ has a root $R_{\gamma}>1$ and (i) of the previous lemma shows that $\eta_{Q,1}$ is a positive measure, whence (\ref{neg-large}). The behavior of the density follows from the fact that $H(1)>0$. For $-1\leq\gamma<0$, the result is a consequence of (ii) of Lemma \ref{lem-neg}.
\end{proof}
\begin{figure}[htb]
\centering
  \includegraphics[scale=0.6]{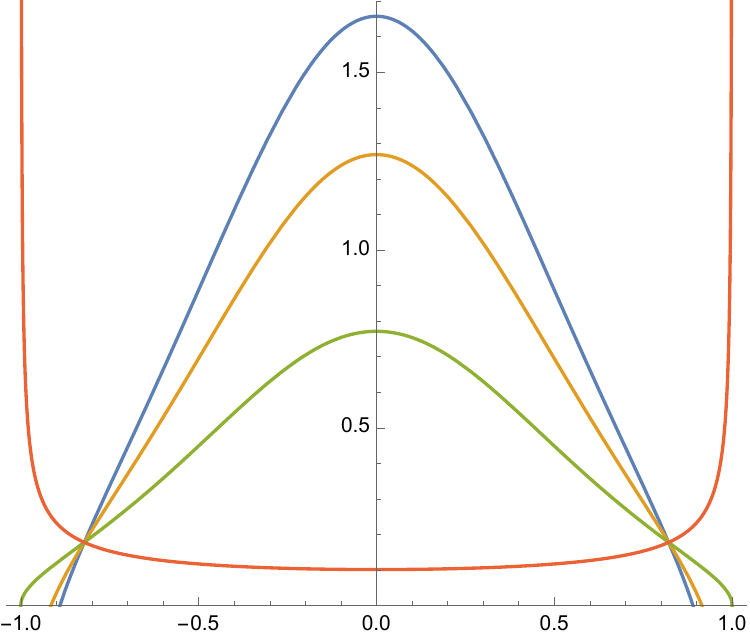}\hspace{.6cm}
\caption{Radial densities of the weighted Riesz equilibrium measures for the ball in $\R^{3}$ and for negative charges $\gamma\in\{-20,-15,\gamma_{-}=-8.612,0\}$ (resp.\ in blue, orange, green, red) when $d=3$ and $s=1$. The charge $\gamma$ is located at height $y_{4}=1$ above the ball.}
\end{figure}
\subsection{The case of a repulsive charge $\gam>0$}
\label{Pos}
%
%
%
%
%
%
%
In this section, we consider the case of an external field induced by a positive charge $\gamma>0$ at $y=(0,y_{d+1})$ above the $d$-dimensional ball $\bar\B$. We start with a lemma (that we will use only when $d=1$, see Section \ref{shell}).
\begin{lemma}\label{lem-quo-incr}
The function $(\eta'_{Q,1}/\om_{1}')(|x|)$ is a strictly increasing function of $|x|$.
\end{lemma}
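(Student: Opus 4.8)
The plan is to use the explicit representation of the signed equilibrium measure to reduce the statement to an elementary one-variable inequality. Since $\gam>0$, formula (\ref{sgn-meas}) gives, for $|x|<1$,
$$
\frac{\eta'_{Q,1}(x)}{\om_1'(x)}=-\gam\,\frac{Bal'(\delta_y,B_1)(x)}{\om_1'(x)}+(1+\gam m_1),
$$
with $m_1=\|Bal(\delta_y,B_1)\|$ a finite constant. Because the multiplier $-\gam$ is negative, it suffices to show that $Bal'(\delta_y,B_1)/\om_1'$ is a \emph{strictly decreasing} function of $|x|$ on $[0,1)$. Inserting the density (\ref{balayageBR}) of $Bal(\delta_y,B_1)$ from Lemma \ref{lem:balR}(i) (with $R=1$) and $\om_1'(x)=c_1(1-|x|^2)^{-\alpha/2}$ from (\ref{equilball}), the singular factors $(1-|x|^2)^{-\alpha/2}$ cancel; writing $u=|x|^2\in[0,1)$, $b=y_{d+1}^2>0$, $p=\alpha/2\in(0,1)$, $q=d-s/2>0$, one is reduced, up to an irrelevant positive constant, to showing that
$$
g(u):=\frac{(1-u)^{p}}{(u+b)^{q}}+\frac{\sin(p\pi)}{\pi}\int_0^{\infty}\frac{v^{p}\,dv}{(v+1+b)^{q}(v+1-u)}
$$
is strictly decreasing on $[0,1)$. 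The two summands are respectively decreasing and increasing in $u$, so a genuine cancellation has to be exhibited.

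For the key step I would substitute $\eps=1-u\in(0,1]$, so that $g'(u)=-\,dg/d\eps$ and it is enough to prove $dg/d\eps>0$. Differentiating under the integral sign and then rescaling the resulting integral by $v=\eps w$ leads to
$$
\frac{dg}{d\eps}=\eps^{p-1}\left[\frac{p}{(1+b-\eps)^{q}}+\frac{q\eps}{(1+b-\eps)^{q+1}}-\frac{\sin(p\pi)}{\pi}\int_0^{\infty}\frac{w^{p}\,dw}{(\eps w+1+b)^{q}(w+1)^{2}}\right].
$$
I would then use two elementary facts. First, $\eps w+1+b\geq 1+b>1+b-\eps>0$ for all $w\geq0$ and $\eps\in(0,1]$, so the integrand is strictly dominated by $(1+b-\eps)^{-q}\,w^{p}(w+1)^{-2}$. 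Second, $\int_0^{\infty}w^{p}(1+w)^{-2}\,dw=B(p+1,1-p)=p\pi/\sin(p\pi)$, by the Beta integral and Euler's reflection formula. Together these bound the last integral strictly by $p/(1+b-\eps)^{q}$, so the bracket exceeds $q\eps/(1+b-\eps)^{q+1}>0$; hence $dg/d\eps>0$, i.e.\ $g'<0$ on $[0,1)$, which is the claim.

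The delicate point, and the place where the argument could conceivably break, is that the decreasing first term of $g$ does \emph{not} dominate the derivative of the increasing integral term in a crude term-by-term way: as $u\to1^-$ both contribute a boundary singularity of the same order $(1-u)^{p-1}$ and, in fact, with the same leading coefficient. The rescaling $v=\eps w$ is exactly what converts this borderline comparison into the clean strict inequality above, and it is essential that the value $p\pi/\sin(p\pi)$ of the Beta integral precisely matches the prefactor $\sin(p\pi)/\pi$ inherited from (\ref{balayageBR}). The same computation in fact works for every $d$ in the Robin range, not only for $d=1$.
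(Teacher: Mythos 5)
Your proof is correct and follows essentially the same route as the paper: reduce to the monotonicity of $Bal'(\delta_y,B_1)/\om_1'$, cancel the $(1-|x|^2)^{-\alpha/2}$ singularities, drop the manifestly positive derivative of the denominator, compare the factor $(v+1+y_{d+1}^2)^{-q}$ with $(1+y_{d+1}^2-u)^{-q}$, and finish with the Beta integral $\int_0^\infty w^p(1+w)^{-2}\,dw=p\pi/\sin(p\pi)$. The only organizational difference is that you perform the rescaling $v=\eps w$ before the comparison, whereas the paper applies the comparison first and rescales only when evaluating the integral; the underlying estimates and the exact matching of the Beta constant are identical.
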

\begin{proof}
In view of (\ref{sgn-meas}), (\ref{equilball}), (\ref{balayageBR}), and the fact that $\gamma>0$, the derivative of $(\eta'_{Q,1}/\om_{1}')(|x|)$ has opposite sign to that of 
\begin{align*}
\frac{(1 - |x|^2)^{\alpha/2}}{(|x|^2 + y_{d+1}^2)^{d-s/2}}+
 \pi^{-1}\sin(\alpha\frac{\pi}{2})
 \int_0^{+\infty}\frac{v^{\alpha/2}\,dv}{(v+1+y^2_{d+1})^{d-s/2} (v+1-|x|^2)}.
\end{align*}
We show this is a decreasing function of $|x|$, or equivalently, that
$$
\frac{u^{\alpha/2}}{(1+y_{d+1}^2-u)^{d-s/2}}+
 \pi^{-1}\sin(\alpha\frac{\pi}{2})
 \int_0^{+\infty}\frac{v^{\alpha/2}\,dv}{(v+1+y^2_{d+1})^{d-s/2} (v+u)}
$$
is an increasing function of $u=1-|x|^{2}$. Taking the derivating, and omitting the derivative of the denominator in the first term, which is already positive, we obtain
$$
\frac{(\alpha/2)u^{\alpha/2-1}}{(1+y_{d+1}^2-u)^{d-s/2}}-
 \pi^{-1}\sin(\alpha\frac{\pi}{2})
 \int_0^{+\infty}\frac{v^{\alpha/2}\,dv}{(v+1+y^2_{d+1})^{d-s/2} (v+u)^{2}}.
$$
Since $1+y_{d+1}^{2}-u\leq1+y_{d+1}^{2}+v$, it is sufficient to show that
$$
(\alpha/2)u^{\alpha/2-1}-
 \pi^{-1}\sin(\alpha\frac{\pi}{2})
 \int_0^{+\infty}\frac{v^{\alpha/2}\,dv}{(v+u)^{2}}\geq0.
$$
But the above term equals
$$
(\alpha/2)u^{\alpha/2-1}-\pi^{-1}\sin(\alpha\frac{\pi}{2})
u^{\alpha/2-1}B(1+\frac\alpha2,1-\frac\alpha2)=
u^{\alpha/2-1}\left(\frac\alpha2-\frac{\Gamma(1+\frac\alpha2)\Gamma(1-\frac\alpha2)}
{\Gamma(\frac\alpha2)\Gamma(1-\frac\alpha2)}\right)=0.
$$
\end{proof}
\begin{theorem}\label{Thm-pos-small}
Let $\gamma_{+}$ be the unique root of $\eta_{Q,1}'(0)=0$, that is
$$
\gamma_{+}=\frac{c_{1}}{Bal'(\delta_{y},\bar\B)(0)-m_{1}c_{1}}>0.
$$
For $0\leq\gamma\leq\gamma_{+}$,
\begin{equation}\label{pos-small}
S_{Q,\bar\B}=\bar\B\quad\text{ and }\quad\om_{Q,\bar\B}=\eta_{Q,1}.
\end{equation}
The density of the equilibrium measure $\om_{Q,\bar\B}$ behaves like $M/(1-|x|^{2})^{\alpha/2}$ as $|x|\to1^{-}$ near the boundary $\mathbb{S}$ of its support, where $M$ is some positive constant.
\end{theorem}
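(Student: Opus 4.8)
The plan is to show that, for $0\le\gamma\le\gamma_{+}$, the signed equilibrium measure $\eta_{Q,1}$ of $\bar\B$ is in fact a \emph{positive} measure whose support is all of $\bar\B$; once this is in hand, applying Lemma~\ref{inclusion}(ii) with $\Sigma=\Sigma_{1}=\bar\B$ (the inclusion $S_{Q,\bar\B}\subset\bar\B$ being automatic, and $\eta_{Q,\Sigma_{1}}=\eta_{Q,1}$ positive) yields $\om_{Q,\bar\B}=\eta_{Q,1}$ and hence $S_{Q,\bar\B}=\bar\B$, which is (\ref{pos-small}). I would first note that $\eta_{Q,1}$ is absolutely continuous on all of $\bar\B$: by (\ref{sgn-meas}) it is a linear combination of $\om_{1}$ and $Bal(\delta_{y},\bar\B)$, both absolutely continuous on $\R^{d}$ with Lebesgue-integrable densities on $\bar\B$ by (\ref{equilball}) and (\ref{balayageBR}), using $\alpha<2$ so that $(1-|x|^{2})^{-\alpha/2}$ is integrable. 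Thus positivity of $\eta_{Q,1}$ is equivalent to $\eta'_{Q,1}\ge0$ a.e.\ on $\B$.

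The heart of the matter is the monotonicity already recorded in Lemma~\ref{lem-quo-incr}. Writing $g(|x|):=(\eta'_{Q,1}/\om_{1}')(|x|)$, this lemma says $g$ is strictly increasing on $[0,1)$ when $\gamma>0$ (and $g\equiv1$ when $\gamma=0$, since then $\eta_{Q,1}=\om_{1}$). Since $\om_{1}'(x)=c_{1}(1-|x|^{2})^{-\alpha/2}>0$ on $\B$ and $\eta'_{Q,1}(x)=\om_{1}'(x)\,g(|x|)$, the condition $\eta'_{Q,1}\ge0$ on $\B$ reduces to $g(0)\ge0$, i.e.\ to $\eta'_{Q,1}(0)\ge0$; moreover strict monotonicity then forces $\eta'_{Q,1}>0$ a.e.\ on $\B$, which is precisely what makes $S_{Q,\bar\B}=\bar\B$.

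It remains to verify $\eta'_{Q,1}(0)\ge0$ for $0\le\gamma\le\gamma_{+}$, and along the way that $\gamma_{+}>0$. From (\ref{sgn-meas}) and (\ref{equilball}) one has $\eta'_{Q,1}(0)=c_{1}-\gamma\bigl(Bal'(\delta_{y},\bar\B)(0)-m_{1}c_{1}\bigr)$, an affine function of $\gamma$ with value $c_{1}>0$ at $\gamma=0$ and root $\gamma_{+}$. To pin down the sign of the slope I would invoke the mass balance $\int_{\bar\B}\eta'_{Q,1}\,dx=m(\eta_{Q,1})=1=\int_{\bar\B}\om_{1}'\,dx$, which reads $\int_{\bar\B}\om_{1}'\,(g(|x|)-1)\,dx=0$; since $\om_{1}'>0$ on $\B$ and, for $\gamma>0$, $g-1$ is strictly increasing, this forces $g(0)<1$, i.e.\ $\eta'_{Q,1}(0)<c_{1}$. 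Hence $Bal'(\delta_{y},\bar\B)(0)-m_{1}c_{1}>0$ and $\gamma_{+}>0$, and $\eta'_{Q,1}(0)$ decreases from $c_{1}$ at $\gamma=0$ to $0$ at $\gamma=\gamma_{+}$, so $\eta'_{Q,1}(0)\ge0$ on $[0,\gamma_{+}]$. This yields $\eta_{Q,1}\ge0$ and completes the proof of (\ref{pos-small}).

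For the boundary behavior, note $(1-|x|^{2})^{\alpha/2}\om_{1}'(x)\equiv c_{1}$ while $(1-|x|^{2})^{\alpha/2}\eta'_{Q,1}(x)\to H(1)$ as $|x|\to1^{-}$, the finite constant in (\ref{HR}) at $R=1$; hence $g(|x|)\to H(1)/c_{1}$, and since $g$ is increasing with $g(0)\ge0$ (strictly increasing for $\gamma>0$) we get $H(1)/c_{1}=\lim_{|x|\to1^{-}}g(|x|)>0$, so $\eta'_{Q,1}(x)\sim M(1-|x|^{2})^{-\alpha/2}$ with $M=H(1)>0$ (for $\gamma=0$ this is immediate, $M=c_{1}$). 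As for difficulty: the one genuinely delicate ingredient, the monotonicity of $\eta'_{Q,1}/\om_{1}'$, is already supplied by Lemma~\ref{lem-quo-incr}; within the present argument the only point needing care is the sign of $Bal'(\delta_{y},\bar\B)(0)-m_{1}c_{1}$, and the mass-balance identity above settles it without having to compare the explicit formulas of Lemma~\ref{lem:balR} and (\ref{equilball}).
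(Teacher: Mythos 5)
Your proposal is correct and rests on the same key ingredient as the paper's proof, namely Lemma~\ref{lem-quo-incr}: since $g:=\eta'_{Q,1}/\om_1'$ is increasing, $\eta_{Q,1}\ge0$ on $\bar\B$ reduces to $g(0)\ge0$, i.e.\ $\eta'_{Q,1}(0)\ge0$, and then Lemma~\ref{inclusion}(ii) gives \eqref{pos-small}; and positivity of the limit of $g$ at $|x|\to1^-$ gives the stated boundary behavior.

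The one place you genuinely diverge from the paper is in pinning down the sign of the slope $Bal'(\delta_{y},\bar\B)(0)-m_{1}c_{1}$ (equivalently, that $\gamma_{+}>0$). The paper argues by continuity from the attractive case: for $\gamma\le0$ it has already shown $\eta_{Q,1}$ is a positive measure with positive density, so $\eta'_{Q,1}(0)>0$ there, and the affine function $\gamma\mapsto\eta'_{Q,1}(0)$ with a nonzero slope must then vanish only at some $\gamma_{+}>0$, forcing the slope to be positive. You instead use the mass-balance identity $\int_{\bar\B}\om_1'\,(g-1)\,dx=m(\eta_{Q,1})-m(\om_1)=0$: since $\om_1'>0$ and, for $\gamma>0$, $g-1$ is strictly increasing, the integrand must change sign, hence $g(0)<1$, i.e.\ $\eta'_{Q,1}(0)<c_1$, which immediately gives $Bal'(\delta_{y},\bar\B)(0)-m_1c_1>0$. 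This is a nice, self-contained alternative: it avoids importing the conclusion of Section~\ref{Neg} and only uses Lemma~\ref{lem-quo-incr} plus normalization. The paper's route is slightly shorter given the already-established negative-$\gamma$ results, but yours makes the sign determination logically independent of the attractive case. Both are correct, and the remainder of the two proofs (deducing that $\eta'_{Q,1}(0)$ decreases in $\gamma$, hence is nonnegative on $[0,\gamma_{+}]$, hence $\eta_{Q,1}\ge0$, and then the boundary estimate) are effectively identical.
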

\begin{proof}
When $\gamma\leq0$ we know that $\eta_{Q,1}$ is a positive measure with a positive density, implying $\eta_{Q,1}'(0)>0$. Hence we must have $\gamma_{+}>0$ (note that $Bal'(\delta_{y},B)(0)-m_{1}c_{1}\neq0$ since otherwise $\eta_{Q,1}'(0)$ would be independent of $\gamma$, which is not the case). In particular, $Bal'(\delta_{y},B)(0)-m_{1}c_{1}>0$ which shows, together with  (\ref{sgn-meas}) that $\eta_{Q,1}'(0)$ is a decreasing function of $\gamma$. Consequently, for $0\leq\gamma\leq\gamma_{+}$, $\eta_{Q,1}'(0)\geq0$ which implies, in view of Lemma \ref{lem-quo-incr}, that $\eta_{Q,1}$ is a positive measure, from which (\ref{pos-small}) follows. The behavior of the density near the boundary also follows from Lemma \ref{lem-quo-incr} since the quotient $(\eta_{Q,1}'/\om_{1}')(|x|)$ is positive for $x\neq0$, in particular as $|x|\to1^{-}$.
\end{proof}
\begin{figure}[htb]
\centering
  \includegraphics[scale=0.7]{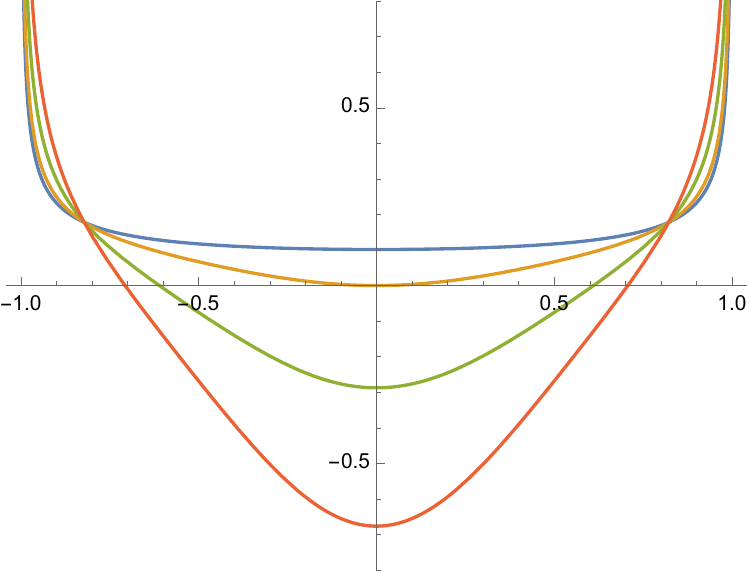}\hspace{.6cm}
\caption{Radial densities of the Riesz signed equilibrium measures for the ball in $\R^{3}$ and for positive charges $\gamma\in\{0,\gamma_{+}=1.302,5,10\}$ (resp.\ in blue, orange, green, red) when $d=3$ and $s=1$. The charge $\gamma$ is located at height $y_{4}=1$ above the ball.}
\end{figure}
When $\gamma_{+}<\gamma$, one has $\eta_{Q,1}'(0)<0$ and thus $S_{Q,\bar\B}$ does not contain the origin. Also, the outer boundary of $S_{Q,\bar\B}$ is the unit sphere $\mathbb{S}$. Indeed, expanding radially a measure contained in the interior of $\mathbb{S}$ can only make its weighted energy decrease (since $\gamma>0$).

We make the following conjecture :
\begin{Conjecture}[Shell Conjecture] \label{Conjec}
Assume $d\geq1$ and $\max(0,d-2)<s<d$. For a repulsive charge $\gamma>0$,
the support $S_{Q,\bar\B}$ is either the entire ball $\bar\B$ or a shell with inner boundary $S_{r}$, $0<r<1$, and outer boundary $\mathbb{S}$. 
\end{Conjecture}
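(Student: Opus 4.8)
I would treat only the case $d=1$ (so $0<s<1$); for $d\geq2$ the statement appears to lie beyond the present tools and should stay a conjecture. If $\gamma\leq\gamma_{+}$ the assertion is Theorem~\ref{Thm-pos-small}, so assume $\gamma>\gamma_{+}$. Since $Q$ is even on $[-1,1]$, the equilibrium measure is symmetric, and every set in the construction will be a symmetric \emph{shell} $A_{r}:=[-1,-r]\cup[r,1]$, $0\leq r<1$ (so $A_{0}=[-1,1]$). The plan is to run a modified iterated balayage algorithm producing an increasing sequence $0=r_{0}<r_{1}<r_{2}<\cdots<1$ with the properties: (a) $S_{Q,\bar\B}\subseteq A_{r_{n}}$ for every $n$; and (b) $\eta_{Q,A_{r_{n+1}}}=Bal(\eta_{Q,A_{r_{n}}},A_{r_{n+1}})+c_{n}\,\om_{A_{r_{n+1}}}$, where $c_{n}:=1-m(Bal(\eta_{Q,A_{r_{n}}},A_{r_{n+1}}))$ compensates the mass lost by the Riesz balayage; it is exactly this correction term that distinguishes the algorithm from its logarithmic ancestor \cite{KD,DK}. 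One then passes to the limit set $\Sigma_{\infty}=A_{r_{\infty}}$ with $r_{\infty}=\lim_{n}r_{n}$.

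\emph{Inductive step.} Assume $S_{Q,\bar\B}\subseteq A_{r_{n}}$; restricting the Frostman inequalities (\ref{Frostman1})--(\ref{Frostman2}) for $\om_{Q,\bar\B}$ to $A_{r_{n}}$ shows $\om_{Q,A_{r_{n}}}=\om_{Q,\bar\B}$. By linearity of (\ref{defsigned}), (\ref{balayage}) and $U^{\om_{A_{r_{n}}}}=F_{A_{r_{n}}}$ q.e.\ on $A_{r_{n}}$, the signed equilibrium measure exists and, exactly as in (\ref{sgn-meas}),
$$
\eta_{Q,A_{r_{n}}}=-\gamma\, Bal(\delta_{y},A_{r_{n}})+\bigl(1+\gamma\, m(Bal(\delta_{y},A_{r_{n}}))\bigr)\om_{A_{r_{n}}},
$$
with $1+\gamma\, m(Bal(\delta_{y},A_{r_{n}}))>0$ since $\gamma>0$. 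If $\eta_{Q,A_{r_{n}}}\geq0$, stop and set $\Sigma_{\infty}:=A_{r_{n}}$. Otherwise, using that $Bal'(\delta_{y},A_{r_{n}})$ and $\om'_{A_{r_{n}}}$ both blow up like $|x\mp r_{n}|^{-\alpha/2}$ at the inner endpoints and like $(1-x^{2})^{-\alpha/2}$ at the outer endpoints (Proposition~\ref{bal-abs-cont}(3) and (\ref{equilball})), together with the monotonicity of $\eta'_{Q,A_{r_{n}}}/\om'_{A_{r_{n}}}$ on each component of $A_{r_{n}}$ — for $n=0$ this is Lemma~\ref{lem-quo-incr} combined with $H(1)>0$ and $0\notin S_{Q,\bar\B}$, and for $n\geq1$ one needs its shell analogue — I would show $\{x\in A_{r_{n}}:\eta'_{Q,A_{r_{n}}}(x)<0\}=(-r_{n+1},-r_{n})\cup(r_{n},r_{n+1})$ for a unique $r_{n+1}\in(r_{n},1)$. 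Put $\Sigma_{n+1}:=A_{r_{n+1}}$. Then $\om_{Q,A_{r_{n}}}\leq\eta_{Q,A_{r_{n}}}^{+}$ (Lemma~\ref{inclusion}(i)) and the vanishing of $\eta_{Q,A_{r_{n}}}^{+}$ on the removed intervals give $S_{Q,\bar\B}=S_{Q,A_{r_{n}}}\subseteq S_{\eta_{Q,A_{r_{n}}}^{+}}\subseteq A_{r_{n+1}}$, i.e.\ (a) at level $n+1$; and (b) follows from $U^{Bal(\eta_{Q,A_{r_{n}}},A_{r_{n+1}})}=U^{\eta_{Q,A_{r_{n}}}}$ q.e.\ on $A_{r_{n+1}}$, the constancy of $U^{\om_{A_{r_{n+1}}}}$ there, mass conservation, and uniqueness of the signed equilibrium measure.

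\emph{Passage to the limit.} The sequence $(r_{n})$ is increasing and bounded away from $1$, since $\bigcap_{n}A_{r_{n}}\supseteq S_{Q,\bar\B}$ has positive capacity; hence $r_{\infty}<1$ and $\Sigma_{\infty}=A_{r_{\infty}}$ has positive capacity. By Lemma~\ref{lem-iba}, $Bal(\delta_{y},A_{r_{n}})\to Bal(\delta_{y},A_{r_{\infty}})$ weak-*, and, since $\om_{A_{r_{n}}}=Bal(\om_{\bar\B},A_{r_{n}})/\|Bal(\om_{\bar\B},A_{r_{n}})\|$, the same lemma gives $\om_{A_{r_{n}}}\to\om_{A_{r_{\infty}}}$ weak-*; the relevant masses converge because all the measures are carried by the compact set $\bar\B$. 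Therefore $\eta_{Q,A_{r_{n}}}\to\eta_{Q,A_{r_{\infty}}}$ weak-*. Each $\eta_{Q,A_{r_{n}}}$ is a positive measure on the interior of $A_{r_{\infty}}$ (which is contained in the interior of $A_{r_{n+1}}$), and a weak-* limit of measures nonnegative on a fixed open set is nonnegative there; combined with positivity of the limiting density up to $\pm1$ and its vanishing at the free boundary $\pm r_{\infty}$ (the expected smooth fit), this yields $\eta_{Q,A_{r_{\infty}}}\geq0$. Finally, Lemma~\ref{inclusion}(ii) with $\Sigma_{1}=\Sigma_{\infty}$ and $S_{Q,\bar\B}\subseteq\Sigma_{\infty}$ gives $\om_{Q,\bar\B}=\eta_{Q,A_{r_{\infty}}}$, so $S_{Q,\bar\B}=A_{r_{\infty}}$, a shell (or $\bar\B$ when $r_{\infty}=0$).

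\emph{Main obstacle.} The crux is the sign analysis driving the induction: proving that $\eta_{Q,A_{r_{n}}}$ is negative exactly on a symmetric neighborhood of the \emph{inner} boundary — never detached from it, never reaching the outer sphere. This needs sufficiently explicit control of the densities of $Bal(\delta_{y},A_{r})$ and $\om_{A_{r}}$ on the two-interval set $A_{r}$ (much heavier than the single-ball formulas (\ref{balayageBR}) and (\ref{equilball}); one anticipates incomplete integrals of hypergeometric type) together with the right strengthening of Lemma~\ref{lem-quo-incr} ensuring $\eta'_{Q,A_{r}}/\om'_{A_{r}}$ changes sign at most once on each component of $A_{r}$. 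A second, genuinely Riesz-specific difficulty is the mass bookkeeping: one must verify that $c_{n}$ in (b) is exactly the balayage defect and that, although $c_{n}\leq0$, subtracting $c_{n}\,\om_{A_{r_{n+1}}}$ does not re-create negativity near $\pm1$ — the shell analogue of the inequality $H(1)>0$ for $\gamma>0$ visible in (\ref{HR}) and (\ref{gamma0}) — so that the sign structure is transmitted faithfully along the iteration. For $d\geq2$ the scheme is formally identical with spherical shells, but neither the explicit densities nor the monotonicity it rests on are currently available, which is why the conjecture remains open in higher dimensions.
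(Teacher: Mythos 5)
Your high-level strategy — iterated balayage with a mass--correction step — is indeed what Section \ref{shell} of the paper does, and you correctly flag the two Riesz-specific obstacles: the mass defect of balayage and the need for a monotonicity result on two-interval sets. But the ``main obstacle'' paragraph is not incidental commentary; it \emph{is} the proof, and without it your induction has no engine. Concretely, your inductive step asserts that $\eta'_{Q,A_{r_n}}/\om'_{A_{r_n}}$ changes sign at most once on each component of $A_{r_n}$ ``for $n\ge1$ one needs its shell analogue'' — this is precisely the content you would have to establish, and your normalization is not the one that makes it true.

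The paper's key move is to normalize consistently by $\om'_{I}$ (the equilibrium density of the \emph{full} segment $I=[-1,1]$) throughout the iteration, never by $\om'_{A_{r}}$. The reason is twofold. First, the mass correction is taken as $-c_{k}\om_{1}$ (with $c_{k}>0$, chosen in Lemma \ref{lem-J} so that $m\bigl(J(\sigma_{k}-c_{k}\om_{1})\bigr)=m(\sigma)$), so that
$$
\frac{(\sigma_{k}-c_{k}\om_{1})'}{\om'_{1}}=\frac{\sigma'_{k}}{\om'_{1}}-c_{k},
$$
which is trivially still increasing in $|x|$; your correction $c_{n}\om_{A_{r_{n+1}}}$ does not preserve any quotient-monotonicity because the reference measure changes at each step. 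Second, and crucially, Lemma \ref{variat-bal} (built on the two-interval Lemma \ref{2seg} via a Kelvin transform) shows that if $\nu\ge0$ is supported in $(-r,r)$, then $Bal(\nu,K_{r})'/\om'_{I}$ is \emph{decreasing} in $|x|$ on $K_{r}$. Therefore, when one applies $J(\sigma)=\sigma^{+}-Bal(\sigma^{-},\supp\sigma^{+})$, the quotient of the new density by $\om'_{I}$ is the difference of an increasing function and a decreasing one, hence increasing — this is Lemma \ref{lem-J}, and it is exactly the ``right strengthening of Lemma \ref{lem-quo-incr}'' you say one would need, obtained without ever computing balayages on two-interval sets explicitly.

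A second structural difference: the paper never computes $\eta_{Q,A_{r_n}}$ at each stage. It iterates directly on the signed measures $\sigma_{0}=\eta_{Q,I}$, $\sigma_{k+1}=J(\sigma_{k}-c_{\sigma_{k}}\om_{1})$, which are \emph{not} claimed to be signed equilibrium measures on $K_{r_{k}}$; only at the limit does one identify $\sigma^{*}=T(\sigma)$ with $\om_{Q,K_{r^{*}}}$, via the strengthened containment Lemma \ref{Lem-KD-impr} (which improves Lemma \ref{inclusion}(i) to $\om_{Q,K_{r}}\le(\sigma-c_{\sigma}\om_{I})^{+}$) applied at every stage. Your step $S_{Q,\bar\B}\subseteq S_{\eta^{+}_{Q,A_{r_n}}}$ from Lemma \ref{inclusion}(i) is too weak to propagate: it controls the support relative to $\eta^{+}$, not relative to the mass-corrected positive part, so the containment $S_{Q,\bar\B}\subset K_{r_{k}}$ does not follow along your iteration. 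Finally, the passage to the limit in the paper does not rely on a ``smooth fit'' heuristic: because the densities $v_{k}$ are decreasing on $K_{r^{*}}$ (a byproduct of the same monotonicity), the pointwise limit exists and dominated convergence identifies the weak-$*$ limit, while finiteness of energy excludes $r^{*}=1$. Your worry about whether the $-c_{n}\om_{A_{r_{n+1}}}$ correction re-creates negativity near $\pm1$ is real under your normalization, but dissolves under the paper's: after subtracting $c_{k}\om_{1}$ the quotient by $\om'_{1}$ is still increasing and the total mass is $m(\sigma)>0$, so the density remains positive near $\pm1$.
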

Section \ref{shell} will be devoted to a proof of the conjecture in the case of a segment (one-dimensional case $d=1$).
\section{The Coulomb case $s=d-2$ for a ball in~$\R^{d}$, $d\geq2$}\label{Coulomb}
For the Coulomb case $s=d-2$ with $d\geq 2$ (with logarithmic interaction when $d=2$), 
 the external field $Q(x) = Q(|x|) = Q(r)$ is given by
\begin{equation}\label{QRiesz}
Q(r) = \,\frac{\gamma}{(r^2+h^2)^{d/2-1}},\quad\text{if }d\geq3,    \qquad
Q(r) = \frac\gamma2\log\frac{1}{(r^2+h^2)},\quad\text{if }d=2.
\end{equation}
We set
\begin{equation}\label{R0Riesz}
R_0 = \,\frac{h}{\sqrt{|\gamma|^{2/d} -1}}\,,\quad\gamma < -1,
\end{equation}
\begin{equation}\label{RRiesz}
\tilde\gamma=-(1+h^2)^{d/2},\qquad R = \begin{cases}
R_0,& \text{if }\gamma < \tilde\gamma,\\
1,& \text{if }\tilde\gamma \leq \gamma < 0\,,
\end{cases}
\end{equation}
and 
\begin{equation}\label{tauRiesz}
d\tau (x) = - d \gamma h^2\,\frac{r^{d-1}}{(r^2+h^2)^{1+d/2}}\,d r d\sigma_{d-1},\qquad
0\leq r \leq R\,,    
\end{equation}
where $\sigma_{d-1}$ denotes the normalized surface area measure of the $(d-1)$-dimensional unit sphere $\mathbb{S} \subset \R^d$. 

We have the following result. 
\begin{theorem}\label{thm:equildisk2}
Let $d\geq 2$ and $s=d-2$. For the equilibrium problem on the unit ball $\B \subset \R^d$ in the external field \eqref{QRiesz}, we have the following 
\begin{itemize}
    \item[(i)] If $\gamma \leq\tilde\gamma$, then $R\leq 1$ and
    $$d\mu_{Q,\B}(x) = d\tau (x),\quad x\in B_{R}.$$
    \item[(ii)] If $ \tilde\gamma< \gamma < 0$, then $R=1$ and
    $$d\mu_{Q,\B}(x) = d\tau (x) + \left(1 -\gamma/\tilde\gamma \right)d\sigma_{d-1}(x),\quad x\in \B.$$
    \item[(iii)] If $\gamma \geq 0$, then
    $$d\mu_{Q,\B}(x) = d\sigma_{d-1} (x),\quad x\in \mathbb{S}.$$
\end{itemize}
\end{theorem}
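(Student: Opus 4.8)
The plan is to exploit the defining feature of the Coulomb range $s=d-2$: there the Riesz kernel $|x-t|^{-s}$ is, up to a positive constant, the Newtonian kernel of $\R^d$ (the logarithmic kernel when $d=2$), so that potentials satisfy a Poisson equation and the variational inequalities \eqref{Frostman1}--\eqref{Frostman2} become assertions about harmonic and subharmonic functions. Write $\kappa_d=(d-2)A_{d-1}$ if $d\geq3$ and $\kappa_2=A_1=2\pi$, so that $\Delta U^{\nu}=-\kappa_d\,\nu$ for every measure $\nu$ with locally bounded density. A direct computation of $\Delta Q$ from \eqref{QRiesz} shows that the rotation-invariant measure $\nu_0$ on $\R^d$ with density $-\kappa_d^{-1}\Delta Q$ satisfies $Q=U^{\nu_0}$ (the difference $Q-U^{\nu_0}$ being harmonic on $\R^d$ and vanishing at infinity), has total mass $m(\nu_0)=\gamma$, and is such that the Lebesgue density of $-\nu_0$ is precisely the one in \eqref{tauRiesz}; thus $\tau=-\nu_0|_{\overline{B_R}}$. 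Consequently, for any radius $\rho>0$,
\[
U^{-\nu_0|_{\overline{B_\rho}}}+Q=U^{\nu_0|_{\R^d\setminus\overline{B_\rho}}},
\]
and the right-hand side, being the potential of a finite signed measure supported in $\R^d\setminus\overline{B_\rho}$ and rotation-invariant, is harmonic and radial on the open ball $B_\rho$, hence \emph{constant} there by the mean value property (and extends by continuity to $\overline{B_\rho}$). This is the one genuinely conceptual step, and the one I expect to be the crux: $U^\tau+Q$ is not merely harmonic inside $B_R$ but constant, which is forced by the rotational symmetry.

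Next one needs the relevant masses. With the substitution $r=h\tan\theta$,
\[
\int_0^R \frac{r^{d-1}}{(r^2+h^2)^{1+d/2}}\,dr=\frac{R^d}{d\,h^2\,(R^2+h^2)^{d/2}},\qquad\text{hence}\qquad m(\tau)=\frac{-\gamma\,R^d}{(R^2+h^2)^{d/2}},
\]
which equals $1$ exactly when $R=R_0$ (by \eqref{R0Riesz}) and equals $\gamma/\tilde\gamma$ when $R=1$ (by \eqref{RRiesz}); moreover $R_0\leq1$ if and only if $|\gamma|\geq(1+h^2)^{d/2}$, i.e.\ $\gamma\leq\tilde\gamma$, which justifies the dichotomy \eqref{RRiesz}. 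Since $\gamma<0$ in cases (i) and (ii), $\tau$ is a positive measure there; we also use that $U^{\sigma_{d-1}}$ is constant on $\bar\B$ (Newton's theorem; equivalently, $\sigma_{d-1}$ is the unweighted equilibrium measure of $\bar\B$, the interior density in \eqref{equilball} vanishing when $\alpha=2$).

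For case (i), $\gamma\leq\tilde\gamma$ gives $R=R_0\leq1$, and $\tau$ is a positive probability measure of finite energy with $\supp\tau=\overline{B_R}\subset\bar\B$ and $U^\tau+Q\equiv F_Q$ on $\overline{B_R}$, by the displayed identity with $\rho=R$. To check \eqref{Frostman1} on $\bar\B\setminus B_R$, put $\phi(r)=(U^\tau+Q)(x)$ for $|x|=r$: since $\tau$ has bounded density, $U^\tau\in C^1(\R^d)$, so $\phi\in C^1([0,\infty))$ with $\phi'(R)=0$ (because $\phi\equiv F_Q$ on $[0,R]$), while for $r>R$ one has $(r^{d-1}\phi'(r))'=r^{d-1}\Delta(U^\tau+Q)=r^{d-1}\Delta Q\geq0$ (there $U^\tau$ is harmonic and $\Delta Q\geq0$ since $\gamma<0$). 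Hence $\phi$ is nondecreasing on $[R,\infty)$, so $\phi\geq F_Q$ there; together with the equality on $\supp\tau$ this establishes \eqref{Frostman1}--\eqref{Frostman2}, and the characterization of $\mu_{Q,\B}$ by those inequalities gives $\mu_{Q,\B}=\tau$. For case (ii), $\tilde\gamma\leq\gamma<0$, take $\mu=\tau+(1-\gamma/\tilde\gamma)\,\sigma_{d-1}$ (with $R=1$): the two coefficients are $\geq0$ and $m(\mu)=\gamma/\tilde\gamma+(1-\gamma/\tilde\gamma)=1$, so $\mu$ is a positive probability measure of finite energy supported on $\bar\B$; since $U^\tau+Q$ is constant on $\bar\B$ (displayed identity with $\rho=1$) and $U^{\sigma_{d-1}}$ is constant on $\bar\B$, the weighted potential $U^\mu+Q$ is constant on $\bar\B=\supp\mu$, whence $\mu=\mu_{Q,\B}$.

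For case (iii), $\gamma\geq0$, take $\mu=\sigma_{d-1}$: on $\mathbb{S}=\supp\mu$ the function $U^{\sigma_{d-1}}+Q$ is constant ($U^{\sigma_{d-1}}$ constant on $\bar\B$, $Q$ constant on $\mathbb{S}$), while on $\bar\B$ one has $U^{\sigma_{d-1}}+Q\geq U^{\sigma_{d-1}}+Q\big|_{\mathbb{S}}$ because $Q$ is a nonincreasing function of $|x|$ when $\gamma\geq0$; hence \eqref{Frostman1}--\eqref{Frostman2} hold and $\mu_{Q,\B}=\sigma_{d-1}$ (intuitively, the repulsive charge drives all the mass to the boundary). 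Finally, the logarithmic case $d=2$ runs identically, with $\kappa_2=2\pi$, $U^{\sigma_1}\equiv0$ on $\bar\B$, the inequality $\Delta Q\geq0$ for $\gamma<0$ still valid, and the degenerate exponents in \eqref{R0Riesz}--\eqref{tauRiesz} read in the natural way. Apart from the conceptual point flagged above, what remains is routine: the $C^1$-regularity of $U^\tau$ used for $\phi'(R)=0$, the exact value of $\kappa_d$, and the elementary mass integral.
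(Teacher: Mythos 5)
Your proof is correct and takes a genuinely different route from the paper. You work directly from the PDE structure of the Coulomb kernel: you identify the external field $Q$ as the potential $U^{\nu_0}$ of the explicit radial measure $\nu_0=-\kappa_d^{-1}\Delta Q\,dx$, observe that $\tau=-\nu_0|_{\overline{B_R}}$, so that $U^\tau+Q=U^{\nu_0|_{\R^d\setminus\overline{B_R}}}$ is the radial harmonic (hence constant) potential of the exterior piece, and then verify Frostman's inequalities on the annulus $R\leq|x|\leq1$ via the radial ODE $(r^{d-1}\phi')'=r^{d-1}\Delta Q\geq0$. The paper instead handles case (i) by invoking a ready-made structure theorem for radial fields on $\R^d$ (Lemma~\ref{lem:abeyrad}, after \cite{ST} for $d=2$ and \cite{abey} for $d\geq3$), checking the monotonicity condition $(r^{d-1}Q'(r))'\geq0$, and then noting that the resulting unconstrained support $B_{R_0}$ lies inside $\bar\B$; for case (ii) it guesses $\tau+(1-\gamma/\tilde\gamma)\sigma_{d-1}$ and computes its potential through the explicit spherical-average identity \eqref{poissonk} (and its $d\geq3$ analogue), and case (iii) is treated exactly as you do. What your approach buys is self-containment and a conceptual explanation of why $\tau$ is the right candidate and why its weighted potential is constant (rotational symmetry plus harmonicity of the exterior potential), rather than drawing on the external lemma and on the known form $\min(1,\tfrac1{d-2})(r^{d-1}Q'(r))'\,dr\,d\sigma_{d-1}$ of the density; the paper's route is shorter precisely because that lemma already encapsulates both the support and the density. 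The arithmetic in your argument checks out: $m(\tau)=-\gamma R^d/(R^2+h^2)^{d/2}$ agrees with the paper's $Q'(1)/(d-2)=\gamma/\tilde\gamma$ at $R=1$, and $m(\tau)=1$ exactly at $R=R_0$, recovering \eqref{R0Riesz}--\eqref{RRiesz}.
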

\begin{remark}\label{rem:mix}
    Observe that in case (ii), the equilibrium measure is made of two components, a continuous part supported on $\B$ and a singular part supported on its boundary $\mathbb{S}$. That type of mixture  appears similarly in \cite{CSW}, also for a radial external field.
\end{remark}
\begin{remark}\label{unweighted}
Note that the threshold value between cases (ii) and (iii), that is when $\gamma = 0$, corresponds to the Robin (unweighted) equilibrium problem in $\B$, with solution $\sigma_{d-1}$.
\end{remark}
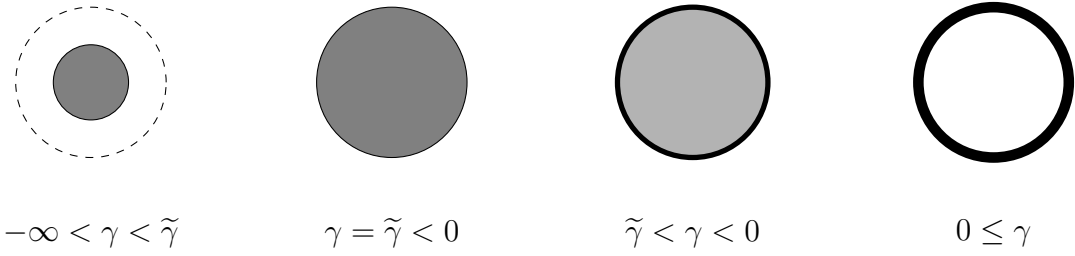
\begin{figure}
\begin{center}
\begin{tikzpicture}
\draw[fill=gray](-6,.0) circle (.5);
\draw[fill=none,dashed](-6,.0) circle (1);
\draw[fill=gray](-2,.0) circle (1);
\draw[fill=gray!60,line width=2pt](2,.0) circle (1);
\draw[fill=none,line width=4pt](6,.0) circle (1);
\draw (-6,-2) node {$-\infty<\gamma<\tilde\gamma$};
\draw (-2,-2) node {$\gamma=\tilde\gamma<0$};
\draw (2,-2) node {$\tilde\gamma<\gamma<0$};
\draw (6,-2) node {$0\leq\gamma$};
\end{tikzpicture}
\caption{Evolution of the equilibrium measure $\mu_{Q,\B}$ as described in Theorem \ref{thm:equildisk2}. For $\gamma<\tilde\gamma$, a large attractive charge, $\mu_{Q,\B}$ is supported on a ball of radius $<1$, which grows with $\gamma$. When $\gamma=\tilde\gamma$, the radius becomes equal to 1. For $\tilde\gamma<\gamma<0$, $\mu_{Q,\B}$ is a combination of a measure on $\B$ and a multiple of the uniform measure on the sphere $\mathbb{S}$. When $\gamma=0$, all of the volume measure has moved to the sphere, and $\mu_{Q,\B}$ is just the normalized uniform measure on $\mathbb{S}$. Then, for a positive charge $\gamma>0$, $\mu_{Q,\B}$ remains unchanged.}
\end{center}
\end{figure}
%
We first state a preliminary result, which holds true in the case of Coulomb interaction, see \cite[Theorem IV.6.1]{ST} for $d=2$, and \cite[Proposition 2.13]{abey} for $d\geq3$.
\begin{lemma}\label{lem:abeyrad}
Let $\Sigma = \R^d$ and let $Q$ be a radial external field, $Q(x) =  Q(|x|) = Q(r)$ which has an absolutely continuous derivative and satisfies 
\begin{equation}\label{cond-adm-2}
\lim_{r\to\infty}Q(r)=+\infty\quad\text{if }d\geq3,\qquad
\lim_{r\to\infty}Q(r)-\log r=+\infty\quad\text{if }d=2.
\end{equation}
Assume that one of the following conditions holds:
\begin{itemize}
    \item [(a)] $r^{d-1}Q'(r)$ is increasing on $(0,\infty)$,
    \item [(b)] $Q(r)$ is convex on $(0,\infty)$.
\end{itemize}
Let $r_0 \geq 0$ be the smallest number such that $Q'(r)>0$ for $r>r_0$, and $R_0 > 0$ the smallest solution of the equation 
$$R_0^{d-1}Q'(R_0) = 
\begin{cases}
d-2,&\text{if }d\geq3,
\\
1,&\text{if }d=2.
\end{cases}
$$ 
Then, the support of the equilibrium measure $\mu_{Q,\R^d}$ is given by
$$S_{Q,\R^d} = \,\{x\in \R^d\,: \,r_0 \leq |x| \leq R_0\}$$
and its density by
$$d\mu_{Q,\R^d}(x) = \min(1,\frac{1}{d-2})(r^{d-1} Q'(r))'\,dr\,d\sigma_{d-1},$$
where $\sigma_{d-1}$ stands for the normalized surface measure of the unit sphere $\mathbb{S} \subset \R^d$.   
\end{lemma}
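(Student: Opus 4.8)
The plan is to reduce the equilibrium problem for $\Sigma=\R^d$ to a one-dimensional weighted equilibrium problem on $[0,\infty)$ via rotational symmetry and Newton's theorem, and then to solve the latter explicitly using the Frostman conditions \eqref{Frostman1}--\eqref{Frostman2}. Since $\R^d$ and $Q$ are rotation invariant and $\mu_{Q,\R^d}$ is unique, $\mu_{Q,\R^d}$ is rotation invariant, hence disintegrates as $\mu_{Q,\R^d}=\int_0^\infty\sigma_\rho\,d\nu(\rho)$, where $\sigma_\rho$ is the normalized uniform measure on the sphere of radius $\rho$ and $\nu$ is a probability measure on $[0,\infty)$. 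Newton's theorem (valid for the Coulomb kernel $s=d-2$) gives $U^{\sigma_\rho}(x)=\max(|x|,\rho)^{2-d}$ for $d\geq3$ (and $-\log\max(|x|,\rho)$ for $d=2$), so $U^{\mu_{Q,\R^d}}(x)=V^\nu(|x|)$ with $V^\nu(t):=\int_0^\infty\max(t,\rho)^{2-d}\,d\nu(\rho)$, and $I_Q(\mu)=\iint\max(t,\rho)^{2-d}\,d\nu(t)\,d\nu(\rho)+2\int Q(\rho)\,d\nu(\rho)$ for every rotation-invariant $\mu=\int\sigma_\rho\,d\nu(\rho)$. Thus $\nu$ is the equilibrium measure of the one-dimensional problem on $[0,\infty)$ with kernel $\max(t,\rho)^{2-d}$ and field $Q$, and its support and density determine those of $\mu_{Q,\R^d}$. (Equivalently, the substitution $u=\rho^{2-d}$ turns this kernel into $\min(u,u')$, for which the analysis below is classical.)

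To find the candidate, assume the support of $\nu$ is an interval $[r_0,R_0]$ on which $\nu$ is absolutely continuous. From $\frac{\partial}{\partial t}\max(t,\rho)^{2-d}=(2-d)t^{1-d}\mathbf{1}_{\{t>\rho\}}$ we get $\frac{d}{dt}V^\nu(t)=(2-d)t^{1-d}\nu([0,t])$, and \eqref{Frostman2} forces $(V^\nu)'(t)+Q'(t)=0$ on $[r_0,R_0]$, i.e.\ $\nu([0,t])=t^{d-1}Q'(t)/(d-2)$ (resp.\ $tQ'(t)$ when $d=2$). Differentiating, $\nu$ has density $g(t)=\min(1,\frac1{d-2})(t^{d-1}Q'(t))'$; the condition $\nu([0,r_0])=0$ identifies $r_0$ as the smallest number beyond which $Q'>0$, and $\nu([0,R_0])=1$ identifies $R_0$ as the smallest root of $R_0^{d-1}Q'(R_0)=d-2$ (resp.\ $=1$). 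This produces the candidate $\nu^*:=g(t)\mathbf{1}_{[r_0,R_0]}(t)\,dt$ and hence $\mu^*:=\int\sigma_\rho\,d\nu^*(\rho)$, with the support and density announced in the statement.

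It remains to verify that $\mu^*$ (equivalently $\nu^*$) satisfies the full Frostman conditions and to conclude by uniqueness of the equilibrium measure. That $\nu^*$ is a probability measure follows from the definitions of $r_0,R_0$, and its finite energy is readily checked. Nonnegativity of $g$ holds under hypothesis (a) directly, and under hypothesis (b) because $(t^{d-1}Q'(t))'=(d-1)t^{d-2}Q'(t)+t^{d-1}Q''(t)\geq0$ on $[r_0,R_0]$, where $Q'\geq0$ and $Q''\geq0$. By construction $V^{\nu^*}+Q\equiv F$ on $[r_0,R_0]$. For $t>R_0$ one has $(V^{\nu^*}+Q)'(t)=t^{1-d}\bigl(t^{d-1}Q'(t)-(d-2)\bigr)$, which vanishes at $R_0$ and stays nonnegative afterwards---because $t^{d-1}Q'(t)$ is increasing under (a), or because $t\mapsto Q'(t)-(d-2)t^{1-d}$ has nonnegative derivative $Q''(t)+(d-1)(d-2)t^{-d}$ under (b)---so $V^{\nu^*}+Q\geq F$ there. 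For $0<t<r_0$ (possible only under (b)) one has $(V^{\nu^*}+Q)'(t)=Q'(t)\leq0$, so $V^{\nu^*}+Q$ decreases to $F$ at $r_0$ and is $\geq F$. Hence $\nu^*$ satisfies \eqref{Frostman1}--\eqref{Frostman2}, so $\nu^*=\nu$, and translating back through the disintegration gives Lemma \ref{lem:abeyrad}.

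The main obstacle is the reduction in the first paragraph: rotational invariance (symmetry plus uniqueness) is standard, but one must ensure the disintegration and the passage to the kernel $\max(t,\rho)^{2-d}$ are legitimate, and above all that the equilibrium measure exists at all on the noncompact set $\R^d$---this is precisely where the growth condition \eqref{cond-adm-2} enters, to prevent escape of mass to infinity---while the logarithmic case $d=2$ must be carried separately throughout. Once the problem is one-dimensional the rest is routine, and it is exactly there that hypotheses (a) and (b) are used (each suffices, and they are not equivalent): in the nonnegativity of $g$ and in the exterior Frostman inequality beyond $R_0$.
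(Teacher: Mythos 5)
The paper does not give its own proof of Lemma \ref{lem:abeyrad}: it simply cites Saff--Totik \cite[Theorem IV.6.1]{ST} for $d=2$ and L\'opez Garc\'ia \cite[Proposition 2.13]{abey} for $d\geq 3$. Your argument is the standard one underlying those references (rotational invariance, Newton's spherical-mean formula to collapse the problem to the radial line, differentiation of the Frostman equality to get the cumulative distribution $\nu([0,t])=\min(1,\tfrac{1}{d-2})\,t^{d-1}Q'(t)$ on the support, and then the monotonicity of $t\mapsto t^{d-1}Q'(t)-\min(1,d-2)$ to check the Frostman inequality off the support), so it is correct and in the same spirit as the cited sources. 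Two small points worth tightening if you were writing this out in full: existence of $\mu_{Q,\R^d}$ on the noncompact $\Sigma=\R^d$ under \eqref{cond-adm-2} deserves an explicit citation (e.g.\ \cite[Theorem I.1.3]{ST} for $d=2$, and the corresponding Riesz version), and your parenthetical ``(possible only under (b))'' for the case $t<r_0$ is not quite right --- $r_0>0$ can occur under (a) as well, but the same argument goes through since $t^{d-1}Q'(t)$ is then increasing and vanishes at $r_0$, forcing $Q'(t)\le 0$ on $(0,r_0)$.
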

\begin{remark} The conditions of admissibility (\ref{cond-adm-2}) could be substituted, respectively,
with the weaker conditions,
$$
\lim_{r\rightarrow \infty}\,r^{d-2}\,(Q(r) - Q(\infty)) \leq -1, \quad\text{if }d\geq3,\qquad
\liminf_{r\to\infty}Q(r)-\log r>-\infty\quad\text{if }d=2,
$$
see \cite[Theorem 2.4]{DOSW} and \cite[Theorem 3.4]{BLW}.
\end{remark}
\begin{proof}[Proof of Theorem \ref{thm:equildisk2}]
We first consider the logarithmic case, that is $d=2$ and $s=0$. 
Note that the condition of admissibility (\ref{cond-adm-2}) in the previous lemma is satisfied only when 
$\gamma < -1$.
 For $\gamma = -1$, the equilibrium measure in the whole of $\C$ still exists (weakly admissible case) but its support is then unbounded. 
We have that
$$r Q'(r) = -\gamma \,\frac{r^2}{r^2+h^2}\,,\qquad(r Q'(r))' = -\gamma \,\frac{2h^2r}{(r^2+h^2)^2}\,,$$
so that the function $r Q'(r)$ is increasing if $\gamma <0$ (an attractive charge). Hence condition (a) in Lemma \ref{lem:abeyrad} holds and an easy computation yields
$$r_0 = 0\,,\; R_0 = \,\frac{h}{\sqrt{-(1+\gamma})}\,=\,\frac{h}{\sqrt{|\gamma|-1}}.$$
Thus, for $\gamma<-1$, the support $S_{\mu_Q,\C}$ is a ball $B_{R_0}$ centered at the origin and with radius $R_0$. Therefore, for $\gamma \leq -1-h^2 < -1$ we have $R_0 < 1$ and $R=R_0$, so that $\mu_{Q,\C} = \tau$. Therefore, for the equilibrium problem in $\bar\D$, we have 
$S_{Q,\overline{\D}} = B_{R_0}$, and $\mu_{Q,\overline{\D}}=\tau$. Hence, part (i) is proved.

Next, consider case (ii), where $-1-h^2 < \gamma < 0$. 
The measure $\tau$ in \eqref{tauRiesz}, now supported on $\bar\D$, appears to be still useful in finding the density of $\mu_{Q,\overline{\D}}$. Indeed, 
$$\tau(\bar\D) = \frac{-\gamma h^2}{\pi}\,\int_{-\pi}^{\pi}\,d\theta \; \int_0^1\,\frac{r}{(r^2+h^2)^2\,dr}\,= \,-\frac{\gamma}{h^2+1}<1$$
and thus it is not a probability measure.
However, 
$$
\nu = \tau + (1+ \frac{\gamma}{1+h^2})\,\sigma
$$ is a probability measure, with potential
$$U^{\nu}(z) = \frac{1}{2\pi}\int_{\D}\,\log \frac{1}{|z-y|}\,d\nu(y).$$
Using polar coordinates and the well known identity (see e.g. \cite[(0.5.5)]{ST})
\begin{equation}\label{poissonk}
\frac{1}{2\pi}\,\int_{-\pi}^{\pi}\,\log \frac{1}{|z-re^{i\theta}|}d\theta=\,\begin{cases}
    - \log r,\; & |z|\leq r, \\
    - \log |z|,\; & |z|>r,
\end{cases}    
\end{equation}
we get that
$$U^{\tau}(z) = -\log |z|\,\int_0^{|z|}\,(r Q'(r))' dr\, - \int_{|z|}^1\,(r Q'(r))'\,\log r\, dr\,,$$
and thus, after straightforward computations, it holds
$U^{\nu}(z) = U^{\tau}(z)=Q(1) - Q(z)$, i.e.
$$U^{\nu}(z) + Q(z) = Q(1) = \,-\gamma \log (1+h^2),$$
which is constant. Frostman's identity thus holds true, which proves part (ii).

Finally, as for the case (iii), where $\gamma > 0$, it is easy to check that
$$U^{\sigma}(z) + Q(z) = \begin{cases}
    Q(1),\; & |z| = 1, \\ 
    Q(z),\; & |z| < 1.
\end{cases}$$
Since, for $\gamma > 0$, $Q(r)$ is a decreasing function of $r$, it proves that Frostman's inequalities hold true.

The proof when $d\geq3$ is completely similar to the case $d=2$. 
We only note that,
in part (ii), the total mass of $\tau$ in \eqref{tauRiesz} is given by
$$\tau (B_{1}) = \frac{1}{d-2}\,\int_0^1\,(r^{d-1}\,Q'(r))'dr =\,\frac{Q'(1)}{d-2}=
\gamma/\tilde\gamma, $$
and \eqref{poissonk} can be replaced with the identity \cite[p.21]{abey},
$$\int_{S^{d-1}}\,\frac{1}{|r {y} - x|^{d-2}}\,d\sigma_{d-1}({y})= \begin{cases}
    {r^{2-d}},\; & |x|\leq r, \\[5pt]
    {|x|^{2-d}},\; & |x|>r,
\end{cases}$$
to compute the potential of $\tau$.
\end{proof}
\section{The logarithmic case for the segment $I=[-1,1]$}\label{log-seg}
The signed equilibrium measure on $I_{r}=[-r,r]$, $r>0$, is
$$
\eta_{Q,{r}}=-\gamma Bal(\delta_{y},I_{r})+(1+\gamma)\om_{r},
$$
with density 
\begin{equation}\label{bal-Ir}
\eta_{Q,{r}}'(x)=\left(1+\gamma
-\frac{\gamma y_{2} \sqrt{y_{2}^{2}+r^{2}}}{\left(x^{2}+y_{2}^{2}\right)}\right)
\frac{1}{\pi\sqrt{r^{2}-x^{2}}},\qquad x\in I_{r},
\end{equation}
where we have used \cite[p.958]{BDT} for an explicit expression of the density of the balayage $Bal(\delta_{y},I_{r})$. For $r=1$, the first factor vanishes when
$$
x^{2}=\frac{\gamma y_{2}\sqrt{y_{2}^{2}+1}}{1+\gamma}-y_{2}^{2}.
$$
Hence, the density $\eta_{Q,I}'(x)$ vanishes in $[-1,1]$ if and only if
\begin{equation}\label{ineq-I}
0\leq\frac{\gamma y_{2}\sqrt{y_{2}^{2}+1}}{1+\gamma}-y_{2}^{2}\leq1.
\end{equation}
\subsection{Case of an attractive charge $\gam\leq0$}
1) If $\gamma<-1$, (\ref{ineq-I}) becomes
$$
0\geq{\gamma y_{2}\sqrt{y_{2}^{2}+1}}-(1+\gamma)y_{2}^{2}\geq1+\gamma
$$
or, equivalently
\begin{multline*}
\begin{cases}
y_{2}^{2}\geq\gamma(y_{2}\sqrt{y_{2}^{2}+1}-y_{2}^{2}),
\\[10pt]
\gamma(y_{2}\sqrt{y_{2}^{2}+1}-y_{2}^{2}-1)\geq1+y_{2}^{2}
\end{cases}
\\\quad\iff\quad
\gamma\leq\frac{y_{2}}{\sqrt{y_{2}^{2}+1}-y_{2}}\quad\text{and}\quad
\gamma\leq\frac{\sqrt{1+y_{2}^{2}}}{y_{2}-\sqrt{y_{2}^{2}+1}}.
\end{multline*}
The first inequality is always satisfied, and the last quotient is less than $-1$. Hence, if
\begin{equation}\label{eq-gam-moins}
\gamma<\gamma_{-}:=\frac{\sqrt{1+y_{2}^{2}}}{y_{2}-\sqrt{y_{2}^{2}+1}}<-1,
\end{equation}
the density $\eta_{Q,I}'$ is positive on $[-r^{*},r^{*}]$, and negative on $I\setminus[-r^{*},r^{*}]$, where $0<r^{*}\leq1$ satisfies
\begin{equation}\label{eq-r*}
(r^{*})^{2}=\frac{\gamma y_{2}\sqrt{y_{2}^{2}+1}}{1+\gamma}-y_{2}^{2}.
\end{equation}
Moreover, $\eta_{Q,r^{*}}$ is a positive measure because the first factor in (\ref{bal-Ir}) is decreasing with $r$. Hence, $\om_{Q,I}=\eta_{Q,r^{*}}$.
\\[5pt]
2) If $\gamma_{-}\leq\gamma<-1$, $\eta_{Q,I}$ is a positive measure and $\om_{Q,I}=\eta_{Q,I}$.
\\[5pt]
3) If $\gamma=-1$, it follows from (\ref{bal-Ir}) that
$$
\eta'_{Q,I}(x)=\frac{y_{2}\sqrt{1+y_{2}^{2}}}{\pi\sqrt{1-x^{2}}(x^{2}+y_{2}^{2})}>0,\qquad x\in(-1,1),
$$
which implies that $\om_{Q,I}=\eta_{Q,I}$.
\\[5pt]
4) If $-1<\gamma<0$, (\ref{ineq-I}) becomes
\begin{multline}\label{2-ineq}
0\leq{\gamma y_{2}\sqrt{y_{2}^{2}+1}}-(1+\gamma)y_{2}^{2}\leq1+\gamma
\quad\iff\quad
\\
\gamma\geq\frac{y_{2}}{\sqrt{y_{2}^{2}+1}-y_{2}}=y_{2}(\sqrt{y_{2}^{2}+1}+y_{2})\quad\text{and}\quad
\gamma\geq\frac{\sqrt{1+y_{2}^{2}}}{y_{2}-\sqrt{y_{2}^{2}+1}}=-(1+y_{2}^{2}+y_{2}\sqrt{y_{2}^{2}+1}).
\end{multline}
The first inequality is never satisfied, which implies that $\eta_{Q,I}$ is a positive measure and thus
$\om_{Q,I}=\eta_{Q,I}$ again. 

We can summarize the findings above in the following Theorem.
\begin{theorem}
Let $\gamma$ be a negative charge.\\
1) If $\gamma<\gamma_{-}$ (given by (\ref{eq-gam-moins})) then $\om_{Q,I}$ has support $[-r^{*},r^{*}]$ with $r^{*}$ given by (\ref{eq-r*}), and the density given by (\ref{bal-Ir}) with $r=r^{*}$.
\\
2) If $\gamma_{-}\leq\gamma<0$, then $\om_{Q,I}$ has support $[-1,1]$ and the density still given by (\ref{bal-Ir}) with $r=1$.
\end{theorem}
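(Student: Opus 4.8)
The plan is to package the case analysis of the paragraphs 1)--4) above around two ingredients already in place: the closed-form density (\ref{bal-Ir}) of the signed equilibrium measure $\eta_{Q,r}$ on the subsegment $I_r=[-r,r]$, and the comparison principles of Lemma \ref{inclusion}. Write $\eta_{Q,r}'(x)=\frac{g_r(x)}{\pi\sqrt{r^2-x^2}}$ with bracket $g_r(x)=1+\gamma-\frac{\gamma y_2\sqrt{y_2^2+r^2}}{x^2+y_2^2}$. Since $\gamma<0$, this bracket is positive at $x=0$ and strictly decreasing in $|x|$ on $I_r$; consequently $\eta_{Q,r}$ is a positive measure precisely when $g_r(\pm r)\ge0$, and when $g_r(\pm r)=0$ its density decays like a square root at $\pm r$. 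Working out when $g_1$ vanishes inside $[-1,1]$ --- which is exactly the inequality (\ref{ineq-I}) --- isolates the critical charge $\gamma_-$ of (\ref{eq-gam-moins}): for $\gamma_-\le\gamma<0$ one has $g_1\ge0$ throughout $[-1,1]$, whereas for $\gamma<\gamma_-$ the bracket vanishes at $\pm r^*$, with $g_1>0$ on $(-r^*,r^*)$ and $g_1<0$ on $[-1,1]\setminus[-r^*,r^*]$.

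For $\gamma_-\le\gamma<0$ (which bundles the sub-cases 2)--4)), the measure $\eta_{Q,1}$ is a finite positive probability measure supported on $I$ with $U^{\eta_{Q,1}}+Q$ constant q.e.\ on $I$; by the Frostman characterization (\ref{Frostman1})--(\ref{Frostman2}) it is therefore the weighted equilibrium measure, so $\om_{Q,I}=\eta_{Q,1}$, with support $[-1,1]$ and density (\ref{bal-Ir}) at $r=1$. No shrinking occurs here.

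For $\gamma<\gamma_-$ (sub-case 1), where the support must shrink, I would argue as follows. A priori, Lemma \ref{inclusion}(i) applied to $\eta_{Q,I}$ gives $\om_{Q,I}\le\eta_{Q,I}^+$, so the support of $\om_{Q,I}$ is contained in $S_{\eta_{Q,I}^+}=[-r^*,r^*]$, the subsegment on which $g_1\ge0$. On that subsegment $\om_{Q,I}$ must coincide with the signed equilibrium measure of the segment, so I would locate the true radius $\rho\le r^*$ by requiring $\eta_{Q,\rho}$ to be a positive measure with a soft edge, i.e.\ $g_\rho(\pm\rho)=0$; this determines $\rho$ explicitly, and for the stated conclusion one needs $\rho$ to be the $r^*$ of (\ref{eq-r*}). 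Granting that $\eta_{Q,r^*}\ge0$, one then invokes Lemma \ref{inclusion}(ii) with $\Sigma_1=[-r^*,r^*]$ --- legitimate since the support of $\om_{Q,I}$ lies in $[-r^*,r^*]$ and $\eta_{Q,r^*}$ is positive --- to conclude $\om_{Q,I}=\eta_{Q,r^*}$, with support $[-r^*,r^*]$ and density (\ref{bal-Ir}) at $r=r^*$, vanishing like a square root at $\pm r^*$. Combining the two cases yields the dichotomy in the statement.

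The step I expect to be the main obstacle is, in sub-case 1), the positivity of $\eta_{Q,r^*}$ together with the correct location of the shrinkage radius: the monotonicity of the bracket $g_r$ in $r$ that one wants to exploit has to be handled with care, because for $\gamma<0$ the term $\frac{\gamma y_2\sqrt{y_2^2+r^2}}{x^2+y_2^2}$ is itself monotone in $r$ in the direction opposite to what a naive comparison with $g_1$ would want. If a direct comparison does not close the argument, the robust route is to verify the outer Frostman inequality $U^{\eta_{Q,r^*}}(x)+Q(x)\ge C_{Q,r^*}$ for $r^*<|x|\le1$ by hand, decomposing $U^{\eta_{Q,r^*}}+Q=\gamma\bigl(U^{\delta_y}-U^{Bal(\delta_y,I_{r^*})}\bigr)+(1+\gamma)U^{\om_{r^*}}$ and using $U^{Bal(\delta_y,I_{r^*})}\le U^{\delta_y}$ together with the explicit equilibrium potential of $\om_{r^*}$; that same computation re-derives the value of $\rho$ and so confirms (\ref{eq-r*}).
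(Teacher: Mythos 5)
Your plan follows essentially the same route as the paper: read positivity of $\eta_{Q,1}$ off the closed-form density (\ref{bal-Ir}), use Lemma \ref{inclusion} to localize the support, and in case 1) produce the equilibrium measure on the shrunken segment as the signed equilibrium measure there. You have also correctly spotted the subtle point: for $\gamma<0$ the bracket $g_r(x)=1+\gamma-\frac{\gamma y_2\sqrt{y_2^2+r^2}}{x^2+y_2^2}$ is \emph{increasing} in $r$, so the naive comparison $g_{r^*}\ge g_1\ge 0$ on $[-r^*,r^*]$ fails. (The monotone quantity one can actually exploit is the edge value $g_r(r)=1+\gamma-\frac{\gamma y_2}{\sqrt{r^2+y_2^2}}$, which \emph{is} decreasing in $r$ for $\gamma<0$.)

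However, you do not resolve this difficulty, and your proposal is internally inconsistent at the crucial step. The radius $\rho$ you define by the soft-edge requirement $g_\rho(\pm\rho)=0$ solves $(1+\gamma)\sqrt{\rho^2+y_2^2}=\gamma y_2$, hence $\rho^2=-\frac{(1+2\gamma)y_2^2}{(1+\gamma)^2}$. This is strictly smaller than the $r^*$ of (\ref{eq-r*}) (the zero of $g_1$) whenever $\gamma<\gamma_-$; for instance $y_2=1$, $\gamma=-10$ gives $\rho^2=19/81\approx0.235$ versus $(r^*)^2=10\sqrt2/9-1\approx0.571$. So the claim that the Frostman verification ``re-derives $\rho$ and so confirms (\ref{eq-r*})'' cannot hold: you would obtain $\rho$, not $r^*$ (indeed $\rho$ is exactly the Mhaskar--Rakhmanov--Saff radius for this $Q$). ``Granting that $\eta_{Q,r^*}\ge0$'' with $r^*$ from (\ref{eq-r*}) is granting something false, since $g_{r^*}(r^*)<0$, so $\eta_{Q,r^*}$ has negative density near $\pm r^*$. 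The repair is to carry $\rho$ through: since $g_\rho$ decreases in $|x|$ and vanishes at $\pm\rho$, $\eta_{Q,\rho}$ is a positive probability measure, and one then checks the outer Frostman inequality and applies Lemma \ref{inclusion}(ii) with $\Sigma_1=[-\rho,\rho]$, using that $S_{Q,I}\subset[-\rho,\rho]$ (a localization slightly stronger than the $[-r^*,r^*]$ you get from $\eta_{Q,I}^+$). Be aware that formula (\ref{eq-r*}) in the paper appears to contain a slip --- replacing $\sqrt{y_2^2+1}$ by $\sqrt{y_2^2+(r^*)^2}$ turns it into the implicit soft-edge equation and yields the correct radius $\rho$ --- and the paper's claim that the first factor in (\ref{bal-Ir}) is decreasing in $r$ has the monotonicity reversed.
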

\subsection{Case of a repulsive charge $\gam>0$}
The two last inequalities in (\ref{2-ineq}) are still valid, with the second one being always satisfied.
\\[5pt]
1) If $0<\gamma\leq\gamma_{+}$ with
\begin{equation}\label{def-gam-plus}
\gamma_{+}:={y_{2}}(\sqrt{y_{2}^{2}+1}+y_{2}),
\end{equation}
then $\eta_{Q,I}$ is still a positive measure and
$\om_{Q,I}=\eta_{Q,I}$. Note that, when $\gamma=\gamma_{+}$, the density of $\eta_{Q,I}$ vanishes at the origin.
\\[5pt]
2) If $\gamma>\gamma_{+}$, the density $\eta_{Q,I}'$ is negative on $[-r^{*},r^{*}]$, and positive on $K_{r^{*}}=I\setminus[-r^{*},r^{*}]$, where $0<r^{*}\leq1$ satisfies (\ref{eq-r*}). This implies that the support of $\om_{Q,I}$ is a subset of $K_{r^{*}}$.

To go further in computing the support and the density of $\om_{Q,I}$, we will make use of a different method, not based on balayage, but on complex analysis and the theory of singular integral equations, which can be applied in the case of the logarithmic kernel, see \cite{DKM, MFRa, Rakh, MOR} for some of the main related references. This method was already used in \cite{OS, OSW} to study equilibrium problems in the presence of external fields with rational derivatives.
The following proposition can be derived from \cite[Proposition 2.51]{DKM}. \begin{proposition}\label{lem:MOR}
Let 
$$
\Phi(z)=-\frac\gamma2\log(z^{2}+y_{2}^{2}),
$$
be the analytic continuation of $Q(x)$ to $\C$, and let $\widehat \om_{Q,I}$ be the Cauchy transform of the equilibrium measure $\om_{Q,I}$,
$$\widehat {\om}_{Q,I} (z) = \int\frac{d\om_{Q,I}(x)}{x-z}.$$
Then
\begin{equation}\label{algeb1}
((\widehat {\om}_{Q,I})_{\pm} (x) + \Phi'(x))^2 = R(x),
 \qquad x\in(-1,1),
\end{equation}
where $(\widehat {\om}_{Q,I})_{\pm}$ stands for the limit of the Cauchy transform when $z$ tends to the real interval $(-1,1)$ from above and below, $R(x)$ is the rational function,
\begin{equation}\label{def-R}
R(x)=(1+\gamma)^{2}\frac{x^2{(\tilde r^{2}-x^{2})}}{(x^2+y_{2}^2)^2(1-x^2)},
\end{equation}
and $[-1,-\tilde r]\cup[\tilde r,1]$ is the support of $\om_{Q,I}$.
Moreover, the density of the (absolutely continuous) equilibrium measure $\om_{Q,I}$ is given by
\begin{equation}\label{exp-om-Q}
\om'_{Q,I} (x) = \frac{1}{\pi}\,\sqrt{|R(x)|},\qquad x\in \supp\om_{Q,I}.
\end{equation}
\end{proposition}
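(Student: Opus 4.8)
The plan is to transport the result from \cite[Proposition 2.51]{DKM} to the present setting. That reference treats the weighted logarithmic equilibrium problem on $[-1,1]$ with an external field whose derivative $\Phi'$ is a rational function having poles off the support; here $\Phi(z)=-\tfrac\gamma2\log(z^2+y_2^2)$ so that $\Phi'(z)=-\gamma z/(z^2+y_2^2)$ is rational with simple poles at $\pm i y_2\notin(-1,1)$, which is exactly the admissible class. The first step is to recall the general structure theory: the resolvent (Cauchy transform) $\widehat\om_{Q,I}$ of the equilibrium measure, augmented by $\Phi'$, satisfies an algebraic equation of the form $(\widehat\om_{Q,I}(z)+\Phi'(z))^2=R(z)$ where $R$ is rational. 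The variational conditions \eqref{Frostman1}--\eqref{Frostman2} for $\om_{Q,I}$ translate, after differentiating along $(-1,1)$ and using the Plemelj/Sokhotski jump formulas for the Cauchy transform, into the statement that $(\widehat\om_{Q,I})_+ + (\widehat\om_{Q,I})_- + 2\Phi' = 0$ on the support and $(\widehat\om_{Q,I})_+ - (\widehat\om_{Q,I})_- = -2\pi i\,\om'_{Q,I}$; combining these gives that the two boundary values of $\widehat\om_{Q,I}+\Phi'$ differ only by sign on the support and agree off it, hence their common square is a single-valued analytic function, namely $R$.

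Next I would pin down $R$ explicitly by its poles, zeros, and behavior at infinity. The function $R=(\widehat\om_{Q,I}+\Phi')^2$ can only have poles where $\Phi'$ does, i.e.\ double poles at $\pm iy_2$, contributing the factor $(x^2+y_2^2)^{-2}$. On the support it must be nonpositive (so that $\om'_{Q,I}=\tfrac1\pi\sqrt{|R|}\ge 0$ via the jump), while on the two complementary subintervals of $(-1,1)$ adjacent to $\pm1$ — where, as established just before the proposition, the density vanishes and the support has an opening — it must be nonnegative; this forces simple zeros at $\pm\tilde r$ and simple poles at $\pm1$ (the standard hard-edge behavior $|x\mp1|^{-1/2}$ of the equilibrium density on a subinterval of a larger interval), giving the factor $x^2(\tilde r^2-x^2)/\big((x^2+y_2^2)^2(1-x^2)\big)$, where the zero at $x=0$ of even order comes from the symmetry $Q(x)=Q(-x)$ forcing $\om_{Q,I}$, hence $R$, to be even, together with the fact that $x=0$ lies in the gap. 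Finally, since $\widehat\om_{Q,I}(z)=-1/z+O(1/z^2)$ and $\Phi'(z)=-\gamma/z+O(1/z^3)$ as $z\to\infty$, we get $(\widehat\om_{Q,I}+\Phi')^2 = (1+\gamma)^2/z^2 + O(1/z^3)$, which fixes the leading constant to be $(1+\gamma)^2$ and yields \eqref{def-R}. With $R$ identified, \eqref{algeb1} is the algebraic equation and \eqref{exp-om-Q} follows from the jump formula $\om'_{Q,I}(x)=\tfrac{1}{2\pi i}\big((\widehat\om_{Q,I})_-(x)-(\widehat\om_{Q,I})_+(x)\big)$ together with $(\widehat\om_{Q,I})_\pm + \Phi' = \pm\sqrt{R}$ on the support, where $R<0$ there so $\sqrt{R}=i\sqrt{|R|}$.

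The main obstacle is the precise justification of the divisor of $R$ — in particular showing that the zeros at $\pm\tilde r$ are simple (so the density has a square-root vanishing, not higher order, at the inner endpoints) and that there are no spurious extra zeros or poles in $(-1,1)$, so that the two-parameter ansatz actually has the minimal degree dictated by the genus-zero (one-cut-with-a-gap) structure. This is where one must invoke the full force of \cite[Proposition 2.51]{DKM}: the equilibrium measure for a rational-derivative field on an interval has resolvent whose square is rational of the stated minimal type, the support being a union of intervals whose endpoints are determined by the requirement that $\sqrt{R}$ have the correct $\pm$ signs and that the residues/periods encoding the total mass $m(\om_{Q,I})=1$ and the regularity (no mass concentration at interior endpoints) be satisfied. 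Once the genus-zero reduction and the endpoint equations are in place, identifying the constant $\tilde r$ as the solution of \eqref{eq-r*} and verifying $[-1,-\tilde r]\cup[\tilde r,1]$ is indeed the support follows from the sign analysis of $R(x)$ on $(-1,1)$ carried out above, matching the earlier balayage-based conclusion that the support is contained in $K_{r^*}$.
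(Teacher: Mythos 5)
Your overall approach — deferring the structural work to \cite[Proposition~2.51]{DKM} and then pinning down the rational function $R$ by its divisor, evenness, and behaviour at infinity — is essentially what the paper intends, since the paper itself only cites that reference and supplies no independent proof. The residue computation at infinity ($\widehat\om_{Q,I}\sim -1/z$, $\Phi'\sim -\gamma/z$, hence $(1+\gamma)^2/z^2$), the double poles at $\pm iy_2$, the double zero at $0$ from symmetry, and the final jump identity $\om'_{Q,I}=\tfrac1\pi\sqrt{|R|}$ are all correctly identified, so the formula \eqref{def-R} comes out right.

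However, two concrete slips in the middle of the sketch should be corrected, even though they do not change the final formula. First, the geometry of support versus gap is described backwards: the single gap is the middle interval $(-\tilde r,\tilde r)$ (containing $0$, where $R>0$), while the two intervals adjacent to $\pm1$ are the support (where $R<0$). Your phrase ``the two complementary subintervals of $(-1,1)$ adjacent to $\pm1$ — where the density vanishes and the support has an opening'' reverses this; it is precisely on those two outer intervals that the density is supported, with hard-edge blow-up at $\pm1$ and soft-edge square-root vanishing at $\pm\tilde r$. Second, you write that $\tilde r$ is ``the solution of \eqref{eq-r*}.'' That equation determines $r^*$, the zero of the \emph{signed} equilibrium density on $I$, which only gives the inclusion $S_{Q,I}\subseteq K_{r^*}$. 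The actual inner endpoint $\tilde r$ of the support is strictly larger in general and is fixed by the normalization $m(\om_{Q,I})=1$; this is exactly the mass computation the paper carries out afterwards to obtain \eqref{asol}. Conflating $r^*$ with $\tilde r$ would, if taken literally, give the wrong endpoints. With those two points repaired, the sketch is a faithful account of how the proposition follows from \cite{DKM}.
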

Applying the proposition for the case $\gamma>\gamma_{+}$, we get a complete description of the equilibrium measure for a repulsive charge.
\begin{theorem}
Let $\gamma$ be a positive charge.\\
1) If $\gamma\leq\gamma_{+}$ (given by (\ref{def-gam-plus})) then $\om_{Q,I}$ has support $[-1,1]$ and the density given by (\ref{bal-Ir}) with $r=1$.
\\
2) If $\gamma>\gamma_{+}$ then $\om_{Q,I}$ has support $K_{\tilde r}$ with
the inner endpoint 
    \begin{equation}\label{asol}
    \tilde r = \frac{\sqrt{\gamma^2-(2\gamma+1)y_{2}^2}}{1+\gamma},
    \end{equation}
    and the density given by
    \begin{equation}\label{denslogrep}
    \om'_{Q,I}(x) = \,\frac{1+\gamma}{\pi}\frac{|x| \sqrt{x^2-\tilde r^2}}{(x^2+y_{2}^2) \sqrt{1-x^2}},
    \qquad \tilde r<|x|<1.    
    \end{equation}
\end{theorem}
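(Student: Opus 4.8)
The plan is to distinguish the two regimes according to whether the signed equilibrium measure $\eta_{Q,I}$ on the full segment $I=[-1,1]$ is positive. For case 1), $\gamma\leq\gamma_{+}$, the computation preceding the statement already shows (via the second inequality in \eqref{2-ineq}, which is always satisfied for $\gamma>0$, together with $\gamma\leq\gamma_{+}$) that the density \eqref{bal-Ir} with $r=1$ is nonnegative on $(-1,1)$; hence $\eta_{Q,I}$ is a positive probability measure supported on $I$, so it satisfies the Frostman equality \eqref{defsigned}, and by part (ii) of Lemma \ref{inclusion} applied with $\Sigma=\Sigma_{1}=I$ we conclude $\om_{Q,I}=\eta_{Q,I}$. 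This gives the stated support $[-1,1]$ and density. (When $\gamma=\gamma_{+}$ the density vanishes at the origin but the measure is still positive, so the argument is unchanged.)

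For case 2), $\gamma>\gamma_{+}$, I would invoke Proposition \ref{lem:MOR}. We already know from the discussion before the theorem that $S_{Q,I}\subseteq K_{r^{*}}=I\setminus[-r^{*},r^{*}]$, that the support is symmetric about $0$ (by symmetry of $Q$ and uniqueness of $\om_{Q,I}$), and that its outer endpoints are $\pm1$ (expanding mass outward toward $\mathbb S^{0}=\{\pm1\}$ lowers the weighted energy since $\gamma>0$). Thus the support has the form $[-1,-\tilde r]\cup[\tilde r,1]$ for some $0<\tilde r<1$, and Proposition \ref{lem:MOR} furnishes \eqref{algeb1}, the rational function $R$ in \eqref{def-R}, and the density formula \eqref{exp-om-Q}. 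It remains to pin down $\tilde r$ and to rewrite $\sqrt{|R(x)|}$ in the form \eqref{denslogrep}.

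To determine $\tilde r$, I would use the two normalization/consistency conditions that the Cauchy transform $\widehat\om_{Q,I}$ must satisfy: it is analytic at infinity with expansion $\widehat\om_{Q,I}(z)=-1/z+O(1/z^{2})$ (since $m(\om_{Q,I})=1$), and it is analytic (single-valued, with no residue) at the off-axis points $z=\pm i y_{2}$ where $\Phi'$ has simple poles. Concretely, from \eqref{algeb1} one has $\widehat\om_{Q,I}(z)=-\Phi'(z)+\sqrt{R(z)}$ on the appropriate sheet, where $\Phi'(z)=-\gamma z/(z^{2}+y_{2}^{2})$; requiring that the simple pole of $-\Phi'(z)$ at $z=\pm i y_{2}$ be cancelled by $\sqrt{R(z)}$, i.e.\ that $R$ have a matching double pole with the correct leading coefficient, produces one algebraic relation among $\tilde r$, $\gamma$, $y_{2}$, and expanding $\sqrt{R(z)}=(1+\gamma)(1/z)\sqrt{(\tilde r^{2}-x^{2})/\dots}$ at infinity and matching the $-1/z$ coefficient produces a second. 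Solving these two relations simultaneously yields \eqref{asol}; one then checks $\gamma>\gamma_{+}$ is exactly the condition making the radicand $\gamma^{2}-(2\gamma+1)y_{2}^{2}$ positive and $\tilde r<1$, which is consistent with the earlier finding that $\tilde r$ should equal the $r^{*}$ of \eqref{eq-r*} (indeed a direct algebraic check confirms $(r^{*})^{2}=\tilde r^{2}$). Finally, substituting \eqref{def-R} into \eqref{exp-om-Q} and taking the absolute value on $\tilde r<|x|<1$ gives \eqref{denslogrep} after simplification.

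The main obstacle is the bookkeeping in the previous paragraph: correctly choosing the branch of $\sqrt{R(z)}$ so that $\widehat\om_{Q,I}$ has the right sign of the density (positivity of \eqref{denslogrep}), decays like $-1/z$ at infinity, and is genuinely analytic — not merely meromorphic — at $\pm i y_{2}$. Getting all three conditions to hold simultaneously is what forces the specific value \eqref{asol}; everything else (verifying the Frostman inequalities hold strictly off the support, checking the endpoint behavior) is routine given Proposition \ref{lem:MOR} and the symmetry considerations above.
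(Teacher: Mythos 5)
Your part (1) follows the paper exactly. For part (2), the paper determines $\tilde r$ by imposing the normalization $\int d\om_{Q,I}=1$ directly on the density \eqref{denslogrep} and evaluating the resulting integral in closed form; you propose instead to extract $\tilde r$ from the requirement that $\widehat\om_{Q,I}(z)=-\Phi'(z)+\sqrt{R(z)}$ be analytic at the poles $z=\pm iy_{2}$ of $\Phi'$. That residue computation is valid and in fact yields
\[
(1+\gamma)^{2}(\tilde r^{2}+y_{2}^{2})=\gamma^{2}(1+y_{2}^{2}),
\]
which is exactly \eqref{asol}; it is arguably cleaner than the paper's definite integral. So this part of your proposal is a genuinely different (complex-analytic rather than computational) route to the same constant.

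Two points in your write-up are, however, inaccurate. First, you present the matching of the $-1/z$ coefficient of $\widehat\om_{Q,I}$ at infinity as a ``second'' algebraic relation. With $R$ already in the form \eqref{def-R}, the leading behavior $\sqrt{R(z)}\sim\pm(1+\gamma)/z$ is built into the prefactor $(1+\gamma)^{2}$ and holds for every $\tilde r$; choosing the decaying branch so that $-\Phi'(z)+\sqrt{R(z)}\sim-1/z$ is merely a branch selection, not an equation constraining $\tilde r$. There is only one genuine condition (the residue cancellation at $\pm iy_{2}$, equivalently the mass normalization), and fortunately it alone determines $\tilde r$. Second, your parenthetical check ``$(r^{*})^{2}=\tilde r^{2}$'' is false. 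From \eqref{eq-r*}, $(r^{*})^{2}=\frac{\gamma y_{2}\sqrt{y_{2}^{2}+1}}{1+\gamma}-y_{2}^{2}$, while \eqref{asol} gives $\tilde r^{2}=\frac{\gamma^{2}(1+y_{2}^{2})}{(1+\gamma)^{2}}-y_{2}^{2}$; these agree only at $\gamma=\gamma_{+}$ (both equal $0$), and for $\gamma>\gamma_{+}$ one has $\tilde r>r^{*}$. This is not a contradiction: the discussion preceding the theorem only gives $S_{Q,I}\subseteq K_{r^{*}}$, i.e.\ $\tilde r\geq r^{*}$, and the inner endpoint of the support is strictly larger than the sign-change point of $\eta_{Q,I}'$. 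Neither error is load-bearing for the argument, but both should be corrected.
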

\begin{remarks}
1) When $\gamma=\gamma_{+}$ we have, according to the language in random matrix theory, 
a {\it phase transition}, since the support of the equilibrium measure passes from the so-called one-cut regime (a single interval) to a two-cut regime (two disjoint intervals). The density of the equilibrium measure evolves continuously through this transition; however, the equilibrium energy presents a discontinuity, known in the literature as singularity of type I (birth of cut, see e.g.\ \cite[Section 4.1]{MOR}.
\\
2) When $\gamma>\gamma_{+}$, note the behavior of the density at the endpoints in \eqref{denslogrep}. It vanishes as a square root as we approach ``soft'' endpoints $\pm\tilde r$. 
On the contrary, the density tends to infinity as the reciprocal of a square root as we approach the ``hard'' endpoints $\pm 1$ (i.e. the endpoints of the conductor).
\end{remarks}
\begin{proof}
1) was derived at the beginning of the section.\\
2) The expression (\ref{denslogrep}) for the density $\om_{Q}'$ just follows from (\ref{exp-om-Q}) and (\ref{def-R}). To find the value (\ref{asol}) of $\tilde r$, one can use the fact that the total mass of $\om_{Q}$ is 1, which by symmetry, gives the condition
\begin{equation}\label{eq-mass1}
\frac{1+\gamma}{\pi}\int_{\tilde r}^{1}\frac{x\sqrt{x^2-\tilde r^2}}{(x^2+y_{2}^2) \sqrt{1-x^2}}dx=\frac12.
\end{equation}
The above integral can be rewritten, with $s=x^{2}$, and $v^{2}=(s-\tilde r^{2})/(1-s)$,
\begin{align*}
\frac12\int_{\tilde r^{2}}^{1}\frac{\sqrt{s-\tilde r^2}}{(s+y_{2}^2) \sqrt{1-s}}ds & =
(1-\tilde r^{2})\int_{0}^{\infty}\frac{v^{2}dv}{(v^{2}+1)((1+y_{2}^{2})v^{2}+(\tilde r^{2}+y_{2}^{2})}
\\[5pt]
& =\frac{1-\tilde r^{2}}{1+y_{2}^{2}}\int_{0}^{\infty}\frac{v^{2}dv}{(v^{2}+1)(v^{2}+c)},\qquad\text{where }
c=\frac{\tilde r^{2}+y_{2}^{2}}{1+y_{2}^{2}}.
\end{align*}
The last integral can be easily evaluated and we get for the above expression,
$$
\frac{1-\tilde r^{2}}{(1+y_{2}^{2})(c-1)}\left[-\arctan v+\sqrt{c}\arctan(v/\sqrt{c})\right]_{0}^{\infty}
=\frac\pi2\frac{1-\tilde r^{2}}{(1+y_{2}^{2})(\sqrt{c}+1)}
=\frac\pi2\frac{1-c}{\sqrt{c}+1}.
$$
Equation (\ref{eq-mass1}) becomes
$$
{(1-\sqrt{c})(1+\gamma)}=1
$$
which readily gives (\ref{asol}).
\end{proof}
\begin{figure}[htb]
\centering
  \includegraphics[scale=0.6]{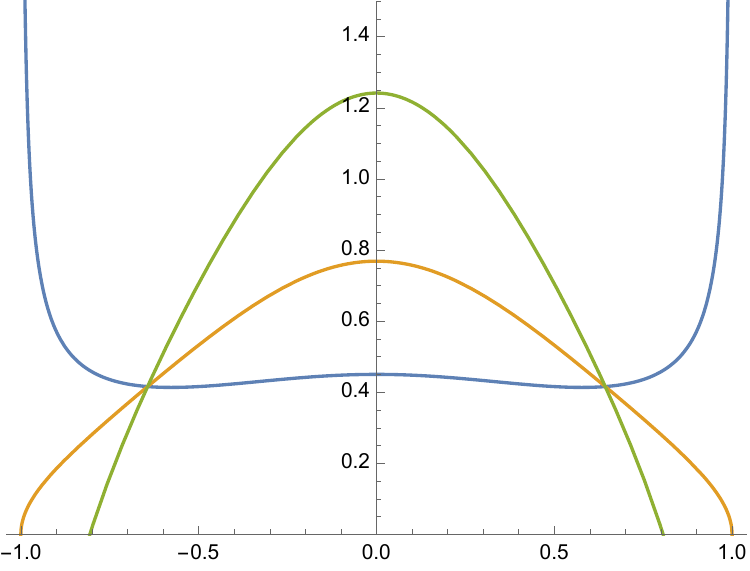}\hspace{.6cm}
  \includegraphics[scale=0.6]{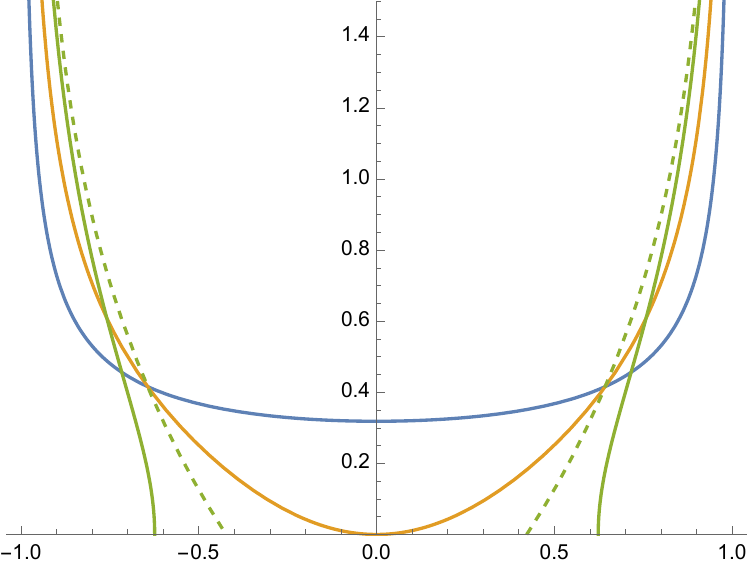}
\caption{Radial densities of the logarithmic equilibrium measures for negative charges $\gamma\in\{-1,\gamma_{-}=\sqrt{2}/(1-\sqrt{2}),-7\}$ (in blue, orange, green) on the left, and for positive charges $\gamma\in\{0,\gamma_{+}=\sqrt{2}+1,5\}$ (in blue, orange, green) on the right. The charge is located at height $y_{2}=1$ above the segment. On the right, the dashed curve is the density of the positive component of the signed equilibrium measure for $\gamma=5$.}
\end{figure}
\section{Proof of the shell conjecture when $d=1$}\label{shell}
In this section, we consider the segment $I=[-1,1]\subset\R$, assuming $0<\alpha<1$ (or equivalently $0<s<1$). We study the case of a charge $\gamma>0$ whose strength is larger than the critical value from Theorem \ref{Thm-pos-small}, that is $\gamma_{+}<\gamma$. We show, in a nontrivial way, that $S_{Q,I}$ is the union of two symmetric segment, namely, of the form
$$
S_{Q,I}=K_{r}:=[-1,-r]\cup[r,1],
$$
for some $0<r<1$, that is we solve Conjecture \ref{Conjec}, see Corollary \ref{supp-shell} for a statement. To achieve our goal we will follow the scheme of proof used in \cite{KD}, based on a sequence of iterative balayages, starting with the signed equilibrium measure $\eta_{Q,I}$, and sweeping out recursively the negative part of the measures onto their positive part. We show that the sequence tends to a positive limit measure, which, up to mass loss, is the equilibrium measure $\om_{Q,I}$.

Note that all the results obtained in this section hold true for the logarithmic kernel, that is in the case $s=0$ or, equivalently, $\alpha=1$.

We first need several lemmas.
\begin{lemma}[{\cite[Lemma 2.2]{SKM}}]\label{Samko}
Let $f$ be an absolutely continuous function on a segment $[a,b]$, and let $0<s<1$. Then the derivatives
$$
\frac{d}{dx}\int_{a}^{x}\frac{f(t)dt}{(x-t)^{s}},\qquad\frac{d}{dx}\int_{x}^{b}\frac{f(t)dt}{(t-x)^{s}},
$$
exist almost everywhere, and they are respectively equal to
$$
\frac{f(a)}{(x-a)^{s}}+\int_{a}^{x}\frac{f'(t)dt}{(x-t)^{s}},\qquad
-\frac{f(b)}{(b-x)^{s}}+\int_{x}^{b}\frac{f'(t)dt}{(t-x)^{s}}.
$$
\end{lemma}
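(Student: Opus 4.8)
The statement to prove is Lemma \ref{Samko}: a formula for the derivative of fractional integrals (Riemann--Liouville type) of an absolutely continuous function. The plan is to treat the two integrals symmetrically—it suffices to handle the first one, $\frac{d}{dx}\int_a^x f(t)(x-t)^{-s}\,dt$, since the second follows by the substitution $t\mapsto a+b-t$. The natural approach is to first rewrite the integral in a form where differentiation under the integral sign is legitimate, using the absolute continuity of $f$ to write $f(t)=f(a)+\int_a^t f'(\tau)\,d\tau$.

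First I would substitute $f(t) = f(a) + \int_a^t f'(\tau)\,d\tau$ into the integral, splitting it as $f(a)\int_a^x (x-t)^{-s}\,dt + \int_a^x (x-t)^{-s}\bigl(\int_a^t f'(\tau)\,d\tau\bigr)dt$. The first piece integrates explicitly to $f(a)(x-a)^{1-s}/(1-s)$, whose derivative is $f(a)(x-a)^{-s}$, already matching the leading term in the claimed formula. For the second piece I would switch the order of integration (Tonelli/Fubini, justified since $0<s<1$ makes $(x-t)^{-s}$ locally integrable and $f'\in L^1$ by absolute continuity): the double integral becomes $\int_a^x f'(\tau)\bigl(\int_\tau^x (x-t)^{-s}\,dt\bigr)d\tau = \frac{1}{1-s}\int_a^x f'(\tau)(x-\tau)^{1-s}\,d\tau$.

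Now the quantity in question is $\frac{1}{1-s}\bigl(f(a)(x-a)^{1-s} + \int_a^x f'(\tau)(x-\tau)^{1-s}\,d\tau\bigr)$, and I must differentiate with respect to $x$. The function $g(x) := \int_a^x f'(\tau)(x-\tau)^{1-s}\,d\tau$ is now an integral with a Hölder-continuous (indeed, vanishing-at-the-diagonal) kernel exponent $1-s\in(0,1)$, so it is genuinely more regular; one shows $g$ is absolutely continuous with $g'(x) = (1-s)\int_a^x f'(\tau)(x-\tau)^{-s}\,d\tau$ almost everywhere. This is the standard fact that differentiating a fractional integral of order $>0$ of an $L^1$ function lowers the order; it can be proved by writing $g(x+h)-g(x)$, splitting off the new interval $[x,x+h]$ (which contributes $O(h^{2-s})$, hence differentiates to $0$), and on $[a,x]$ using the difference quotient of $(x-\tau)^{1-s}$ together with dominated convergence, the dominating function being integrable because $|(x+h-\tau)^{1-s}-(x-\tau)^{1-s}|/h \lesssim (x-\tau)^{-s}$ uniformly in small $h$. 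Combining, $\frac{d}{dx}\int_a^x f(t)(x-t)^{-s}\,dt = \frac{1}{1-s}\bigl((1-s)f(a)(x-a)^{-s} + (1-s)\int_a^x f'(\tau)(x-\tau)^{-s}\,d\tau\bigr) = \frac{f(a)}{(x-a)^s} + \int_a^x \frac{f'(t)\,dt}{(x-t)^s}$, as claimed; the second formula follows by reflection.

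The main obstacle is the rigorous justification of the differentiation step for $g$—namely that one may pass the difference quotient inside the integral despite the singular kernel $(x-\tau)^{-s}$ appearing in the limit. The key is that the difference quotients $[(x+h-\tau)^{1-s}-(x-\tau)^{1-s}]/h$ are dominated, for $|h|$ small, by a constant times $(x-\tau)^{-s}$, which is integrable against $f'\in L^1$ near $\tau=x$ precisely because $s<1$; this legitimizes dominated convergence and handles the almost-everywhere existence. Since this is exactly \cite[Lemma 2.2]{SKM}, I would in fact simply cite it, but the sketch above is the route one would take to prove it from scratch.

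\begin{proof}
This is \cite[Lemma 2.2]{SKM}; we recall the idea for completeness. By the substitution $t\mapsto a+b-t$ it suffices to treat the first derivative. Writing $f(t)=f(a)+\int_a^t f'(\tau)\,d\tau$ and applying Fubini's theorem (legitimate since $(x-t)^{-s}\in L^1_{\mathrm{loc}}$ and $f'\in L^1[a,b]$), one obtains
\begin{equation*}
\int_a^x\frac{f(t)\,dt}{(x-t)^{s}}=\frac{1}{1-s}\left(f(a)(x-a)^{1-s}+\int_a^x f'(\tau)(x-\tau)^{1-s}\,d\tau\right).
\end{equation*}
The map $x\mapsto\int_a^x f'(\tau)(x-\tau)^{1-s}\,d\tau$ is absolutely continuous with derivative $(1-s)\int_a^x f'(\tau)(x-\tau)^{-s}\,d\tau$ for almost every $x$: splitting the increment over $[x,x+h]$ (which contributes $O(|h|^{2-s})$) and over $[a,x]$, the difference quotients of $(x-\tau)^{1-s}$ are dominated by a constant times $(x-\tau)^{-s}\in L^1$ near $\tau=x$ because $s<1$, so dominated convergence applies. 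Differentiating the displayed identity gives the stated formula, and the second one follows by reflection.
\end{proof}
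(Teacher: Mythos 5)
The paper does not prove this lemma at all: it is stated purely as a citation to Samko--Kilbas--Marichev \cite[Lemma 2.2]{SKM}, with no argument supplied. Your proposal does the same (you explicitly say you would cite it), but you also supply a correct self-contained proof sketch, which the paper omits.

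Your argument is sound. The decomposition $f(t)=f(a)+\int_a^t f'(\tau)\,d\tau$, Fubini to pass to $\frac{1}{1-s}\int_a^x f'(\tau)(x-\tau)^{1-s}\,d\tau$, and the mean-value estimate $\bigl|(x+h-\tau)^{1-s}-(x-\tau)^{1-s}\bigr|/h\le (1-s)(x-\tau)^{-s}$ for dominated convergence are all legitimate. Two small points worth being precise about, both absorbed by the ``almost everywhere'' in the statement: the boundary contribution $\int_x^{x+h}f'(\tau)(x+h-\tau)^{1-s}\,d\tau$ is $O(|h|^{2-s})$ only where $\int_x^{x+h}|f'|=O(|h|)$, i.e.\ at Lebesgue points of $|f'|$; and the dominating function $|f'(\tau)|(x-\tau)^{-s}$ is integrable on $[a,x]$ only for a.e.\ $x$ (finiteness a.e.\ of the Riesz potential of an $L^1$ function, e.g.\ by Tonelli over $x\in[a,b]$). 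With those caveats made explicit, the proof is complete and matches the standard Riemann--Liouville argument in \cite{SKM}.
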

\begin{lemma}\label{2seg}
Let $K=[a,b]\cup[c,d]$ be the union of two segments in $\R$, with Riesz equilibrium measure $\om_{K}$. Then $\om_{K}'/\om_{[a,b]}'$ is decreasing on $[a,b]$ and 
$\om_{K}'/\om_{[c,d]}'$ is increasing on $[c,d]$.
\end{lemma}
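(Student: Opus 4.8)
\textbf{Proof proposal for Lemma \ref{2seg}.}

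The plan is to exploit the explicit integral representation \eqref{dens-mes-eq} for the density of the equilibrium measure of $K=[a,b]\cup[c,d]$, together with the known density \eqref{equilball} (one-dimensional version) of the equilibrium measure of a single segment. By symmetry it suffices to treat the statement on $[a,b]$; the claim on $[c,d]$ follows by reflection. Writing $g(x):=\om_K'(x)/\om_{[a,b]}'(x)$ for $x\in(a,b)$, the density $\om_{[a,b]}'$ is, up to a positive constant, $((x-a)(b-x))^{-\alpha/2}$, so that $g$ is a positive real analytic function on $(a,b)$ and the content of the lemma is that $g'(x)<0$ there. I would first record, from \eqref{dens-mes-eq} applied to both $K$ and $[a,b]$, that
$$
\om_K'(x)=A(1,\alpha)\int_K\frac{F_K-U^{\om_K}(y)}{|x-y|^{1+\alpha}}\,dy,
\qquad
\om_{[a,b]}'(x)=A(1,\alpha)\int_a^b\frac{F_{[a,b]}-U^{\om_{[a,b]}}(y)}{|x-y|^{1+\alpha}}\,dy,
$$
and then compare the two. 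The key structural fact is that on $[a,b]$ the potential of the ``big'' equilibrium measure $U^{\om_K}$ is constant and equal to $F_K$, whereas on $[c,d]$ it is $F_K$ as well; but $\om_K$ restricted to $[a,b]$ is \emph{not} the equilibrium measure of $[a,b]$, and the discrepancy is precisely the balayage of $\om_K\big|_{[c,d]}$ onto $[a,b]$.

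The cleanest route, I think, is to use balayage rather than \eqref{dens-mes-eq} directly. Decompose $\om_K=\om_K\big|_{[a,b]}+\om_K\big|_{[c,d]}$. Sweeping the second piece onto $[a,b]$ and using that $U^{\om_K}\equiv F_K$ q.e.\ on $[a,b]$, one finds that $\om_K\big|_{[a,b]}+\mathrm{Bal}(\om_K\big|_{[c,d]},[a,b])$ is a measure on $[a,b]$ whose potential is constant q.e.\ there, hence is a multiple of $\om_{[a,b]}$; call the total mass $t\in(0,1)$. Therefore
$$
\om_K'(x)\Big|_{[a,b]}=t\,\om_{[a,b]}'(x)-\mathrm{Bal}'\!\big(\om_K\big|_{[c,d]},[a,b]\big)(x),
\qquad x\in(a,b),
$$
so that $g(x)=t-\big(\mathrm{Bal}'(\om_K\big|_{[c,d]},[a,b])(x)\big)/\om_{[a,b]}'(x)$, and it remains to show that the ratio $\mathrm{Bal}'(\om_K\big|_{[c,d]},[a,b])(x)/\om_{[a,b]}'(x)$ is \emph{increasing} on $[a,b]$. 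By the superposition principle \eqref{super} it is enough to prove this for a single point mass $\delta_\xi$ with $\xi\in[c,d]$, i.e.\ that $\mathrm{Bal}'(\delta_\xi,[a,b])(x)/\om_{[a,b]}'(x)$ is increasing in $x\in(a,b)$ for each fixed $\xi>b$. Here I would invoke the explicit formula \eqref{Bal-in-out} for the balayage of a point mass onto a segment (the one-dimensional instance of the outside case): up to a constant, $\mathrm{Bal}'(\delta_\xi,[a,b])(x)$ equals $\big((\xi-a)(\xi-b)\big)^{\alpha/2}\big((x-a)(b-x)\big)^{-\alpha/2}/(\xi-x)$, whence
$$
\frac{\mathrm{Bal}'(\delta_\xi,[a,b])(x)}{\om_{[a,b]}'(x)}
=\mathrm{const}\cdot\big((\xi-a)(\xi-b)\big)^{\alpha/2}\,\frac{1}{\xi-x},
$$
which, as a function of $x\in(a,b)$ with $\xi>b$ fixed, is manifestly strictly increasing because $1/(\xi-x)$ is. Averaging over $\xi$ against the positive measure $\om_K\big|_{[c,d]}$ preserves monotonicity, giving $g'(x)<0$ on $(a,b)$.

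The one delicate point to be careful about is the justification that $\om_K\big|_{[a,b]}+\mathrm{Bal}(\om_K\big|_{[c,d]},[a,b])$ is indeed a \emph{positive} multiple of $\om_{[a,b]}$: this requires that its potential be constant q.e.\ on $[a,b]$ (which follows from \eqref{balayage} and $U^{\om_K}\equiv F_K$ q.e.\ on $K$), that it have finite energy and support in $[a,b]$, and then uniqueness of the measure of given mass with constant potential q.e.; the total mass $t$ is positive since $\om_K\big|_{[a,b]}\neq 0$ by part (i) of Theorem \ref{Wal} (the interior of $[a,b]$ lies in the support of $\om_K$), and $t<1$ because balayage onto the disjoint set $[a,b]$ loses mass (Remark \ref{Rem-mass-loss}) unless $\om_K\big|_{[c,d]}=0$, which again is excluded by Theorem \ref{Wal}(i). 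I expect the main obstacle to be purely bookkeeping: making sure the differentiation under the integral sign in \eqref{super} and the interchange of ``$\mathrm{Bal}'$ of an average'' with ``average of $\mathrm{Bal}'$'' are legitimate, for which Proposition \ref{bal-abs-cont} (real analyticity and the integrable boundary behavior $|x-x_0|^{-\alpha/2}$) and Tonelli's theorem supply everything needed.
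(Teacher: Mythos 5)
Your proof is correct and follows essentially the same route as the paper: decompose $\om_K|_{[a,b]}$ as a positive multiple of $\om_{[a,b]}$ minus $\mathrm{Bal}(\om_K|_{[c,d]},[a,b])$, then use the explicit point-mass balayage formula together with the superposition principle to show that the subtracted ratio is increasing. If anything, your determination of the multiplicative constant $t$ (via the potential-theoretic uniqueness argument) is a bit more careful than the paper's, which writes $(1+m)$ even though the restriction $\om_K|_{[a,b]}$ is not a probability measure; as you correctly note, the exact value of the constant is immaterial to the monotonicity conclusion.
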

\begin{proof}
The restriction $\om_{K}|_{[a,b]}$ can be seen as the equilibrium measure on $[a,b]$ in the external field created by $\om_{K}|_{[c,d]}$. Moreover, the corresponding signed equilibrium measure
\begin{equation}\label{sign-a-b}
-Bal(\om_{K}|_{[c,d]},[a,b])+(1+m)\om_{[a,b]},
\end{equation}
with $m$ the mass of $Bal(\om_{K}|_{[c,d]})$,
 is positive because $\supp\om_{K}=K$ (recall Theorem \ref{Wal}), in particular, $\supp\om_{K}|_{[a,b]}=[a,b]$. Thus $\om_{K}|_{[a,b]}$ equals (\ref{sign-a-b}).
The balayage of a point mass $\delta_{t}$ on $[a,b]$ satisfies
$$
(|x-b||x-a|)^{\alpha/2}\hat\delta_{t}(x)= \gamma_{s}\left({|t-b||t-a|}\right)^{\alpha/2}\frac{dx}{|x-t|},
\qquad\gamma_{s}=\pi^{-1}\sin(\alpha\pi/2),
$$
which increases on $[a,b]$ when $t$ is on the right of $[a,b]$, in particular when $t\in[c,d]$. This shows, together with the superposition principle, the first assumption. The second assertion is showed similarly (or follows by symmetry).
\end{proof}
\begin{remark}
We do not know if a version of this result holds in the case of a shell in $\R^{d}$, $d\geq2$, the main difference with the one-dimensional case being that a shell is a connected set. The fact that $K=[a,b]\cup[c,d]$ has two components was used in the above proof to make a link between the unweighted equilibrium measure on $K=[a,b]\cup[c,d]$ and a weighted equilibrium measure on $[a,b]$.
\end{remark}
\begin{lemma}\label{variat-bal}
Let $\om_{I}$ be the equilibrium measure of $[-1,1]$,
and let $\nu$ be a positive measure supported in $(-r,r)$. Then, the balayage measure $\hat\nu$ on $K_{r}$ is absolutely continuous with respect to the Lebesgue measure, and its density $\hat\nu'$ satisfies that
\begin{equation*}
(\hat\nu'/\om_{I}')(|x|)
\end{equation*}
is a decreasing function of $|x|$, $x\in K_{r}$. 
\end{lemma}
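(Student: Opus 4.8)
The statement concerns the balayage onto $K_{r}=[-1,-r]\cup[r,1]$ of a positive measure $\nu$ supported in the open interval $(-r,r)$, and asserts that the quotient $(\hat\nu'/\om_I')(|x|)$ is decreasing in $|x|$ on $K_r$. My plan is to reduce everything to the case of a single point mass $\nu=\delta_t$ with $t\in(-r,r)$ via the superposition principle (\ref{super}): if for every such $t$ the quotient $(\hat\delta_t'/\om_I')(|x|)$ is a decreasing function of $|x|$ on $K_r$, then integrating in $t$ against $d\nu(t)\geq0$ preserves monotonicity, because a positive combination of functions that are decreasing in $|x|$ is again decreasing in $|x|$. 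Note $\om_I'$ does not depend on $t$, so it simply factors out of the superposition integral.

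\textbf{The point-mass computation.} For the single point mass, the key is an explicit formula for the density of $Bal(\delta_t,K_r)$. Since $K_r$ is a union of two segments, one can either invoke a known explicit expression for the balayage of a point mass onto the complement of a subinterval of a segment (as used elsewhere in the paper, e.g.\ \cite{BDT}), or reduce to it by first sweeping $\delta_t$ from $(-r,r)$ onto $K_r$ using the representation of $Bal(\delta_t,K_r)$ in terms of the equilibrium measure of the Kelvin-transformed set, cf.\ Lemma \ref{Bal-eq} and Proposition \ref{bal-abs-cont}. Writing $\om_I'(x)=c_1(1-x^2)^{-\alpha/2}$ from (\ref{equilball}), the quotient $\hat\delta_t'(x)/\om_I'(x)$ should take the form of $(1-x^2)^{\alpha/2}$ times the balayage density, which near the outer endpoints $\pm1$ is bounded (the $(1-x^2)^{-\alpha/2}$ singularities cancel) and near the inner endpoints $\pm r$ blows up like $(x^2-r^2)^{-\alpha/2}$. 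Then I would check monotonicity directly: on the right branch $[r,1]$ I expect the quotient to be a decreasing function of $x$, and on the left branch $[-1,-r]$ a decreasing function of $|x|=-x$, i.e.\ increasing in $x$; combining, it is decreasing in $|x|$ on all of $K_r$. This should come down to differentiating an expression of the form $\left((x^2-r^2)(1-x^2)\right)^{\alpha/2}\cdot\frac{(\,\text{positive}\,)^{\alpha/2}}{|x-t|}$ in the radial variable and showing the derivative has the right sign; since $t\in(-r,r)$, the factor $|x-t|$ is increasing as $|x|$ moves outward along either branch, and the remaining weight factor $\left((x^2-r^2)(1-x^2)\right)^{\alpha/2}$ — as a function of $u=x^2$ on $[r^2,1]$ — is the square of a function peaked at $u=(1+r^2)/2$, so it is not monotone; the product must still be handled with care.

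\textbf{Main obstacle.} The delicate point is precisely that $\left((x^2-r^2)(1-x^2)\right)^{\alpha/2}$ is not monotone on each branch, so the monotonicity of the full quotient is not immediate term-by-term; it relies on a genuine cancellation with the $1/|x-t|$ factor and on the location $|t|<r$. I would try to organize this by substituting $u=x^2$ and showing that the logarithmic derivative in $u$ of the candidate quotient is negative (on the right branch) for all $t$ with $|t|<r$ — this is a one-variable inequality after the substitution, likely provable by an elementary estimate separating the contributions of the endpoints $r$, $1$ and the pole $t$. An alternative, possibly cleaner route that avoids explicit densities altogether: mimic the argument of Lemma \ref{2seg}, namely realize $\hat\nu|_{[r,1]}$ and $\hat\nu|_{[-1,-r]}$ via weighted equilibrium problems on the individual segments and exploit that the balayage of a mass sitting to the \emph{left} of $[r,1]$ has density with an increasing profile there (the weight $(|x-1||x-r|)^{\alpha/2}$ times $1/|x-t|$ increases on $[r,1]$ when $t<r$), which is exactly the monotonicity statement in the proof of Lemma \ref{2seg}; dividing by $\om_I'$, whose profile on $[r,1]$ I would compare against $\om_{[r,1]}'$, should then yield the decrease. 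I expect the sign bookkeeping in this comparison — reconciling the weight $(1-x^2)^{-\alpha/2}$ of $\om_I'$ with the weights $(|x-r||x-1|)^{-\alpha/2}$ natural to $K_r$ — to be the real work, but structurally it is the same mechanism already used successfully for Lemma \ref{2seg}.
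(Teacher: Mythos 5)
Your reduction to a point mass $\nu=\delta_t$ via the superposition principle is exactly right and matches the paper, but neither of your two routes is actually carried through, and each has a real gap.

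Route 1 stalls because there is no simple closed-form density for the balayage of a point mass onto a \emph{union of two intervals} (the formula from \cite{BDT} you cite concerns a single interval; (\ref{Bal-in-out}) concerns a ball or its complement). You correctly flag that the weight $\bigl((x^2-r^2)(1-x^2)\bigr)^{\alpha/2}$ is not monotone on either branch, but you offer no mechanism to resolve the resulting one-variable inequality, and the paper never writes such an explicit formula at all.

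Route 2 has a genuine structural gap. If you realize $\hat\nu|_{[r,1]}$ as a balayage onto the single interval $[r,1]$, you get $\hat\nu|_{[r,1]} = Bal(\nu,[r,1]) - Bal(\hat\nu|_{[-1,-r]},[r,1])$, a \emph{difference} of two terms. Both $\nu$ and $\hat\nu|_{[-1,-r]}$ sit to the left of $[r,1]$, so after dividing by $\om'_{[r,1]}$ both terms have profiles that decrease with $x$ — and a difference of two decreasing functions need not be decreasing. (You also have the sign reversed in one spot: for a mass to the \emph{left} of $[r,1]$, $1/|x-t|$ is \emph{decreasing} on $[r,1]$, not increasing.) So ``the same mechanism as Lemma \ref{2seg}'' does not transfer without a new idea, precisely because Lemma \ref{2seg} concerns an equilibrium measure (a single positive measure on both branches), whereas here you are forced into a signed decomposition.

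The missing idea is to use the Kelvin transform $K_0(t)=1/t$ \emph{centered at the point mass itself} (after translating $\delta_t$ to $\delta_0$, which makes $K$ a union of two intervals $[a,b]\cup[c,d]$ with $a<b<0<c<d$). Via Lemma \ref{Bal-eq}, $Bal(\delta_0,K)$ is then a Jacobian factor times the \emph{unweighted equilibrium measure of the two-interval set} $K_0(K)$ evaluated at $1/t$. This turns the two-interval balayage problem into a two-interval equilibrium problem, and now Lemma \ref{2seg} applies directly and gives the dominant monotonicity. The paper splits the quotient $Bal(\delta_0,K)'/\om'_{[a,d]}$ into the product of $\om_{K_0(K)}'/\om'_{[1/b,1/a]}$ (monotone by Lemma \ref{2seg}, with an orientation flip because $t\mapsto 1/t$ is decreasing on $[a,b]$) and an explicit Jacobian-weight factor that collapses, using $s-2+\alpha=-1$, to $\frac{(ab)^{\alpha/2}}{|t|}\bigl(\tfrac{d-t}{b-t}\bigr)^{\alpha/2}$, which is visibly monotone. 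You did cite Lemma \ref{Bal-eq}, but only as a device for producing explicit densities — the point is rather that it lets you invoke Lemma \ref{2seg} \emph{without} an explicit formula, avoiding both the non-monotone-weight obstruction of Route 1 and the subtraction obstruction of Route 2.
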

Note that the normalization by $\om_{I}'$ does not depend on $r$, in particular, it is independent from the set onto which the balayage is performed.
\begin{proof}
The proof is based on Lemma \ref{2seg}.
Again, it is sufficient to assume $\nu=\delta_{a}$, $a\in(-r,r)$, is a point mass. By a translation, we consider the balayage of $\delta_{0}$ on $K=[a,b]\cup[c,d]$ with $a<b<0<c<d$.
By Lemma \ref{Bal-eq}, with $K_{0}(t)=1/t$, the Kelvin transform of center $0$ and radius 1, and for $t\in[a,b]$, we have
$$
\frac{Bal(\delta_{0},K)'(t)}{\om'_{[a,d]}(t)}=C_{0,1}
\frac{\om_{K_{0}(K)}'(K_{0}(t))}{\om'_{[1/b,1/a]}(K_{a}(t))}
\frac{|t|^{s-2}\om'_{[1/b,1/a]}(K_{0}(t))}{\om'_{[a,d]}(t)}.
$$
In view of Lemma \ref{2seg}, the first quotient on the right hand-side is increasing as a function of $t$, and, up to some multiplicative constant, the second quotient equals
$$
|t|^{s-2}\left(\frac{(d-t)(t-a)}{(a^{-1}-t^{-1})(t^{-1}-b^{-1})}\right)^{\alpha/2}
=(ab)^{\alpha/2}|t|^{s-2+\alpha}\left(\frac{d-t}{b-t}\right)^{\alpha/2}
=\frac{(ab)^{\alpha/2}}{|t|}\left(\frac{d-t}{b-t}\right)^{\alpha/2},
$$
which is also increasing on $[a,b]$. The same reasoning can be performed on $[c,d]$.
\end{proof}
For a signed measure $\sigma$ with compact support on $\R$, and a nontrivial component $\sigma^{+}$, we define the map
\begin{equation}\label{def-J}
J(\sigma)=Bal(\sigma,\supp\sigma^{+})=\sigma^{+}-Bal(\sigma^{-},\supp\sigma^{+}).
\end{equation}
Note that, by 2) of Proposition \ref{bal-abs-cont}, if $\sigma$ is absolutely continuous with respect to the Lebesgue measure $dt$, then $J(\sigma)$ is also absolutely continuous. For ease of notation we simply denote by $\om_{R}$ the equilibrium measure $\om_{[-R,R]}$ of $[-R,R]$.
\begin{lemma}\label{lem-J}
Let $\sigma=\sigma^{+}-\sigma^{-}$ be a signed, absolutely continuous, measure 
on $K_{r}$, $0\leq r<1$, with $\sigma^{+}$ and $\sigma^{-}$ two nontrivial positive measures. Let $\sigma(x)=v(|x|)dx$ and assume 
$$(v/\om'_{1})(|x|)$$ 
is an increasing function of $|x|$, $x\in K_{r}$. Let $J(\sigma)=v^{*}(|x|)dx$. Then \\
1) $\exists\, r^{*}\in(r,1)$, $\supp(\sigma^{+})=K_{r^{*}},$ and $(v^{*}/\om'_{1})(|x|)$ is an 
increasing function of $|x|$ on $K_{r^{*}}$.\\
2) Assume moreover that the density of $\sigma$ is continuous. Denote by $c^{*}>0$ the smallest real number such that $(\sigma-c^{*}\om_{1})^{+}=0$. If $(\sigma-c\om_{1})^{+}$ is a nontrivial measure for any $c>0$, we set $c^{*}=+\infty$. Note that, for any $0\leq c<c^{*}$, $J(\sigma-c\om_{1})$ is well defined. The map 
$$F:c\in[0,c^{*})\mapsto m(J(\sigma-c\om_{1})),$$ 
is continuous, 
\begin{equation}\label{F-prop}
F(0)>m(\sigma),\qquad \limsup_{c\to c^{*}}F(c)\leq0.
\end{equation}
and there exists a $c_{\sigma}\in(0,c^{*})$ such that $F(c_{\sigma})=m(\sigma)$.
\end{lemma}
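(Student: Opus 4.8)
The plan is to treat parts 1) and 2) separately, with part 1) serving as the main structural input for part 2). For part 1), the hypothesis is that $\sigma = v(|x|)\,dx$ on $K_r$ with $(v/\omega_1')(|x|)$ strictly increasing, and $\sigma^+,\sigma^-$ both nontrivial. Since $\omega_1'>0$ on $(-1,1)$, the sign of $v$ equals the sign of $v/\omega_1'$, so the set $\{v>0\}$ is exactly where the increasing function $v/\omega_1'$ is positive; by monotonicity and the symmetry of $\sigma$ (it is a function of $|x|$), this forces $\{v>0\}$ to be of the form $\{r^* < |x| < 1\}$ for some threshold $r^* \in (r,1)$; the fact that $\sigma^+$ and $\sigma^-$ are both nontrivial gives $r^* \in (r,1)$ strictly (not $r$ or $1$). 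So $\supp(\sigma^+) = K_{r^*}$. Then $J(\sigma) = \sigma^+ - Bal(\sigma^-, K_{r^*})$, and $\sigma^-$ is a positive measure supported on $[-r^*,r^*] \subseteq (-1,1)$ (in fact on $(-r^*,r^*)$ since $v=0$ at $|x|=r^*$ by continuity of the quotient — one may need to argue the boundary is negligible, or simply note $\sigma^-$ is supported on the closed interval and balayage is unaffected). By Lemma~\ref{variat-bal} applied with the interval $[-r^*,r^*]$ in place of $[-r,r]$ and $K_{r^*}$ in place of $K_r$, the density $w$ of $Bal(\sigma^-, K_{r^*})$ satisfies that $(w/\omega_1')(|x|)$ is decreasing on $K_{r^*}$. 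Hence $v^*/\omega_1' = v/\omega_1' - w/\omega_1'$ on $K_{r^*}$ is the difference of an increasing function and a decreasing function, so it is increasing. This gives part 1).

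For part 2), fix $c \in [0,c^*)$. First I would check that $\sigma_c := \sigma - c\omega_1$ satisfies the hypotheses of part 1): its density is $v - c\,\omega_1'$, which is a function of $|x|$, and $(v - c\omega_1')/\omega_1' = v/\omega_1' - c$ is still strictly increasing; one must check $\sigma_c^+$ and $\sigma_c^-$ are both nontrivial — $\sigma_c^+$ is nontrivial precisely because $c < c^*$ by definition of $c^*$, and $\sigma_c^-$ is nontrivial because $v/\omega_1' - c$ takes negative values near $|x|=r$ (indeed $v/\omega_1'-c \le v/\omega_1' < $ its value at, say, an interior point where $\sigma^-$ is nontrivial — more carefully, since $\sigma^-$ nontrivial means $v<0$ somewhere and subtracting $c\ge 0$ only makes it more negative). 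So by part 1), $J(\sigma_c)$ is well-defined, proving the parenthetical remark. Then $F(c) = m(J(\sigma - c\omega_1))$ makes sense on $[0,c^*)$.

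The continuity of $F$ is the step I expect to require the most care. Writing $\sigma_c = v_c(|x|)\,dx$ with $v_c = v - c\omega_1'$, part 1) gives $\supp(\sigma_c^+) = K_{r^*(c)}$, and one should show $r^*(c)$ depends continuously (indeed monotonically — increasingly) on $c$: as $c$ increases, $v_c/\omega_1' = v/\omega_1' - c$ decreases pointwise, so the zero set moves outward, $r^*(c)$ increases. Continuity of $r^*(c)$ follows from strict monotonicity of the increasing function $v/\omega_1'$ together with continuity of $v$ (here is where the hypothesis ``density of $\sigma$ is continuous'' in part 2) enters). Then $F(c) = m(\sigma_c^+) - m(Bal(\sigma_c^-, K_{r^*(c)}))$; the first term is continuous in $c$ (integral of $v_c^+$, continuous integrand, over a set with continuously varying boundary), and for the second term one uses the mass formula \eqref{mass-bal} for balayage, $m(Bal(\sigma_c^-,K_{r^*(c)})) = \capa(K_{r^*(c)})\int U^{\omega_{K_{r^*(c)}}}\,d\sigma_c^-$, and argues that $\capa(K_{r^*(c)})$, the equilibrium potential, and $\sigma_c^-$ all vary continuously with $c$ (Lemma~\ref{lem-iba} on weak-* continuity of balayage under decreasing/varying compact sets may help package this, or one invokes continuity of capacity and of $\omega_K$ for the two-interval family).

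It remains to establish the boundary behavior \eqref{F-prop} and then conclude by the intermediate value theorem. For $F(0) > m(\sigma)$: we have $F(0) = m(J(\sigma)) = m(\sigma^+) - m(Bal(\sigma^-, \supp\sigma^+))$, and since balayage can only lose mass, $m(Bal(\sigma^-,\supp\sigma^+)) \le m(\sigma^-)$, so $F(0) \ge m(\sigma^+) - m(\sigma^-) = m(\sigma)$; the inequality is strict because $\sigma^-$ is a nontrivial positive measure supported in the open interval $(-r^*,r^*)$, disjoint from $K_{r^*}$, so Riesz balayage of positive mass onto a positive distance yields a strict mass loss $m(Bal(\sigma^-,K_{r^*})) < m(\sigma^-)$ (one can see this from \eqref{mass-bal} since $U^{\omega_{K_{r^*}}} < $ its equilibrium-constant value strictly on the disjoint set $\supp\sigma^-$, as the unweighted equilibrium potential is strictly below its constant off the support). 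For $\limsup_{c\to c^*} F(c) \le 0$: as $c \to c^*$, $\sigma_c^+ \to (\sigma - c^*\omega_1)^+ = 0$ so $m(\sigma_c^+) \to 0$; and $m(Bal(\sigma_c^-, K_{r^*(c)})) \ge 0$; hence $F(c) = m(\sigma_c^+) - m(Bal(\sigma_c^-,K_{r^*(c)})) \le m(\sigma_c^+) \to 0$. (If $c^* = +\infty$ the same argument works: $m(\sigma_c^+)\le \int_{K_{r^*(c)}} (v - c\omega_1')^+ \to 0$ as $c\to\infty$ since $\omega_1'$ is bounded below on any $K_{r_0}$, $r_0>0$, forcing $r^*(c)\to 1$ and the positive part to vanish.) Finally, since $F$ is continuous on $[0,c^*)$ with $F(0) > m(\sigma)$ and values approaching something $\le 0 \le m(\sigma)$ (note $m(\sigma) = m(\sigma^+)-m(\sigma^-)$; if $m(\sigma)\le 0$ then already $F$ crosses $m(\sigma)$ by the $\limsup$ bound, and if $m(\sigma)>0$ then $0 < m(\sigma) < F(0)$ and $\limsup F \le 0 < m(\sigma)$, so again IVT applies), there exists $c_\sigma \in (0,c^*)$ with $F(c_\sigma) = m(\sigma)$. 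The main obstacle, as noted, is rigorously pinning down the continuity of $F$, specifically the joint continuity of the balayage mass as both the swept measure $\sigma_c^-$ and the target set $K_{r^*(c)}$ vary with $c$; everything else is monotonicity bookkeeping plus the standard mass-loss properties of Riesz balayage.
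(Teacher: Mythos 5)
Your proof is correct and follows essentially the same route as the paper's: part 1) via Lemma~\ref{variat-bal}, and part 2) via the balayage mass formula \eqref{mass-bal}, the bounds \eqref{F-prop}, and the intermediate value theorem. One small tip on the step you flag as hardest: the paper obtains continuity of $F$ more cleanly by using the other form of \eqref{mass-bal}, writing $F(c)=\capa(K_{t_c})\int U^{\sigma-c\omega_1}\,d\omega_{K_{t_c}}$ in a single formula, so that continuity follows from the weak-* continuity of $\omega_{K_{t_c}}$ and the fixed continuous integrand $U^{\sigma}-cU^{\omega_1}$, avoiding the joint variation of $U^{\omega_K}$ and $\sigma_c^-$ that your version of the formula requires.
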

\begin{proof}
1) Since $\sigma$ has a nonzero positive part $\sigma^{+}$ and its density $v$, divided by the positive function $\om'_{1}$, is increasing with $|x|$, we must have $\supp\sigma^{+}=K_{r^{*}}$ with $r\leq r^{*}<1$ ($r^{*}\neq 1$ since $\sigma$ is absolutely continuous with respect to Lebesgue measure). Moreover, on $K_{r^{*}}$, we have $v^{*}=v-(\hat{\sigma_{-}})'$ where $(v/\om_{1}')(|x|)$ is an increasing function by assumption, and $((\hat{\sigma_{-}})'/\om'_{1})(|x|)$ is a decreasing function by Lemma \ref{variat-bal}. \\
2) The map $c\mapsto F(c)$ is continuous : the density of $\sigma-c\om_{1}$ vanishes at $t\in K_{r}$ when $(\sigma'/\om_{1}')(t)=(\sigma'/\om_{1}')(|t|)=c$, where $(\sigma'/\om_{1}')(|t|)$ is continuous and increasing. Thus, the previous equation has a unique positive root $r<t_{c}<1$ which depends continuously on $c$, with
$$
\supp(\sigma-c\om_{1})^{-}=K_{r}\setminus K_{t_{c}},\qquad\supp(\sigma-c\om_{1})^{+}=K_{t_{c}}.
$$
Thus,
$$
m((\sigma-c\om_{1})^{+})=\int_{K_{t_{c}}}d(\sigma-c\om_{1})
=\sigma(K_{t_{c}})-c\om_{1}(K_{t_{c}}),
$$
and
$$
m(Bal((\sigma-c\om_{1})^{-},K_{t_{c}}))=\capa(K_{t_{c}})\int U^{\sigma-c\om_{1}}d\om_{K_{t_{c}}},
$$
are continuous functions of $c$, where we use the fact that $\om_{K_{t_{c}}}$ is weak-* continuous with respect to $c$, and the potentials $U^{\sigma}$ and $U^{\om_{1}}$ are continuous. It follows that $c\mapsto F(c)$ is also continuous.

Next, we have $F(0)=m(J(\sigma))>m(\sigma)$ because $\sigma^{-}$ is a nontrivial measure. Moreover,
$$\limsup_{c\to c^{*}}F(c)\leq\limsup_{c\to c^{*}}m((\sigma-c\om_{1})^{+})=0.
$$
Finally, the last statement is just a consequence of (\ref{F-prop}) and the intermediate value theorem applied to the  continuous function $F$.
\end{proof}
Next we define a procedure that maps a signed measure $\sigma$ supported on $I$ into a positive 
measure, with support $K_{r^{*}}$, $r^{*}\in(r,1)$, with the property that its potential coincides with that of $\sigma$ on $K_{r^{*}}$, up to an additive constant.
\begin{proposition}\label{IBA}
Let $\sigma=\sigma^{+}-\sigma^{-}$ be a signed, absolutely continuous, radial measure 
on $K_{r}$, $0\leq r<1$, of positive total mass $m(\sigma)>0$, of finite energy, with $\sigma^{+}$ and $\sigma^{-}$, two nontrivial positive measures. Let $\sigma(x)=v(|x|)dx$ with $v$ continuous and
$$(v/\om'_{1})(|x|),$$ 
an increasing function of $|x|$, $x\in K_{r}$. Then 
there exists $r^{*}\in(r,1)$ and a positive measure, denoted $T(\sigma)$, with support $K_{r^{*}}$, 
such that
$$
U^{T(\sigma)}(x)=U^{\sigma}(x)+C,\quad x\in K_{r^{*}},\qquad\text{and }\quad m(T(\sigma))=m(\sigma),
$$
where $C$ is some constant.
\end{proposition}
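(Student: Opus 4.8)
The plan is to define $T(\sigma)$ as the limit of the iterated balayage sequence $\sigma_0 = \sigma$, $\sigma_{n+1} = J(\sigma_n - c_{\sigma_n}\om_1)$, where $J$ is the map from \eqref{def-J} and $c_{\sigma_n}$ is the constant furnished by part 2) of Lemma \ref{lem-J} that restores the total mass. First I would check that the iteration is well-defined: starting from $\sigma_0 = \sigma$, which is radial, absolutely continuous with continuous density $v$, of total mass $m(\sigma) > 0$, with $(v/\om_1')(|x|)$ increasing, and with nontrivial positive and negative parts, one applies part 1) of Lemma \ref{lem-J} to see that $\supp\sigma_0^+ = K_{r_0^*}$ for some $r_0^* \in (r,1)$ and that $(v_0^*/\om_1')(|x|)$ is again increasing on $K_{r_0^*}$; subtracting $c_{\sigma_0}\om_1$ and applying $J$ again, one gets $\sigma_1$ of the same structural type (radial, absolutely continuous, continuous density, total mass $m(\sigma)$ by the defining property of $c_{\sigma_0}$, increasing normalized density, and a nontrivial negative part unless the process has already terminated). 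So by induction each $\sigma_n$ is a signed radial measure on some $K_{r_n}$ with $m(\sigma_n) = m(\sigma)$, increasing normalized density, and $\supp\sigma_n^+ = K_{r_n^*}$ with an increasing sequence $r_n \leq r_{n+1}$.

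Next I would establish monotonicity and convergence. The supports $K_{r_n}$ are decreasing (as $r_n$ increases), and $r_n \to r_\infty$ for some $r_\infty \leq 1$; one must show $r_\infty < 1$, which should follow because the masses stay fixed at $m(\sigma) > 0$ while the density blows up only like $(1-|x|^2)^{-\alpha/2}$, so the measures cannot escape to the endpoints. The potentials satisfy $U^{\sigma_{n+1}}(x) = U^{\sigma_n}(x) - c_{\sigma_n} U^{\om_1}(x) + (\text{const})$ on $K_{r_{n+1}}$, and since balayage does not increase potentials, the sequence $U^{\sigma_n}$ is monotone (after accounting for the equilibrium-constant shifts) on the nested supports. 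Using that the total masses are bounded, pass to a weak-$*$ convergent subsequence $\sigma_{n_k} \to T(\sigma)$; I would argue, as in Lemma \ref{lem-iba} and using the lower envelope theorem plus weak-$*$ continuity of equilibrium measures on $K_{r_{n_k}^*}$, that the limit $T(\sigma)$ is a positive measure (the negative parts $\sigma_n^-$ are swept out and their masses $\to 0$ by telescoping the mass identity $m(\sigma_{n+1}) - m((\sigma_n - c_{\sigma_n}\om_1)^+)$ together with the mass-loss formula \eqref{mass-bal}), supported on $K_{r_\infty}$, with $m(T(\sigma)) = m(\sigma)$, and with $U^{T(\sigma)}(x) = U^{\sigma}(x) + C$ on $K_{r_\infty}$ for the constant $C = -(\sum_n c_{\sigma_n}) I_{[-1,1]} + (\text{accumulated shifts})$, provided that sum converges — which it does because the $c_{\sigma_n}$ control the mass decrements and $\sum_n (\text{mass lost at step } n)$ is bounded by $m(\sigma)$.

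The main obstacle I expect is twofold. First, one must verify that $T(\sigma)$ genuinely is positive and not merely a signed weak-$*$ limit: the structural invariant (increasing normalized density, hence positivity of $\sigma_n$ on the complement of a shrinking middle interval) must be shown to survive the limit, and one needs the negative masses $m(\sigma_n^-)$ to tend to $0$ rather than to some positive constant — this requires a genuine quantitative argument, plausibly that $F(c_{\sigma_n}) = m(\sigma)$ forces $c_{\sigma_n}$ to stay bounded below away from $0$ as long as $r_n$ has not yet stabilized, so the iteration makes definite progress each step. Second, one must control the additive constants: showing $\sum c_{\sigma_n} < \infty$ so that the potentials converge to $U^\sigma + C$ with a finite $C$, rather than the constants running off to infinity. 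Both points are handled by carefully combining the mass identity, the balayage mass-loss formula \eqref{mass-bal}, and the monotone convergence $r_n \uparrow r_\infty < 1$; the continuity and intermediate-value statements in Lemma \ref{lem-J} part 2) are the technical engine that keeps each $\sigma_n$ well-defined and mass-preserving throughout.
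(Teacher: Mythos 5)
Your overall scheme --- iterate the map $J$ with the mass-restoring corrections $c_{\sigma_k}\om_1$ and pass to a weak-$*$ limit --- is exactly the paper's, and the structural induction via Lemma \ref{lem-J} is the right engine. However, the two steps you flag as ``the main obstacle'' are precisely where your proposal leaves genuine gaps, and the arguments you sketch for them do not close.

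First, your argument that $\sum_n c_{\sigma_n}<\infty$ is circular. You say the $c_{\sigma_n}$ ``control the mass decrements'' and that the total mass lost is bounded by $m(\sigma)$; but by the very choice of $c_{\sigma_n}$, the mass gained by sweeping the negative part of $\sigma_n-c_{\sigma_n}\om_1$ is exactly $c_{\sigma_n}$ (it offsets the mass subtracted via $c_{\sigma_n}\om_1$), so ``total mass lost'' \emph{is} the quantity you are trying to bound. The paper supplies the missing independent input: since the iteration telescopes to $\sigma_k = Bal\bigl(\sigma - \sum_{j<k}c_{\sigma_j}\om_1,\ K_{r_k}\bigr)$, the balayage mass formula (\ref{mass-bal}) gives
\[
m(\sigma)=m(\sigma_k) = \capa(K_{r_k})\Bigl(\int U^{\sigma}\,d\om_{K_{r_k}} - \sum_{j<k}c_{\sigma_j}\int U^{\om_1}\,d\om_{K_{r_k}}\Bigr)
\leq \capa(K_{r_k})\Bigl(\sup_I U^{\sigma} - \capa(I)\sum_{j<k}c_{\sigma_j}\Bigr),
\]
so divergence of the series would force $m(\sigma)<0$, a contradiction.

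Second, your claim that $r_\infty<1$ (``masses stay fixed \dots density blows up only like $(1-|x|^2)^{-\alpha/2}$\dots the measures cannot escape to the endpoints'') is a heuristic, not an argument: the density of $\sigma_k$ is not a priori bounded by that of $\om_1$, and the weak-$*$ limit could still collapse onto $\{\pm 1\}$. The paper instead defines $\sigma^{*} = Bal(\sigma - c^{*}\om_1, K_{r^{*}})$ with $c^{*}=\sum_j c_{\sigma_j}$ and checks $I(\sigma^{*}) \le I(\sigma^{+}) + I(\sigma^{-}) + I(c^{*}\om_1) < \infty$; since a measure of positive total mass supported on the two-point set $\{\pm1\}$ has infinite Riesz energy, this forces $r^{*}<1$.

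Finally, the limit passage does not require showing $m(\sigma_n^-)\to 0$, as you anticipate. The paper observes that $\supp\sigma_k^-$ sits in the inner gap $K_{r_k}\setminus K_{r^{*}}$, so each restriction $\sigma_k|_{K_{r^{*}}}$ is already a positive measure; weak-$*$ convergence $\sigma_k\to\sigma^{*}$ (Lemma \ref{lem-iba}) combined with \cite[Theorem 0.5$'$]{Land} then identifies $\sigma^{*}$ as the positive limit on $K_{r^{*}}$. Your instinct that the structural invariant must survive the limit is correct; the mechanism is this support inclusion, not a quantitative decay of the negative masses.
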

The proof is based on building a sequence of signed measures $\sigma_{k}$, starting with $\sigma$, by performing iterative balayages of their negative components onto their positive components, as defined by the map $J$ in (\ref{def-J}). This idea was already used in \cite[Theorem 2]{KD} for the logarithmic kernel. Here, we also need to cope with the mass gain induced by the balayage of negative measures, which is one of the specifities of the Riesz theory. Hence, at each step of the procedure, we modify $\sigma_{k}$ in a correct way before applying the map $J$.
\begin{proof}
The sequence of measures $\sigma_{k}$, $k\geq0$, is built recursively, in the following way
$$
\sigma_{0}=\sigma,\quad\sigma_{k+1}=J(\sigma_{k}-c_{\sigma_{k}}\om_{1}),\qquad k\geq0,
$$
where $c_{\sigma_{k}}$ is given by 2) of Lemma \ref{lem-J}. Hence, each measure $\sigma_{k}$, $k\geq0$, has mass $m(\sigma)$.

From the continuity of $v$
and 2) of Proposition \ref{bal-abs-cont},
it follows that $\sigma_{k}=v_{k}(t)dt$ is also absolutely continuous, radial, with a continuous density $v_{k}$. 
Note that, for each $k$, $\sigma_{k+1}^{-}$ is a nontrivial measure because the density of 
$(\sigma_{k}-c_{\sigma_{k}}\om_{1})^{+}$ vanishes at $r_{k+1}$ while the density of the balayage of $(\sigma_{k}-c_{\sigma_{k}}\om_{1})^{-}$ tends to $-\infty$ on the right of $r_{k+1}$. The measure $\sigma_{k+1}^{+}$ is nontrivial as well because $m(\sigma_{k})=m(\sigma)>0$. Thus, we can use Lemma \ref{lem-J} repeatedly, where we also observe that $(\sigma_{k}-c_{\sigma_{k}}\om_{1})'/\om_{1}'=\sigma_{k}'/\om_{1}'-c_{\sigma_{k}}$ is increasing. Consequently, we obtain a non-decreasing sequence 
$r=r_{0}< r_{1}< r_{2}<\cdots\leq1$ such that 
$$\supp\sigma_{k}=K_{r_{k}},\quad
\sigma_{k}\leq\sigma_{k-1}\text{ on }K_{r_{k}},\quad
m(\sigma_{k})=m(\sigma_{k-1})=m(\sigma),
$$
and $(v_{k}/\om_{1}')(|t|)\text{ increases on }K_{r_{k}}$. 

Next, from the definition of the sequence $\{\sigma_{k}\}_{k}$, it follows that
$$
\sigma_{k}=Bal(\sigma-\sum_{j=0}^{k-1}c_{\sigma_{j}}\om_{1},K_{r_{k}}).
$$
Thus, applying (\ref{mass-bal}),
\begin{equation}\label{eq-for-c}
m(\sigma_{k})=\capa(K_{r_{k}})\left(\int U^{\sigma}d\om_{K_{r_{k}}}-
\sum_{j=0}^{k-1}c_{\sigma_{j}}\int U^{\om_{1}}d\om_{K_{r_{k}}}\right),
\end{equation}
which implies, together with the continuity of $U^{\sigma}$, 
that 
$$\frac{m(\sigma_{k})}{\capa(K_{r_{k}})}\leq\sup_{I} U^{\sigma}-\capa(I)\sum_{j=0}^{k-1}c_{\sigma_{j}},
$$
showing that the series $\sum_{j=0}^{\infty} c_{\sigma_{j}}$ converges, since, otherwise, we would have, for $k$ large, $m(\sigma_{k})=m(\sigma)<0$, a contradiction. 
Let
\begin{equation}\label{3-def}
r^{*}=\lim_{k}r_{k}\leq1,\qquad c^{*}=\sum_{j=0}^{\infty} c_{\sigma_{j}}<\infty,\qquad\sigma^{*}=Bal(\sigma-c^{*}\om_{1},K_{r^{*}}).
\end{equation}
Note that 
\begin{align*}
I(\sigma^{*}) & \leq I(Bal(\sigma^{-},K_{r^{*}}))+I(Bal(\sigma^{+},K_{r^{*}}))+I(Bal(c^{*}\om_{1},K_{r^{*}})) \\[5pt]
& \leq I(\sigma^{-})+I(\sigma^{+})+I(c^{*}\om_{1})<\infty,
\end{align*}
and thus $r^{*}<1$.
From Lemma \ref{lem-iba} we get that
$$
\sigma_{k}\to\sigma^{*}\quad\text{weak-*, \quad as $k\to\infty$}.
$$
Since the restrictions of $\sigma_{k}$ to $K_{r^{*}}$ are positive measures for all $k$, and 
$\sigma^{*}$ does not charge the endpoints of $K_{r^{*}}$, \cite[Theorem 0.5']{Land} applies, showing that
$$
\sigma^{*}
=\lim_{k}\sigma_{k}|_{K_{r^{*}}},
$$
which is a positive measure. It follows from the property of balayage that 
$$U^{\sigma^{*}}(x)=U^{\sigma}(x)-c^{*}W(I),\quad x\in K_{r^{*}},\qquad
m(\sigma^{*})=\lim_{k}m(\sigma_{k})=m(\sigma).
$$ 
Thus we can set $T(\sigma)=\sigma^{*}$, which satisfies the stated properties. 
\end{proof}
\begin{remark}\label{Rem-v*}
Let $v^{*}$ on $K_{r^{*}}$ be the pointwise limit of the decreasing sequence of positive functions $v_{k}$. Then, by the Lebesgue dominated convergence theorem, $\sigma_{k}\to v^{*}(t)dt$, weak-*, and thus $T(\sigma)=v^{*}(t)dt$. In particular, we have :\\
(i) the quotient $v^{*}/\om_{1}'$ is an increasing function of the radius $r\in[r^{*},1)$,\\
(ii) the function $v^{*}$ is continuous on $(r^{*},1)$, which follows from the continuity of $v$, the expression of $T(\sigma)$ as a balayage (last equality in (\ref{3-def})), and 2) of Proposition \ref{bal-abs-cont}.
\end{remark}
We aim at applying Proposition \ref{IBA} to the case where the initial measure $\sigma$ is a signed equilibrium measure for an external field $Q$. For that, we need a final result which improves Lemma \ref{inclusion}.
\begin{lemma}\label{Lem-KD-impr}
Let $\sigma=\eta_{Q,K_{r}}$, $0\leq r<1$, be the signed equilibrium measure for an external field $Q$ on $K_{r}$, satisfying the assumptions in Lemma \ref{lem-J}. Then
\begin{equation}\label{KD-impr}
\om_{Q,K_{r}}\leq(\sigma-c_{\sigma}\om_{I})^{+},
\end{equation}
where $c_{\sigma}$ is the constant from item 2) of Lemma \ref{lem-J}.
\end{lemma}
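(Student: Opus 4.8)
The plan is to reduce the statement to the support inclusion $S_{Q,K_r}\subseteq K_{r_1}$, where $K_{r_1}:=\supp(\sigma-c_\sigma\om_I)^+$, and then to prove that inclusion by contradiction. Write $\om:=\om_{Q,K_r}$. By Lemma \ref{inclusion}(i), $\om\le\sigma^+$, hence $S_{Q,K_r}\subseteq\supp\sigma^+=:K_\rho$; since $\sigma$ satisfies the hypotheses of Lemma \ref{lem-J} we have $\rho>r$, and, because $c_\sigma>0$ and $\sigma'/\om_I'$ is increasing, $r_1>\rho$. Set $\sigma_1:=J(\sigma-c_\sigma\om_I)=Bal(\sigma-c_\sigma\om_I,K_{r_1})=(\sigma-c_\sigma\om_I)^+-Bal((\sigma-c_\sigma\om_I)^-,K_{r_1})$. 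By Lemma \ref{lem-J}(2), $m(\sigma_1)=m(\sigma)=1$; moreover, since $K_{r_1}\subseteq K_r\cap[-1,1]$, the balayage property together with $U^\sigma+Q=C_{Q,K_r}$ q.e.\ on $K_r$ and $U^{\om_I}=W(I)$ q.e.\ on $[-1,1]$ gives $U^{\sigma_1}+Q=C_{Q,K_r}-c_\sigma W(I)$ q.e.\ on $K_{r_1}$, so $\sigma_1=\eta_{Q,K_{r_1}}$. As $Bal((\sigma-c_\sigma\om_I)^-,K_{r_1})\ge 0$, we have $\sigma_1^+\le(\sigma-c_\sigma\om_I)^+$. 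Consequently, once $S_{Q,K_r}\subseteq K_{r_1}$ is known, $\om=\om_{Q,K_{r_1}}$, and Lemma \ref{inclusion}(i) applied on $K_{r_1}$ yields $\om\le\eta_{Q,K_{r_1}}^+=\sigma_1^+\le(\sigma-c_\sigma\om_I)^+$, which is the claim.

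It remains to prove $S_{Q,K_r}\subseteq K_{r_1}$. Assume not. Since $Q$ is radial, $S_{Q,K_r}$ is symmetric and contained in $K_\rho$, so $a:=\min\{|x|:x\in S_{Q,K_r}\}$ is attained with $\rho\le a<r_1$; in particular $\pm a\in S_{Q,K_r}\subseteq K_a$ and $\om_{Q,K_r}=\om_{Q,K_a}$. By Lemma \ref{inclusion}(i) on $K_a$, $\pm a\in S_{Q,K_a}\subseteq\supp\eta_{Q,K_a}^+$. Now $\eta_{Q,K_a}$ is a combination of $Bal(\delta_y,K_a)$ and $\om_{K_a}$, so by the $K_a$--analogue of Proposition \ref{bal-abs-cont}(3) its density is comparable near $\pm a$ to $(|x|^2-a^2)^{-\alpha/2}$, whence the ratio $\eta_{Q,K_a}'/\om_{K_a}'$ extends continuously to $[a,1]$; moreover, by the argument of Lemma \ref{lem-quo-incr} applied on each component of $K_a$, this ratio is monotone on $[a,1]$ (increasing) and on $[-1,-a]$ (decreasing). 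Since $\pm a\in\supp\eta_{Q,K_a}^+$, the limit of $\eta_{Q,K_a}'/\om_{K_a}'$ at $|x|=a$ must be $\ge 0$ (otherwise $\eta_{Q,K_a}'<0$ near $\pm a$, so $\pm a\notin\supp\eta_{Q,K_a}^+$); monotonicity then forces $\eta_{Q,K_a}'\ge 0$ on $K_a$, i.e.\ $\eta_{Q,K_a}$ is a positive measure. As $S_{Q,K_r}=S_{Q,K_a}\subseteq K_a$, Lemma \ref{inclusion}(ii) gives $\om_{Q,K_r}=\eta_{Q,K_a}$, a positive measure with full support $S_{Q,K_r}=K_a$, where $K_{r_1}\subsetneq K_a$ since $a<r_1$.

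The contradiction now comes from Proposition \ref{IBA}. Since $K_{r_1}\subseteq K_a=S_{Q,K_r}$, Frostman's equality gives $U^{\om_{Q,K_r}}+Q=F_{Q,K_r}$ q.e.\ on $K_{r_1}$, so $Bal(\om_{Q,K_r},K_{r_1})+(1-m(Bal(\om_{Q,K_r},K_{r_1})))\,\om_{K_{r_1}}$ has constant potential-plus-field on $K_{r_1}$ and total mass $1$, hence by uniqueness equals $\eta_{Q,K_{r_1}}=\sigma_1$. Because $\om_{Q,K_r}$ is positive with support $K_a$ and thus charges the nonempty set $K_a\setminus K_{r_1}$, the balayage $Bal(\om_{Q,K_r},K_{r_1})$ is a positive measure and, by the mass formula \eqref{mass-bal} (strict mass loss for $0<s<1$), $m(Bal(\om_{Q,K_r},K_{r_1}))<m(\om_{Q,K_r})=1$. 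Therefore $\sigma_1=Bal(\om_{Q,K_r},K_{r_1})+(\text{positive constant})\,\om_{K_{r_1}}$ is a positive measure with full support $K_{r_1}$; in particular $\sigma_1^-=0$. This contradicts the fact established in the proof of Proposition \ref{IBA} (the step showing each $\sigma_{k+1}^-$ is nontrivial): the density of $(\sigma-c_\sigma\om_I)^+$ vanishes at $\pm r_1$ while that of $Bal((\sigma-c_\sigma\om_I)^-,K_{r_1})$ tends to $+\infty$ there, so $\sigma_1'\to-\infty$ at $\pm r_1$ and $\sigma_1^-\ne 0$. Hence $S_{Q,K_r}\subseteq K_{r_1}$, and the lemma follows.

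The main obstacle is the middle paragraph, namely establishing that $\eta_{Q,K_a}$ is a positive measure; this rests on the two-interval analogue of Lemma \ref{lem-quo-incr} for $\eta_{Q,K_a}'/\om_{K_a}'$, which requires either an explicit handle on $Bal(\delta_y,K_a)$ or a combination of Lemma \ref{2seg} with a Kelvin transform as in the proof of Lemma \ref{variat-bal}. The remaining steps are bookkeeping with the balayage identities, Lemmas \ref{inclusion}, \ref{lem-J}, and the structure of the sequence $(\sigma_k)$ built in Proposition \ref{IBA}.
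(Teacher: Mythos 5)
Your approach is genuinely different from the paper's, and it contains a real gap that you have correctly identified but not filled. The paper proves \eqref{KD-impr} directly, without any support analysis: starting from the signed equilibrium identity \eqref{defsigned} and the Frostman inequalities \eqref{Frostman1}--\eqref{Frostman2}, it writes the inequality
$$U^{(\sigma-c_{\sigma}\om_{I})^{+}}(x) \le U^{(\sigma-c_{\sigma}\om_{I})^{-}+\om_{Q,K_{r}}}(x)+C-F_{Q}-c_{\sigma}W(I)\quad\text{q.e.\ on }K_{r},$$
with equality q.e.\ on $S_{Q,K_r}$. Sweeping out onto $K_{\tilde r}=\supp J(\sigma-c_\sigma\om_I)$ and integrating against $\om_{K_{\tilde r}}$, the fact that both measures have unit mass forces $C-F_{Q}-c_{\sigma}W(I)\ge0$; the conclusion then follows at once from the comparison theorem {\cite[Theorem 3.2]{DOSW}} applied with $\mu=\om_{Q,K_r}+(\sigma-c_\sigma\om_I)^-$ and $\nu=(\sigma-c_\sigma\om_I)^+$. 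No determination of $S_{Q,K_r}$ is needed.

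Your reduction in the first paragraph (if $S_{Q,K_r}\subseteq K_{r_1}$ then the claim follows from Lemma \ref{inclusion}(i) on $K_{r_1}$, using $\sigma_1=\eta_{Q,K_{r_1}}$ and $\sigma_1^+\le(\sigma-c_\sigma\om_I)^+$) is correct, and so is the final contradiction (the two representations of $\sigma_1$ are incompatible). The problem is the middle step: you need $\eta_{Q,K_a}\ge0$, and for that you invoke a two-interval analogue of Lemma \ref{lem-quo-incr}, i.e.\ monotonicity of $\eta_{Q,K_a}'/\om_{K_a}'$ on each component of $K_a$. This is nowhere proved in the paper. Lemma \ref{2seg} controls $\om_{K_a}'/\om_{[a,1]}'$ and Lemma \ref{variat-bal} controls balayages from $(-r,r)$ onto $K_r$, but neither directly yields monotonicity of $Bal(\delta_y,K_a)'/\om_{K_a}'$ for $y$ off the axis, and combining them with a Kelvin transform centered at $y\notin\R$ takes you out of the one-dimensional setting the lemmas were formulated in. Moreover, that step relies on the specific form $Q=\gamma U^{\delta_y}$, while Lemma \ref{Lem-KD-impr} is stated for an arbitrary external field $Q$ whose signed equilibrium measure satisfies the hypotheses of Lemma \ref{lem-J}; the paper's proof works at that level of generality, yours does not. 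Until the positivity of $\eta_{Q,K_a}$ is established, the argument is incomplete, and the paper's shorter, purely potential-theoretic route is the one that actually closes it.
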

Note that Lemma \ref{inclusion} only asserts that $\om_{Q,K_{r}}\leq\sigma^{+}$ which is weaker than (\ref{KD-impr}).
\begin{proof}
From \eqref{defsigned} and \eqref{Frostman1}--\eqref{Frostman2} we have that
\begin{align}\label{PAMS}
U^{(\sigma-c_{\sigma}\om_{I})^{+}}(x) & \leq U^{(\sigma-c_{\sigma}\om_{I})^{-} + \om_{Q,K_{r}}}(x) 
+ C-F_Q-c_{\sigma}W(I),\quad\text{q.e. on } K_{r},
\\\label{PAMS2}
U^{(\sigma-c_{\sigma}\om_{I})^{+}}(x)& = U^{(\sigma-c_{\sigma}\om_{I})^{-} + \om_{Q,K_{r}}}(x) 
+ C-F_Q-c_{\sigma}W(I),\quad\text{q.e. on }S_{{Q,K_{r}}},
\end{align}
where $C = C_{Q,K_{r}}$ is the equilibrium constant for the signed equilibrium measure $\sigma=\eta_{Q,K_{r}}$.
Let $K_{\tilde r}:=\supp J(\sigma-c_{\sigma}\om_{I})$. From the property of balayage, we deduce from (\ref{PAMS}) that
\begin{align}\label{PAMS3}
U^{J(\sigma-c_{\sigma}\om_{I})}(x) & \leq U^{\om_{Q,K_{r}}}(x) 
+ C-F_Q-c_{\sigma}W(I),\quad\text{q.e. on } K_{\tilde r},
\end{align}
Integrating (\ref{PAMS3}) with respect to $\om_{K_{\tilde r}}$ and applying Tonelli's theorem, we derive that
$$
m(J(\sigma-c_{\sigma}\om_{I}))W(K_{\tilde r})\leq m(\om_{Q,K_{r}})W(K_{\tilde r})
+ C-F_Q-c_{\sigma}W(I).
$$
Since $m(J(\sigma-c_{\sigma}\om_{I}))=m(\sigma)=1$ and $m(\om_{Q,K_{r}})=1$, we obtain that
$0\leq C-F_Q-c_{\sigma}W(I)$.
Thus, in view of (\ref{PAMS})-(\ref{PAMS2}), the assumptions in \cite[Theorem 3.2]{DOSW} with 
$$\mu=\om_{Q,K_{r}}+(\sigma-c_{\sigma}\om_{I})^{-},\qquad\nu=(\sigma-c_{\sigma}\om_{I})^{+},\qquad C=C-F_Q-c_{\sigma}W(I),$$ 
are satisfied, which shows that
$\om_{Q,K_{r}}+(\sigma-c_{\sigma}\om_{I})^{-}\leq(\sigma-c_{\sigma}\om_{I})^{+}$ on $S_{{Q,K_{r}}}$ and consequently $\om_{Q,K_{r}}\leq(\sigma-c_{\sigma}\om_{I})^{+}$, as was to be proved.
\end{proof}
We are now in a position to apply the previous results to get our main theorem.
\begin{theorem}\label{KD2}
Let $\eta_{Q,1}$ be the signed equilibrium measure on $I=[-1,1]$ corresponding to an external field $Q$. Assume that $\eta_{Q,1}(x)=v(|x|)dx$ is an absolutely continuous, radial measure, with a continuous density $v(|x|)$ such that $(v/\om'_{1})(|x|)$ increases with $|x|$, $x\in I$. Then it holds that :\\
(i) the support $S_{Q,I}$ of the equilibrium measure $\om_{Q,1}$ is the union of two symmetric intervals, namely $S_{Q,I}=K_{r^{*}}$, for some $0\leq r^{*}<1$.\\
(ii) As a function of the radius $r$, the density of $\om_{Q,1}=\eta_{Q,K_{r^{*}}}$ is a continuous,  increasing function of $r\in[r^{*},1]$ which grows, up to a positive multiplicative constant, at least like $(1-r)^{-\alpha/2}$ as $r\uparrow1$.
\\
(iii) Assume $Q$ and $v$ are of class $C^{1}$ on $I$. Then, as a function of $r$, the density of $\om_{Q,1}$ vanishes at $r^{*}$.
\end{theorem}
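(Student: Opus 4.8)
The plan is to run the iterated balayage procedure of Proposition~\ref{IBA} on the signed equilibrium measure $\sigma:=\eta_{Q,1}$ itself. Under the hypotheses of the theorem, $\sigma=\sigma^{+}-\sigma^{-}$ is an absolutely continuous radial measure on $K_{0}=I$ with continuous density $v$, positive mass $m(\sigma)=1$, finite energy, $\sigma^{+}$ nontrivial (since $m(\sigma)>0$), and, in the regime $\gamma>\gamma_{+}$ relevant to this section, $\sigma^{-}$ nontrivial (as $\eta_{Q,1}'(0)<0$ and $(v/\om_{1}')$ is increasing, $v<0$ near the origin), while $(v/\om_{1}')(|x|)$ is increasing. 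Thus Proposition~\ref{IBA} produces the sequence $\sigma_{k}$ with $\sigma_{0}=\sigma$, supports $\supp\sigma_{k}=K_{r_{k}}$, radii $0=r_{0}<r_{1}<r_{2}<\cdots\uparrow r^{*}<1$, all of mass $1$, together with the positive limit measure $T(\sigma)=\sigma^{*}$, supported on $K_{r^{*}}$, of mass $1$, satisfying $U^{\sigma^{*}}(x)=U^{\sigma}(x)+C$ for $x\in K_{r^{*}}$. The whole theorem reduces to proving that $\om_{Q,1}=T(\sigma)$.

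The first step is to recognize the intermediate measures as signed equilibrium measures. From the construction in the proof of Proposition~\ref{IBA}, $\sigma_{k}=Bal\big(\sigma-\sum_{j<k}c_{\sigma_{j}}\om_{1},K_{r_{k}}\big)$; since balayage preserves potentials q.e.\ on $K_{r_{k}}$, since $U^{\om_{1}}$ equals the Robin constant $W(I)$ q.e.\ on $I\supset K_{r_{k}}$, and since $U^{\eta_{Q,1}}$ equals a constant minus $Q$ q.e.\ on $I$ by \eqref{defsigned}, the function $U^{\sigma_{k}}(x)+Q(x)$ is constant q.e.\ on $K_{r_{k}}$, while $m(\sigma_{k})=1$; by uniqueness of the signed equilibrium measure (\cite[Lemma 23]{BDS2009}) this gives $\sigma_{k}=\eta_{Q,K_{r_{k}}}$, and the same computation applied to $\sigma^{*}$ gives $\sigma^{*}=\eta_{Q,K_{r^{*}}}$, which is moreover \emph{positive}. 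Each $\sigma_{k}$ inherits from the proof of Proposition~\ref{IBA} all the hypotheses of Lemma~\ref{lem-J} (absolutely continuous, radial, with continuous density $v_{k}$, nontrivial $\sigma_{k}^{\pm}$, and $(v_{k}/\om_{1}')$ increasing on $K_{r_{k}}$), so Lemma~\ref{Lem-KD-impr} applies to $\sigma_{k}=\eta_{Q,K_{r_{k}}}$ and yields $\om_{Q,K_{r_{k}}}\le(\sigma_{k}-c_{\sigma_{k}}\om_{1})^{+}$, a measure supported on $K_{r_{k+1}}=\supp(\sigma_{k}-c_{\sigma_{k}}\om_{1})^{+}$.

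Next I would run a descent on the supports: $S_{Q,I}\subseteq K_{r_{k}}$ for every $k$. This is trivial for $k=0$; and if $S_{Q,I}\subseteq K_{r_{k}}\subset I$, then $\om_{Q,1}$, being supported in $K_{r_{k}}$ and satisfying \eqref{Frostman1}--\eqref{Frostman2} on $I$ with constant $F_{Q}$, satisfies those same inequalities on the smaller compact set $K_{r_{k}}$, so by uniqueness of the solution of Frostman's conditions $\om_{Q,1}=\om_{Q,K_{r_{k}}}$, whence $S_{Q,I}=S_{Q,K_{r_{k}}}\subseteq K_{r_{k+1}}$ by the previous paragraph. Letting $k\to\infty$, $S_{Q,I}\subseteq\bigcap_{k}K_{r_{k}}=K_{r^{*}}$; one more application of the Frostman-uniqueness argument, with $r^{*}$ in place of $r_{k}$, gives $\om_{Q,1}=\om_{Q,K_{r^{*}}}$, and since $\eta_{Q,K_{r^{*}}}=\sigma^{*}$ is a positive measure and $S_{Q,I}\subseteq K_{r^{*}}\subset I$, Lemma~\ref{inclusion}(ii) finally gives $\om_{Q,1}=\eta_{Q,K_{r^{*}}}=\sigma^{*}=T(\sigma)$. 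Writing $T(\sigma)=v^{*}(|x|)\,dx$ (Remark~\ref{Rem-v*}), the density $v^{*}$ is nonnegative with $v^{*}/\om_{1}'$ increasing, so $\{v^{*}>0\}$ is of the form $\{r^{*}<|x|<1\}$ and $S_{Q,I}=\supp T(\sigma)=K_{r^{*}}$ with $0\le r^{*}<1$, which is~(i). For~(ii): $v^{*}=\om_{Q,1}'$ is continuous on $(r^{*},1)$ (Remark~\ref{Rem-v*}(ii)) and $v^{*}/\om_{1}'$ is positive and increasing there (Remark~\ref{Rem-v*}(i)); since $\om_{1}'(x)=c_{1}(1-|x|^{2})^{-\alpha/2}$ (by \eqref{equilball}) is positive and increasing in $|x|$, the product $v^{*}=(v^{*}/\om_{1}')\,\om_{1}'$ is increasing in $|x|$, and for $|x|$ close to $1$, fixing any $x_{0}\in(r^{*},1)$, $v^{*}(x)\ge\big((v^{*}/\om_{1}')(x_{0})\big)c_{1}(1-|x|^{2})^{-\alpha/2}$, i.e.\ it grows at least like $(1-|x|)^{-\alpha/2}$.

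Finally, for~(iii) assume in addition that $Q$ and $v$ are of class $C^{1}$ on $I$. Put $g:=U^{\om_{Q,1}}+Q-F_{Q}$, which by \eqref{Frostman2} vanishes on $S_{Q,I}=K_{r^{*}}$, hence $g\equiv0$ and $g'\equiv0$ on the interval $(r^{*},1)$. The part of $U^{\om_{Q,1}}$ coming from the portion of $K_{r^{*}}$ bounded away from $r^{*}$ is smooth near $x=r^{*}$, while for the portion on $[r^{*},\tfrac12]$ Lemma~\ref{Samko} shows that $\tfrac{d}{dx}U^{\om_{Q,1}}(x)$ acquires, as $x\downarrow r^{*}$, a term $v^{*}(r^{*})\,(x-r^{*})^{-s}$ plus quantities that remain bounded (it is exactly here that the $C^{1}$ hypotheses on $Q$ and $v$ are used, to guarantee that $v^{*}$ is absolutely continuous on a one-sided neighbourhood of $r^{*}$ so that Lemma~\ref{Samko} applies). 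Since $g'\equiv0$ on $(r^{*},1)$ and $Q'$ is bounded near $r^{*}$, the singular term must be absent, i.e.\ $v^{*}(r^{*})=0$; as $v^{*}$ is continuous at $r^{*}$ the density of $\om_{Q,1}$ vanishes there. The main obstacle is the descent step: Lemma~\ref{inclusion}(i) alone gives only the single inclusion $S_{Q,I}\subseteq S_{\sigma^{+}}$, and it is precisely the improved inclusion Lemma~\ref{Lem-KD-impr}, combined with the identification $\sigma_{k}=\eta_{Q,K_{r_{k}}}$ and the Frostman-uniqueness argument, that lets the induction close and push the support, step by step, down to $K_{r^{*}}$; a secondary delicate point is the regularity of the limiting density at the free endpoint $r^{*}$ needed for~(iii).
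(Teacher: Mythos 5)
Your proof is correct and follows essentially the same route as the paper: run the iterated-balayage scheme of Proposition~\ref{IBA} on $\sigma=\eta_{Q,I}$, close the descent $\supp\om_{Q,I}\subseteq K_{r_k}$ via Lemma~\ref{Lem-KD-impr}, then identify $\om_{Q,I}=T(\sigma)=\eta_{Q,K_{r^*}}$, with (ii) read off from Remark~\ref{Rem-v*} and (iii) from the singular term in Lemma~\ref{Samko}. You spell out two steps the paper leaves implicit (that $\sigma_k=\eta_{Q,K_{r_k}}$ so that Lemma~\ref{Lem-KD-impr} applies, and the Frostman-uniqueness induction giving $\om_{Q,I}=\om_{Q,K_{r_k}}$), which is a useful clarification; in (iii) the paper only needs the one-sided bound $\int_{r^*}^x \om_{Q,1}''(t)(x-t)^{-s}\,dt\ge 0$ rather than your stronger (and unneeded) claim that the remaining quantities are bounded.
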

\begin{remark}
The vanishing of the density of the weighted equilibrium measure at the inner (soft) endpoints agrees with the generic behavior of the density at the soft endpoints established 
in \cite{KuML} for logarithmic potentials.
\end{remark}
\begin{proof}
(i) It suffices to apply Proposition \ref{IBA} with the initial measure $\sigma=\eta_{Q,I}$. We then get that $T(\sigma)=\om_{Q,K_{r^{*}}}$. Moreover applying Lemma \ref{Lem-KD-impr} on each subset $K_{r_{k}}$ (where we keep the same notations as in the proof of Proposition \ref{IBA}), we derive that
$$
\forall k\geq0,\qquad\supp\om_{Q,I}\subset K_{r_{k}},
$$
hence $\supp\om_{Q,I}\subset K_{r^{*}}$. It follows that
$\om_{Q,I}=\om_{Q,K_{r^{*}}}$
and $S_{Q,I}=K_{r^{*}}$.
\\
(ii) The fact that the density of $\om_{Q,1}'$ is an increasing function of $r\in[r^{*},1]$ follows from item  (i) of Remark \ref{Rem-v*} and the fact that $\om_{1}'$ is also increasing with $r$. From item (ii)
 of Remark \ref{Rem-v*} we know that $\om_{Q,1}'$ is continuous on $(r^{*},1)$. Then, we can extend $\om_{Q,1}'$ to a continuous function on $[r^{*},1)$ by setting $\om_{Q,1}'(r^{*})=\lim_{r\downarrow r^{*}}\om_{Q,1}'(r)$, which exists.

The fact that $\om_{Q,1}'/\om_{1}'$ is increasing also implies the stated estimate on the growth of $\om_{Q,1}'$ near 1.
\\
(iii) From the Frostman inequalities (\ref{Frostman1})--(\ref{Frostman2}), one gets
$$
U^{\om_{Q,1}}(x)+Q(x)= F_{Q},\quad x\in K_{r^{*}}\quad\implies\quad
(U^{\om_{Q,1}})'(x)=-Q'(x),\quad x\in K_{r^{*}}.
$$
Since, by assumption, $Q'$ is a function continuous on $I$, it implies that the derivative 
\begin{equation}\label{der-pot}
(U^{\om_{Q,1}})'(x)=\frac{d}{dx}\int_{K_{r^{*}}}\frac{\om_{Q,1}'(t)dt}{|x-t|^{s}}
\end{equation}
is also bounded on $K_{r^{*}}$. For some $r^{*}<a<1$ and $r^{*}<x<a$, we split the previous expression into four pieces,
$$
\frac{d}{dx}\int_{-1}^{-r^{*}}\frac{\om_{Q,1}'(t)dt}{(x-t)^{s}}
+\frac{d}{dx}\int_{r^{*}}^{x}\frac{\om_{Q,1}'(t)dt}{(x-t)^{s}}
+\frac{d}{dx}\int_{x}^{a}\frac{\om_{Q,1}'(t)dt}{(t-x)^{s}}
+\frac{d}{dx}\int_{a}^{1}\frac{\om_{Q,1}'(t)dt}{(t-x)^{s}}.
$$
Observe that, as $r^{*}<x<a$, the first and fourth derivatives remain bounded. Moreover, applying Lemma \ref{Samko}, where we note that $\om_{Q,1}'$ is absolutely continuous on $[r^{*},a]$ (being continuous, increasing on $[r^{*},a]$, and of class $C^{1}$ on $(r^{*},a]$), we get
\begin{align*}
\frac{d}{dx}\int_{r^{*}}^{x}\frac{\om_{Q,1}'(t)dt}{(x-t)^{s}} & =\frac{\om_{Q,1}'(r^{*})}{(x-r^{*})^{s}}
+\int_{r^{*}}^{x}\frac{\om_{Q,1}''(t)dt}{(x-t)^{s}}\geq\frac{\om_{Q,1}'(r^{*})}{(x-r^{*})^{s}},
\\[5pt]
\frac{d}{dx}\int_{x}^{a}\frac{\om_{Q,1}'(t)dt}{(t-x)^{s}} & =-\frac{\om_{Q,1}'(a)}{(a-x)^{s}}
+\int_{x}^{a}\frac{\om_{Q,1}''(t)dt}{(t-x)^{s}}\geq-\frac{\om_{Q,1}'(a)}{(a-x)^{s}}.
\end{align*}
As $x\downarrow r^{*}$, the expression (\ref{der-pot}) remains bounded, while the lower bound in the second inequality has a finite limit. In view of the first inequality, we derive that $\om_{Q,1}'(r^{*})=0$.
\end{proof}
We now choose $Q$ to be the particular external field we are interested in, namely the one created by a positive point charge over $I$, that is $Q=\gamma U^{\delta_{y}}$. We apply Theorem \ref{KD2} to the corresponding signed equilibrium measure 
\begin{equation}\label{exp-eta}
\eta_{Q,I}=-\gamma Bal(\delta_{y},I)+(1+\gamma m_{1})\om_{I},
\end{equation}
recall (\ref{sgn-meas}), which is possible, since, by Lemma \ref{lem-quo-incr}, we know that $(\eta'_{Q,I}/\om'_{1})(|x|)$ increases as a function of $|x|$. We get
\begin{corollary}\label{supp-shell}
The following holds:\\
(i) The equilibrium measure $\om_{Q,I}$ on the segment $I$ in the external field (\ref{ext-field}) of a repulsive charge $\gamma>\gamma_{+}$ has a support $S_{Q,I}$ which is the union $K_{r^{*}}$ of two symmetric intervals, for some $0<r^{*}<1$. \\
(ii) The density of $\om_{Q,1}=\eta_{Q,K_{r^{*}}}$ behaves, up to a positive multiplicative constant, like $(1-|x|)^{-\alpha/2}$ as $|x|\uparrow1$, and vanishes at $|x|=r^{*}$.
\end{corollary}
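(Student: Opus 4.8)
The plan is to obtain Corollary~\ref{supp-shell} by specializing Theorem~\ref{KD2} to the external field $Q=\gamma U^{\delta_y}$ of a repulsive charge with $\gamma>\gamma_+$; the bulk of the work lies in checking the hypotheses of that theorem and then extracting the two strict inequalities $0<r^*<1$ and the two-sided endpoint estimate. First I would verify that the signed equilibrium measure $\eta_{Q,I}$ of \eqref{exp-eta} satisfies the assumptions of Theorem~\ref{KD2}. It is radial since the charge $y=(0,y_2)$ lies on the symmetry axis. It is absolutely continuous with density real analytic on $(-1,1)$: this follows from \eqref{equilball} for the $\om_I$-term and from Lemma~\ref{lem:balR} (equivalently Proposition~\ref{bal-abs-cont}) for the balayage term, the same formulas showing $\eta'_{Q,I}(x)\sim H(1)\,(1-|x|^2)^{-\alpha/2}$ as $|x|\to1$, with $H(1)$ the constant in \eqref{HR} taken at $R=1$. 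The decisive hypothesis, that $(\eta'_{Q,I}/\om'_1)(|x|)$ be increasing in $|x|$, is exactly Lemma~\ref{lem-quo-incr}. Granting these points, Theorem~\ref{KD2}(i) yields $S_{Q,I}=K_{r^*}$ for some $0\le r^*<1$ with $\om_{Q,I}=\om_{Q,K_{r^*}}$; Theorem~\ref{KD2}(ii) yields that, as a function of the radius $r$, the density of $\om_{Q,I}$ is continuous and increasing on $[r^*,1]$ and grows at least like $(1-r)^{-\alpha/2}$ as $r\uparrow1$; and Theorem~\ref{KD2}(iii) yields that this density vanishes at $r^*$ --- part (iii) applies because $Q(x)=\gamma(x^2+y_2^2)^{-s/2}$ is real analytic on $I$ and, by Proposition~\ref{bal-abs-cont}, $\eta'_{Q,I}$ and $\om'_{Q,I}$ are real analytic on the interior of $I$, which is the only regularity used in its proof near the interior point $r^*$.

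Next I would upgrade $r^*\ge0$ to $r^*>0$, which is precisely where the threshold $\gamma>\gamma_+$ is used. By the definition of $\gamma_+$ in Theorem~\ref{Thm-pos-small} and the remark following it, $\gamma>\gamma_+$ forces $\eta'_{Q,I}(0)<0$, so by continuity of the density in the interior $\eta'_{Q,I}<0$ on a neighborhood of the origin; that neighborhood is then disjoint from $S_{\eta^+_{Q,I}}$, and since $\om_{Q,I}\le\eta^+_{Q,I}$ by Lemma~\ref{inclusion}(i), the origin together with a neighborhood of it is excluded from $S_{Q,I}=K_{r^*}$, forcing $r^*>0$. Together with $r^*<1$ this gives $0<r^*<1$ and proves part (i).

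For the endpoint behavior in part (ii), the lower bound of exact order $(1-|x|)^{-\alpha/2}$ near $\pm1$ is already contained in Theorem~\ref{KD2}(ii). For the matching upper bound I would use the iterated-balayage construction of Proposition~\ref{IBA}: the measures $\sigma_k=v_k(|x|)\,dx$ decrease on the common support $K_{r^*}$, so their weak-$*$ limit $\om_{Q,I}=T(\eta_{Q,I})=v^*(|x|)\,dx$ satisfies $\om'_{Q,I}=v^*\le v=\eta'_{Q,I}$ on $K_{r^*}$. Since $\eta'_{Q,I}(x)\sim H(1)\,(1-|x|^2)^{-\alpha/2}$ as $|x|\to1$ with $H(1)>0$ (positivity following from $m(\eta_{Q,I})=1>0$ and the monotonicity of $\eta'_{Q,I}/\om'_1$), we obtain $\om'_{Q,I}(x)\lesssim(1-|x|)^{-\alpha/2}$, hence the two-sided estimate; combined with the vanishing at $r^*$ from the previous paragraph, this establishes part (ii).

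There is no deep obstacle here, the corollary being an orchestration of Theorem~\ref{KD2}, Lemma~\ref{lem-quo-incr}, Lemma~\ref{inclusion} and Proposition~\ref{IBA}. The two points that require care are: reconciling the ``continuous density'' hypothesis of Theorem~\ref{KD2} with the genuine $(1-|x|^2)^{-\alpha/2}$ blow-up of $\eta'_{Q,I}$ at the ``hard'' endpoints $\pm1$, so that one works on the open interval with the standard integrable singularity there; and locating the inner endpoint $r^*$, whose mere existence is automatic from the iterative balayage but whose strict positivity must be read off from the sign of $\eta'_{Q,I}(0)$ under the assumption $\gamma>\gamma_+$.
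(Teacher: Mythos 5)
Your proof is correct and follows the same overall architecture as the paper: verify the hypotheses of Theorem~\ref{KD2} via Lemma~\ref{lem-quo-incr}, read off $S_{Q,I}=K_{r^*}$, the monotonicity, the lower endpoint estimate, and the vanishing at $r^*$ from Theorem~\ref{KD2}(i)--(iii), obtain $r^*>0$ from $\eta'_{Q,I}(0)<0$ under $\gamma>\gamma_+$, and then add a matching upper bound for the density near $\pm1$. The only place you diverge is in that last step. The paper gets the upper bound in one line from the chain
$\om_{Q,1}\leq(\eta_{Q,1})^{+}\leq(1+\gamma m_{1})\om_{I}$,
where the first inequality is Lemma~\ref{inclusion}(i) (i.e.\ \eqref{signeddomin}) and the second drops the nonpositive term $-\gamma\,Bal(\delta_y,I)$ from \eqref{exp-eta}, after which Theorem~\ref{Wal}(iii) gives the $(1-|x|)^{-\alpha/2}$ behavior of $\om'_I$. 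You instead route through the monotone decrease $v_k\downarrow v^*$ of the iterative-balayage densities in Proposition~\ref{IBA} to get $\om'_{Q,I}\leq\eta'_{Q,I}$, and then you must show $H(1)>0$ to control $\eta'_{Q,I}$ near $\pm1$; your argument for $H(1)>0$ (the quotient $\eta'_{Q,I}/\om'_1$ is strictly increasing, must be positive somewhere since $m(\eta_{Q,I})=1>0$, hence its limit $H(1)/c_1$ at $|x|=1$ is positive) is sound, but it is an extra step that the paper's comparison $\eta_{Q,1}^+\leq(1+\gamma m_1)\om_I$ avoids entirely. Both routes reach the same two-sided estimate; the paper's is more economical, yours delivers the sharper constant $H(1)$ for the upper bound.
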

\begin{proof}
We only need to prove that the growth of $\om_{Q,1}'$ is not larger than $(1-|x|)^{-\alpha/2}$ as $|x|\uparrow1$. But it follows from (\ref{signeddomin}) and (\ref{exp-eta}) that 
$\om_{Q,1}\leq(\eta_{Q,1})^{+}\leq(1+\gamma m_{1})\om_{I}$, and
$\om_{I}$ has the stated behavior as $|x|\uparrow1$, recall Theorem \ref{Wal}, (iii). 
\end{proof}
\begin{remark}\label{Rem-comp-rad}
How to compute the inner radius $r^{*}$ of the support ? Assume we are able to compute, for a general set 
$$K=[a_{1},a_{2}]\cup[a_{3},a_{4}],$$ 
the constants $M_{i}(K)$, $1\leq i\leq4$, such that
$$
\om_{K}'(x)\to\frac{M_{i}(K)}{|a_{i}-x|^{\alpha/2}},\quad\text{ as }|x|\to a_{i},~ 1\leq i\leq4.
$$
Then, by Lemma \ref{Bal-eq}, for $y\notin K$, we also know the constants $N_{i}(y,K)$ such that
$$
Bal(\delta_{y},K)'(x)\to\frac{N_{i}(y,K)}{|a_{i}-x|^{\alpha/2}},\quad\text{ as }|x|\to a_{i},~ 1\leq i\leq4,
$$
and, from the superposition principle, for a measure $\nu$ supported outside of $K$, we can also derive the constants $N_{i}(\nu,K)$ such that
$$
Bal(\nu,K)'(x)\to\frac{N_{i}(\nu,K)}{|a_{i}-x|^{\alpha/2}},\quad\text{ as }|x|\to a_{i},~ 1\leq i\leq4,
$$
(this would need more details to be justified).
Now, let us choose $K=K_{r^{*}}=[-1,-r^{*}]\cup[r^{*},1]$. Recall that $\om_{Q,1}=Bal(\eta_{Q,1}-c^{*}\om_{1},K_{r^{*}})$ and observe that (\ref{mass-bal}) gives the relation,
$$
1=\capa(K_{r^{*}})\left(C_{Q,I}-c^{*}W(I)\right),
$$
where $C_{Q,I}$ is the signed equilibrium constant in (\ref{defsigned}).
Then the fact that $(\sigma^{*})'(r^{*})$ is a finite number implies that $r^{*}$ must be a solution to the following equation with respect to~$r$,
$$
N_{2}(\eta_{Q,I}|_{[-r,r]},K_{r})=c^{*}N_{2}(\om_{1}|_{[-r,r]},K_{r}),
$$
where, by symmetry, $N_{2}$ can be replaced with $N_{3}$. Assuming moreover that the capacity of a set $K_{r}$ could be computed, then the above equation would allow one to determine $r^{*}$.
\end{remark}
\noindent
{\bf Funding.} The research of the Peter Dragnev is supported, in part, by the Lilly Endowment.

\obeylines
\texttt{P.~Dragnev (dragnevp@pfw.edu)
Department of Mathematical Sciences, Purdue University Fort Wayne, 
Ft.\ Wayne, IN 46805, USA.
\medskip
R.~Orive (rorive@ull.es)
Departmento de An\'{a}lisis Matem\'{a}tico, Universidad de La Laguna, and 
Instituto de Matem\'aticas y Aplicaciones de la ULL (IMAULL), 
38200 La Laguna (Tenerife), SPAIN.
\medskip
E.~B.~Saff (edward.b.saff@vanderbilt.edu)
Center for Constructive Approximation, Vanderbilt University, 
Department of Mathematics, Nashville, TN  37240, USA.
\medskip
F.~Wielonsky (franck.wielonsky@univ-amu.fr)
Laboratoire I2M, UMR CNRS 7373, Universit\'e Aix-Marseille, 
F-13453 Marseille Cedex 20, FRANCE.
}

\begin{thebibliography}{99}

\bibitem{BDT} D. Benko, P. Dragnev, V. Totik, Convexity of harmonic densities. Rev. Mat. Iberoam. 28 (2012), 947-960.

\bibitem{B}
M. Bilogliadov, \newblock Minimum Riesz energy problem on the hyperdisk.
\newblock J. Math. Phys. 59 (2018), 013301.

\bibitem{BLW} T. Bloom, N. Levenberg, F. Wielonsky, Logarithmic potential theory and large deviation. Comput. Methods Funct. Theory 15 (2015), 555-594.

\bibitem{BHS} S.V. Borodachov, D.P. Hardin and E.B. Saff, Discrete Energy on Rectifiable Sets, Springer Monographs in Mathematics, Springer, 2019.

\bibitem{BDS2009}
J. S. Brauchart, P. D. Dragnev and E. B. Saff, \newblock Riesz extremal measures on the sphere for axis-supported external fields.
\newblock {\em J. Math. Anal. Appl.} 356 (2009) 769--792.

\bibitem{BDS14}
J. S. Brauchart, P. D. Dragnev and E. B. Saff, Riesz external fields problems on the hypersphere and optimal point separation, Potential Anal. 41 (2014), 647-678.

\bibitem{CSW} D. Chafai, E. B. Saff, R. S. Womersley, 
\newblock Threshold condensation to singular support for a Riesz equilibrium problem, 
\newblock Anal. Math. Phys. 13 (2023), Paper Nr.19, 30 pp.

\bibitem{DK} S.B. Damelin, A.B.J. Kuijlaars, 
The support of the equilibrium measure in the presence of a monomial external field on $[-1,1]$, 
Trans. Amer. Math. Soc. 351 (1999), 4561-4584.

\bibitem{DKM} P. Deift, T. Kriecherbauer, K. T.-R. McLaughlin,
\newblock New results on the equilibrium measure for logarithmic potentials in the presence of an external field, J. Approx. Th. 95 (1998), 388--475.

\bibitem{DOSW} P. Dragnev, R. Orive, E.B. Saff and F. Wielonsky, Riesz energy problems with external fields and related theory,  Constr. Approx. 57 (2023), 1-43. 

\bibitem{Fu} B. Fuglede, On the theory of potentials in locally compact spaces, {\em Acta Math.} 103, (1960), 139-215.

\bibitem{KD}
A.B.J. Kuijlaars and P. Dragnev,
\newblock Equilibrium problems associated with fast decreasing polynomials.
\newblock {\em Proc. Am. Math. Soc.} 127, 1065--1074 (1999).

\bibitem{KuML}
A. B. J. Kuijlaars, K. T.-R. McLaughlin: Generic behavior of the density of states in random matrix theory and equilibrium problems in the presence of real analytic external fields, Commun. Pure Appl. Math. 53 (6) (2000), 736-785.

\bibitem{Land}
N.S. Landkof,
\newblock Foundations of Modern Potential Theory, Grundlehren der mathematischen
Wissenschaften 180.
\newblock {\em Springer-Verlag, New York-Heidelberg}, 1972.

\bibitem{abey}
A. L\'{o}pez Garc\'{i}a,
\newblock Greedy energy points with external fields.
\newblock {\em Contemporary Mathematics} 507 (2010), 189--207.

\bibitem{MOR} A. Mart\'inez Finkelshtein, R. Orive, E. A. Rakhmanov,
\newblock Phase transitions and equilibrium measures in random matrix models, \newblock Commun. Math. Phys. 333 (2015), 1109-–1173.

\bibitem{MFRa}
A. Mart\'inez Finkelshtein, E. A. Rakhmanov,
\newblock Critical measures, quadratic differentials and weak limits of zeros of Stieltjes polynomials,
\newblock Commun. Math. Phys. 302 (2011), 53--111.

\bibitem{OS}
R. Orive, J. S\'anchez Lara, Equilibrium measures in the presence of certain rational external fields, J. Math. Anal. Appl. 431 (2015), 1224-1252.

\bibitem{OSW}
R. Orive, J. S\'{a}nchez Lara and F. Wielonsky,
\newblock Equilibrium problems in weakly admissible external fields created by pointwise charges,
\newblock {\em J. Approx. Theory} 244 (2019), 71--100.

\bibitem{Rakh}
E. A. Rakhmanov,
\newblock Orthogonal polynomials and $S$--curves,
\newblock Recent advances in Orthogonal Polynomials, Special Functions and applications. Contemp. Math. 578 (2012), 195--239.

\bibitem{Rud} W. Rudin, Real and complex analysis. Third edition. McGraw-Hill Book Co., New York, 1987.

\bibitem{ST}
E.B. Saff and V.~Totik,
\newblock Logarithmic Potentials with External Fields, volume 316 of
  Grundlehren der Mathematischen Wissenschaften.
Springer-Verlag, Berlin, 1997.

\bibitem{SKM}
S.G. Samko, A.A. Kilbas, O.I. Marichev, Fractional integrals and derivatives. Theory and applications. 
Gordon and Breach Science Publishers, Yverdon, 1993.

\bibitem{W}
H. Wallin, Regularity properties of the equilibrium distribution
Ann. Inst. Fourier 15 (1965), 71-90
\end{thebibliography}
\end{document}